\newtheorem{theorem}{Theorem}
\newtheorem*{theorem*}{Theorem}
\newtheorem{proposition}{Proposition}
\newtheorem{corollary}{Corollary}
\newtheorem{lemma}{Lemma}
\theoremstyle{remark}
\newtheorem{remark}{Remark}
\theoremstyle{definition}
\newtheorem{definition}{Definition}
\newcommand{\E}{\mathcal{E}}
\newcommand{\Cm}{\mathcal{C}}
\newcommand{\K}{\mathcal{K}}
\newcommand{\B}{\mathcal{B}}
\newcommand{\T}{\mathcal{T}}
\newcommand{\Sha}{\mathord{\textit{ш}}}
\title{An Infinite, Converging, Sequence of\\ Brocard Porisms}
\author{Dan Reznik} 
\author{Ronaldo Garcia}
\date{September, 2020}
\begin{document}

\maketitle

\begin{abstract}
The Brocard porism is a 1d family of triangles inscribed in a circle and circumscribed about an ellipse. Remarkably, the Brocard angle is invariant and the Brocard points are stationary at the foci of the ellipse. In this paper we show that a certain derived triangle spawns off a second, smaller, Brocard porism so that repeating this calculation yields an infinite, converging sequence of porisms. We also show that this sequence is embedded in a continuous family of porisms.

\vskip .3cm
\noindent\textbf{Keywords} Poncelet, Brocard, Porism, Locus, Invariant, Envelope, Recurrence, Nesting, Convergence
\vskip .3cm
\noindent \textbf{MSC} {53A04 \and 51M04 \and 51N20}
\end{abstract}

\section{Introduction}
\label{sec:intro}
The Brocard points $\Omega_1,\Omega_2$, shown in Figure~\ref{fig:brocard-basic} are well-studied, unique points of concurrence in a triangle, introduced by August L. Crelle in 1816, given a construction by Karl F.A. Jacobi in 1825, and rediscovered by Henri Brocard in 1875 \cite[Brocard Points]{mw}. 

\begin{figure}
    \centering
    \includegraphics[width=.6\textwidth]{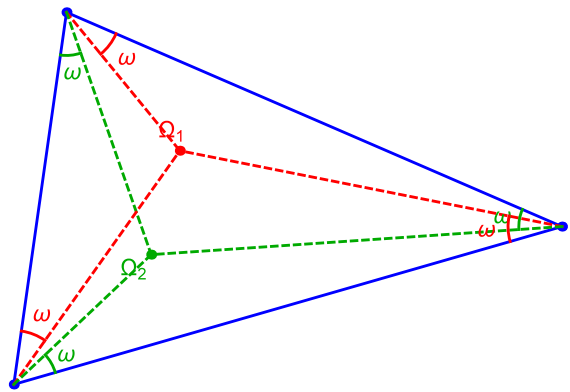}
    \caption{The Brocard Points $\Omega_1$ (resp. $\Omega_2$) are where sides of a triangle concur when rotated about each vertex by the Brocard angle $\omega$. When sides are traversed and rotated clockwise (resp. counterclockwise), one obtains $\Omega_1$ (resp. $\Omega_2$).}
    \label{fig:brocard-basic}
\end{figure}

The Brocard {\em porism}, Figure~\ref{fig:brocard-basic}, is a 1d family of Poncelet 3-periodics (triangles) inscribed in a circle and circumscribed about an ellipse $\E$ known as the {\em Brocard Inellipse} \cite{bradley2007-brocard}. Remarkably, the Brocard angle $\omega$ is invariant {\em and} the Brocard points $\Omega_1$ and $\Omega_2$ are stationary at the foci of $\E$.

As a review, the circle $\K$ which contains both Brocard points and the circumcenter $X_3$ is known as the  Brocard {\em circle} \cite{mw}. The symmedian point $X_6$ also lies on $\K$ and $X_3 X_6$ is known as the Brocard {\em axis}. Notice that over the porism, all said points are stationary, and therefore so is $\K$.

At least seven Brocard {\em triangles}\footnote{The 7th Brocard triangle was proposed during the course of this research.} are known (named 1st, 2nd, etc.) \cite{gibert2020-brocard}, deriving directly from $\Omega_{1,2},\K$ and related objects. It turns out the 1st, 2nd, 5th, and 7th Brocard triangles are inscribed in $\K$; see this \href{http://youtu.be/_bK-BCQv24A}{Video}.

Here we focus on the {\em second} Brocard triangle, denoted $T'$, whose vertices lie at the intersections of symmedians (cevians through the symmedian point $X_6$) with the Brocard circle \cite[Second Brocard Triangle]{mw}; see Figure~\ref{fig:broc-por-sec-tri}.

One first intriguing observation (proved below), unique to $T'$, is that over the porism, its Brocard points $\Omega_1'$ and $\Omega_2'$ of $T'$ are {\em also} stationary; see Figure~\ref{fig:iteration} and this \href{https://youtu.be/MprJtB4UW9s}{Video}. In turn, this leads to a stream of interesting properties.

\subsection*{Main results}

\begin{figure}
    \centering
    \includegraphics[width=\textwidth]{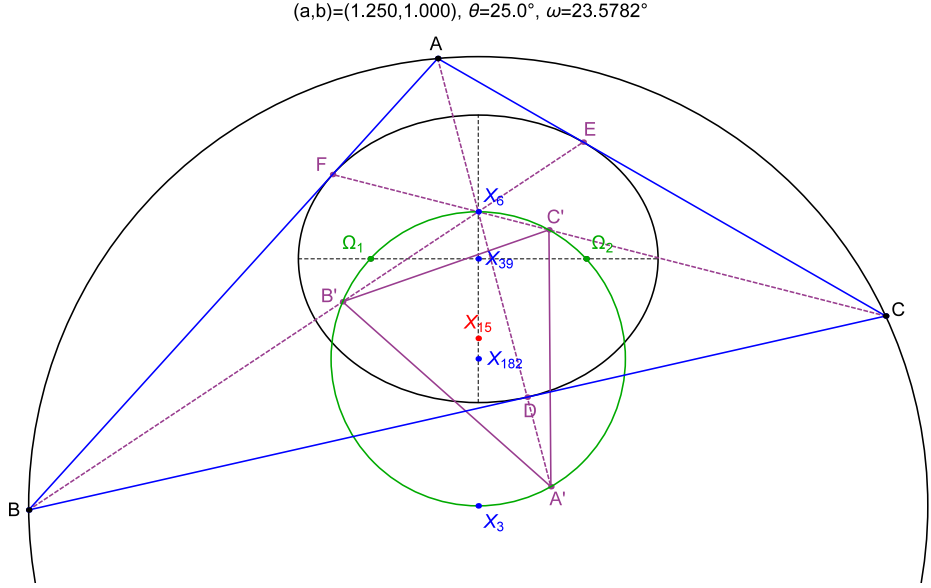}
    \caption{A 3-periodic (blue) $A B C$ in the Brocard Porism is shown inscribed in an external circle $\Gamma$ (black) and the Brocard inellipse $\E$ (black). The tangency points are given by intersections $D,E,F$ of symmedians (cevians through $X_6$) with the sidelengths. The Brocard points $\Omega_1$, $\Omega_2$ are stationary at the foci of $\E$. The Brocard circle $\K$ (green) contains $\Omega_1,\Omega_2$, the circumcenter $X_3$ and the symmedian point $X_6$, all of which are stationary. The center of $\K$ is $X_{182}$, the midpoint of $X_3 X_6$, aka the Brocard axis. Also shown (purple) is the second Brocard triangle $T'=A' B' C'$ inscribed in $\K$ whose vertices lie at the intersections of the symmedians with $\K$. The first isodynamic point $X_{15}$ is on the Brocard axis, is stationary, and common for both the original and the $T'$ family. \href{https://youtu.be/Wgwh4-neJp4}{Video}}
    \label{fig:broc-por-sec-tri}
\end{figure}

\begin{itemize}
   \item The $T'$ are 3-periodics of a second Brocard porism inscribed in $\K$ and circumscribed about a smaller, less eccentric ellipse $\E'$.
   \item Recursive calculation of $T'$ spawns an infinite sequence of ever-shrinking porisms which converge to the first isodynamic point $X{15}$ \cite[Isodynamic Points]{mw}, common to all families. Successive Brocard points lie on two circular arcs.
   \item Recursive calculation of the {\em inverse} 2nd Brocard triangles (denoted $T^*$) converges to a segment between two fixed points $P_2,U_2$ known as the Beltrami points \cite{etc-bicentric}. 
   \item Sequential Brocard circles $\K,\K',\K'',...$ are nested within each other like a Russian doll.
   \item The discrete sequence of porisms is embedded in a continuous family where $T'$ (and its inverse) can be regarded as operators which induce discrete jumps.
   \item The envelope of ellipses in the continuous porism is an ellipse with the isodynamic points as foci.
\end{itemize}


\subsection*{Related Work}

 A construction for the Brocard porism is given in \cite[Theorem 4.20, p. 129]{akopyan2007-conics}. Equibrocardal (Brocard angle conserving) triangle families are studied in \cite{johnson1960}.
Bradley defines several conics associated to the Brocard porism \cite{bradley2011-brocard}. Odehnal has studied loci of triangle centers for the poristic triangle family \cite{odehnal2011-poristic} identifying dozens of stationary triangle centers; similarly, Pamfilos proves properties of the family of triangles with fixed 9-point and circumcircle \cite{pamfilos2020}.  We have previously studied triangle center loci over selected triangle families \cite{garcia2020-brocard,reznik2020-intelligencer}. We have also analyzed the Brocard porism alongside the Poncelet homothetic family establishing a similarity link between the two \cite{reznik2020-similarityII}.

\subsection*{Article Structure} in Section~\ref{sec:review} we review definitions and prove basic Brocard relations required in further sections. In Section~\ref{sec:broc-second} we prove properties of the family of second Brocard triangles in a Brocard porism. In Section~\ref{sec:broc-second} we study the infinite, discrete sequence of porisms induced by iterative calculations of the second Brocard triangle. Finally, in Section~\ref{sec:continuous} we specify how said sequence is embedded in a continuous one.

Appendix~\ref{app:isosceles} provides explicit expressions for the vertices of an isoceles 3-periodic in the porism. Appendix~\ref{app:app_vertices} provides a 1d parametrization for any 3-periodic in the porism. Appendix~\ref{app:app_further} contains additional supporting relations required by some of our proofs. Finally, Appendix~\ref{app:app_symbols} tabulates most symbols used herein.

\begin{figure}
    \centering
    \includegraphics[width=\textwidth]{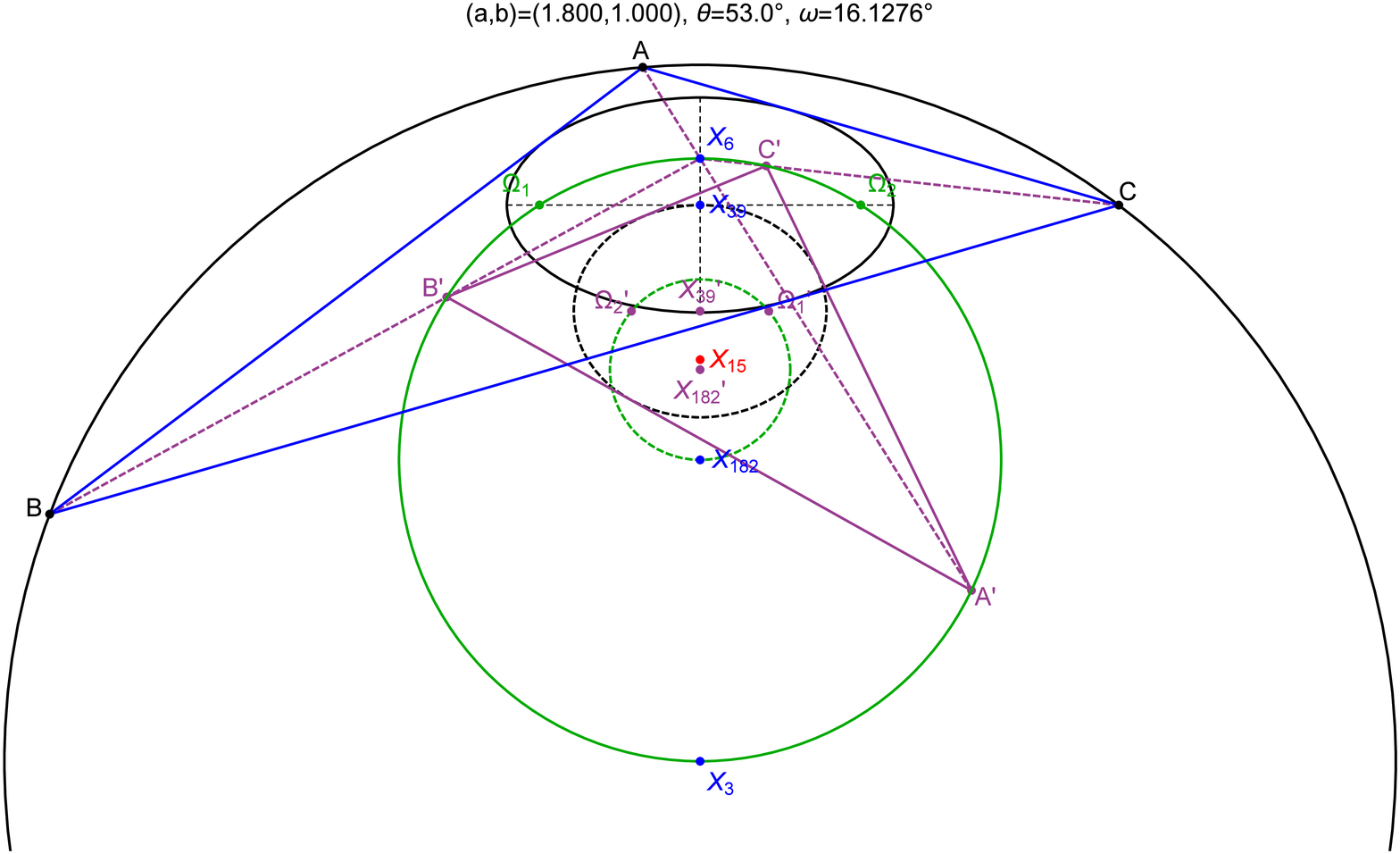}
    \caption{By construction (dashed purple), the second Brocard triangle $T'$ (purple), is inscribed in the Brocard circle $\K$ (green) of the its reference triangle $T$ (blue). Remarkably, over the Brocard porism (outer circle, inner ellipse, both in solid black), the Brocard points $\Omega_{2,1}'$ of $T'$ are also stationary. These coincide with the foci of a new, smaller, rounder, Brocard inellipse $\E'$ (dashed black). The new, stationary, Brocard circle $\K'$ (dashed green) is properly contained within $\K$. Notice the upper vertex of $\E'$ coincides with the Brocard midpoint $X_{39}$ of $T$. The isodynamic points $X_{15},X_{16}$ (latter not shown, above the page) are common and stationary for $T$ and $T'$. \href{https://youtu.be/MprJtB4UW9s}{Video}}
    \label{fig:iteration}
\end{figure}

\section{Properties of Brocard porism triangles}
\label{sec:review}
We adopt Kimberling's $X_k$ notation for triangle centers, e.g., $X_1$ for the incenter, $X_2$ for the barycenter, etc. \cite{etc}. Assume that $X_3=(0,0)$ is at the origin. Referring to Figure~\ref{fig:brocard-basic}

\begin{definition}[Brocard Points]
Let a reference triangle have vertices $ABC$ when traversed counterclockwise. The first Brocard point $\Omega_1$ is where sides $AB$, $BC$, $CA$ concur when rotated a special angle $\omega$ about $A$, $B$, $C$, respectively. The second Brocard point $\Omega_2$ is defined similarly by a $-\omega$ rotation $CB$, $BA$, $AC$ about $C$, $B$, $A$, respectively.
\end{definition}

\noindent Referring to Figure~\ref{fig:broc-por-sec-tri}:

\begin{definition}[Brocard Circle]
This is the circle $\K$ through $X_3$, and the Brocard points $\Omega_1, \Omega_2$.
\end{definition}

\noindent Note $\K$ also contains $X_6$, and it is centered on the midpoint $X_{182}$ of $X_3 X_6$ \cite{mw}.\\

\noindent Referring to Figure~\ref{fig:broc-por-sec-tri}:

\begin{definition}[Brocard Porism]
This is a 1d family of Poncelet 3-periodics inscribed in a circle of radius $R$ (circumcenter $X_3$ is stationary) and circumscribed about an ellipse of semi-axes $(a,b)$ known as the Brocard inellipse $\E$.
\end{definition}

\noindent Over the family, the Brocard angle $\omega$ is invariant and the two Brocard points $\Omega_1$ and $\Omega_2$ are stationary at the foci of $\E$ \cite{bradley2007-brocard}.

\begin{corollary}
Over the Brocard porism the Brocard circle and $X_6$ are stationary.
\end{corollary}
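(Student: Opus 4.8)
The plan is to reduce both assertions to the elementary fact that a circle is determined by three non-collinear points, feeding in only the porism data already in hand: that $X_3$, $\Omega_1$, $\Omega_2$ are stationary.

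First I would record that, by the definition of the Brocard circle, $\K$ is the (unique) circle through $X_3$, $\Omega_1$, and $\Omega_2$. Over the porism $X_3=(0,0)$ is pinned at the origin, while $\Omega_1$ and $\Omega_2$ are stationary at the foci of the fixed inellipse $\E$. Thus all three defining points are the same circle-to-circle across the entire family. Since they are non-collinear---indeed $\K$ is a genuine circle of positive radius throughout the (non-equilateral, hence non-degenerate) family---they determine one and the same circle for every triangle. Therefore $\K$ is stationary.

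Next, for $X_6$ I would invoke the known incidence recalled just above the statement: $X_6 \in \K$ and the center of $\K$ is $X_{182}$, the midpoint of the segment $X_3 X_6$. Equivalently, $X_3 X_6$ is a diameter of $\K$ and $X_6$ is the antipode of $X_3$ on $\K$. Because $\K$ is now known to be stationary, its center $X_{182}$ is stationary; combining this with the fact that $X_3$ is fixed at the origin yields $X_6 = 2\,X_{182} - X_3$, which is consequently stationary as well.

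I expect no serious obstacle here, as all the genuine difficulty lives upstream in the already-quoted result that the Brocard points sit at the foci of the fixed inellipse (this is precisely what makes $\Omega_1, \Omega_2$ stationary); once that is granted, the corollary is a purely synthetic consequence of three fixed points pinning down a circle. The single point deserving a word of care is non-degeneracy: one must know that $X_3, \Omega_1, \Omega_2$ are not collinear, so that $\K$ is a bona fide circle and is uniquely determined. This is immediate because $\K$ is assumed to be an honest circle throughout the porism, and because the invariance of the Brocard angle $\omega<\pi/6$ keeps the triangles non-equilateral, so that $\Omega_1 \neq \Omega_2$ and $\E$ remains a proper ellipse with distinct foci.
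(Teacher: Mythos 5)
Your argument is correct and essentially identical to the paper's: the paper likewise notes that $\K$ is pinned down by the stationary points $\Omega_1,\Omega_2,X_3$ and that $X_6$ is the antipode of $X_3$ on the Brocard circle, hence stationary. Your added remark on non-collinearity (ruling out the equilateral case via $\omega<\pi/6$) is a harmless extra precaution the paper leaves implicit.
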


\noindent This stems from the fact that over the porism $\Omega_1,\Omega_2,X_3$ are stationary. Since $X_6$ is antipodal to $X_3$ on the so-called Brocard axis \cite{mw}, it is also stationary.


After \cite{etc}:

\begin{definition}[Barycentric Combo]
Let $P$ and $U$ be finite points on a triangle's plane with normalized barycentrics $(p,q,r)$ and $(u,v,w)$, respectively. Let $f$ and $g$ be homogeneous functions of the sidelengths. The $(f,g)$ {\em combo} of $P$ and $U$, also denoted $f*P + g*U$, is the point with barycentrics $(f\,p + g\,u,f\,q + g\,v, f\,r + g\,w)$.
\end{definition}

\noindent The Cyrillic letter $\Sha$ shall henceforth denote $\cot\omega$. Since $\omega\in[0,\pi/6]$ \cite[Brocard Angle]{mw}, then:

\begin{remark}
 $\Sha{\geq}\sqrt{3}$.
 \label{rem:sha-min}
\end{remark}

\begin{lemma}
Over the porism, the two Isodynamic points $X_{15}$ and $X_{16}$ are stationary and given by:

\[
X_{15}=\left[0, \frac{R(\sqrt{3}-\Sha)}{
 \sqrt{\Sha^2-3}}\right],\;\;\;
  X_{16}=\left[0, -\frac{R(\sqrt{3}+\Sha)}{
 \sqrt{\Sha^2-3}}\right]
\]
\label{lem:x15x16}
\end{lemma}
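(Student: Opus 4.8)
The plan is to identify $X_{15}$ and $X_{16}$ as the two limiting (Poncelet) points of the coaxial pencil of circles spanned by the circumcircle $\Gamma$ and the Brocard circle $\K$. Classically the three Apollonius circles of a triangle are coaxial and meet exactly at the isodynamic points, while the circumcircle and the Brocard circle both lie in the conjugate (orthogonal) pencil, whose common pair of mutually inverse points is precisely $X_{15},X_{16}$. Once this is granted, the stationarity claim comes almost for free: over the porism $\Gamma$ is literally fixed (since $X_3$ and $R$ are), and $\K$ is stationary by the Corollary, so the pencil they span, and therefore its limiting points, cannot move.

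For the explicit coordinates I would work in the frame with $X_3=(0,0)$ and the Brocard axis along the $y$-axis, so that $\Gamma\colon x^2+y^2=R^2$. Because $\K$ passes through $X_3$ and has $X_3X_6$ as a diameter, I write its center as $(0,m)$ and radius $|m|$, i.e.\ $\K\colon x^2+y^2-2my=0$. Setting $X_{15}=(0,y_{15})$ and $X_{16}=(0,y_{16})$, the condition that this pair be inverse in $\Gamma$ forces $y_{15}y_{16}=R^2$ (the positive product also pins them to a common ray), while inversion in $\K$ forces $(y_{15}-m)(y_{16}-m)=m^2$; subtracting, these reduce to $y_{15}+y_{16}=R^2/m$. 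Hence $y_{15},y_{16}$ are the two roots of
\[
t^2-\frac{R^2}{m}\,t+R^2=0 .
\]

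The one remaining input is $m$. I would use the classical Brocard-circle radius $r_{\K}=\tfrac{R}{2\cos\omega}\sqrt{1-4\sin^2\omega}$ and rewrite it through $\Sha=\cot\omega$ via $\Sha^2-3=(1-4\sin^2\omega)/\sin^2\omega$, which gives $r_{\K}=\tfrac{R}{2\Sha}\sqrt{\Sha^2-3}$ and hence $m=-\tfrac{R}{2\Sha}\sqrt{\Sha^2-3}$ (the minus sign fixed by the orientation placing the center of $\K$ on the negative half of the axis). Then $R^2/m=-2R\Sha/\sqrt{\Sha^2-3}$ and the discriminant is $(R^2/m)^2-4R^2=12R^2/(\Sha^2-3)$, so the roots are $t=R(-\Sha\pm\sqrt{3})/\sqrt{\Sha^2-3}$, which are exactly the stated expressions for $X_{15}$ and $X_{16}$; Remark~\ref{rem:sha-min} guarantees $\Sha^2-3\ge 0$, so they are real.

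I expect the main obstacle to be justifying the coaxial-pencil fact cleanly, in particular that the Brocard circle (not merely the circumcircle) is orthogonal to the Apollonius pencil and so shares the isodynamic points as limiting points, together with nailing the sign of $m$. A fully self-contained alternative, closer to the computational style of the appendices, is to parametrize a generic $3$-periodic of the porism as in Appendix~\ref{app:app_vertices}, substitute into the known barycentrics of $X_{15},X_{16}$, pass to Cartesians, and verify directly that the parameter cancels; in that route the difficulty migrates to the trigonometric simplification and to re-deriving $|X_3X_6|$ in terms of $\omega$ from the identity $\Sha=\cot A+\cot B+\cot C$.
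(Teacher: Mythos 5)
Your proof is correct, but it takes a genuinely different route from the paper's. The paper quotes Peter Moses's barycentric combos $X_{15}=\sqrt{3}*X_3+\Sha*X_6$ and $X_{16}=\sqrt{3}*X_3-\Sha*X_6$ from ETC; stationarity is then immediate because $X_3$, $X_6$ and $\Sha$ are all invariant over the porism, and the explicit coordinates drop out by normalizing the combo with $X_3=[0,0]$ and $X_6=\bigl[0,-R\sqrt{\Sha^2-3}/\Sha\bigr]$ (the paper defers that computation to a CAS-assisted appendix). You instead characterize $X_{15},X_{16}$ as the limiting points of the Schoute coaxal pencil spanned by the circumcircle and the Brocard circle, which reduces the coordinates to the quadratic $t^2-(R^2/m)t+R^2=0$ with $m=-R\sqrt{\Sha^2-3}/(2\Sha)$; I checked the algebra and it reproduces the stated expressions exactly (including the sign of $m$, the discriminant $12R^2/(\Sha^2-3)$, and consistency of the root signs with $\Sha\geq\sqrt3$). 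Each approach leans on exactly one classical fact taken from the literature: the paper on the combo identity, you on the statement that the Brocard circle lies in the conjugate (orthogonal) pencil of the Apollonius circles and hence shares the isodynamic points as limiting points with the circumcircle. You correctly flag this as the step needing a citation or proof --- it is the Schoute coaxal system, standard but not proved in the paper either, and note that the paper itself only records the weaker half of it (self-inverseness in the circumcircle). What your route buys is an elementary, hand-checkable derivation of the explicit coordinates in place of the paper's CAS computation; what it costs is that the assignment of which root is $X_{15}$ and which is $X_{16}$ is fixed only by matching conventions, and the key pencil fact is heavier to justify from scratch than the one-line combo.
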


\begin{proof}
Peter Moses (cited in \cite[X(15), X(16)]{etc}) derives the following combos for the two isodynamic points:

\begin{align}
X_{15} =& \sqrt{3}*X_3 + \Sha*X_6 \label{eqn:combo-x15} \\
X_{16} =& \sqrt{3}*X_3 - \Sha*X_6 \nonumber
\end{align}

With all involved quantities invariant, the result follows. Note that isodynamic points are self-inverses with respect to the circumcircle  \cite[Isodynamic Points]{mw}. For how we obtained the explicit expressions see Appendix~\ref{app:app_further}.
\end{proof}

\begin{lemma}
Let $R$ and $\omega$ denote a triangle's circumradius and Brocard angle. The semi-axes $(a,b)$ and center $X_{39}$ of the Brocard inellipse $\E$ are given by:

\begin{gather*}
[a,b]= R\left[\sin\omega,2\sin^2\omega\right]=R\left[\frac{1}{\sqrt{1+\Sha^2}},\frac{2}{{1+\Sha^2}}\right]\\
 X_{39}=\left[0,-\frac{R\Sha\sqrt{\Sha^2-3}}{\Sha^2+1}\right]
\end{gather*}
 \label{lem:ab}
\end{lemma}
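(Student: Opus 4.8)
The plan is to lean on poristic invariance to reduce the claim to a single explicit triangle, and then to pin down the axis-aligned ellipse from its foci together with one tangency condition. Since $R$, $\omega$, and the inellipse $\E$ are all constant along the Brocard porism, it suffices to verify the stated formulas on one conveniently chosen $3$-periodic. I would take the isosceles representative whose vertices are written out in Appendix~\ref{app:isosceles}; with $X_3=(0,0)$ it is symmetric about the $y$-axis, which is the Brocard axis $X_3X_6$. By Lemma~\ref{lem:x15x16} the isodynamic points — and hence every center on the Brocard axis, including $X_{39}$ — lie on the $y$-axis, so $\E$ is symmetric about it and may be written as $x^2/a^2+(y-y_0)^2/b^2=1$ with three unknowns $a$, $b$, $y_0$.

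Two of these three unknowns are fixed immediately by the cited fact that the foci of $\E$ are the stationary Brocard points $\Omega_1,\Omega_2$. I would locate $\Omega_1,\Omega_2$ for the representative triangle, either from the rotation characterization in the Brocard-points definition or from their standard barycentrics, and convert to Cartesian coordinates using the explicit vertices. Their reflective symmetry across the $y$-axis places them at $(\pm c,y_0)$, so their midpoint is the center $X_{39}=(0,y_0)$ and their half-separation is the focal distance $c$; this determines $y_0$ outright and imposes the ellipse relation $c^2=a^2-b^2$. The remaining scalar comes from tangency: for the isosceles representative the base is horizontal, so $\E$ meets it at the co-vertex $(0,y_0-b)$ (equivalently, at the foot of the symmedian on the base, since $\E$ is the inconic with perspector $X_6$), and equating these $y$-coordinates yields $b$, whence $a=\sqrt{b^2+c^2}$. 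All three quantities are then explicit functions of $R$ and $\omega$.

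What remains is to massage these into the compact closed forms in the statement. Writing $\Sha=\cot\omega$ and using $1+\Sha^2=\csc^2\omega$ converts $R\sin\omega$ and $2R\sin^2\omega$ into $R/\sqrt{1+\Sha^2}$ and $2R/(1+\Sha^2)$, which is exactly the stated equality for $[a,b]$; the more delicate step is the center, where the identity $\cot^2\omega-3=(1-4\sin^2\omega)/\sin^2\omega$ recasts the expected $y_0=-R\cos\omega\sqrt{1-4\sin^2\omega}$ as $-R\,\Sha\sqrt{\Sha^2-3}/(\Sha^2+1)$. Note this square root is real precisely when $\Sha\ge\sqrt3$, i.e. Remark~\ref{rem:sha-min}, in agreement with the radicand $1-4\sin^2\omega\ge0$ holding exactly because $\omega\le\pi/6$.

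I expect the main obstacle to be bookkeeping rather than anything conceptual: keeping orientation and signs consistent — that $a$ is the horizontal (major) semi-axis while $b$ is the vertical (minor) one, and that $y_0<0$ — and carrying the nested radicals through cleanly so that $\sqrt{1-4\sin^2\omega}$ and $\sqrt{\Sha^2-3}$ line up. A useful consistency check at the end is that $c^2+y_0^2=R^2(1-4\sin^2\omega)$, which recovers the known circumcenter-to-Brocard-point distance $R\sqrt{1-4\sin^2\omega}$ and confirms the placement of the foci.
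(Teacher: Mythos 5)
Your argument is correct, but for the semi-axes it takes a genuinely different route from the paper's. The paper proves $[a,b]=R[\sin\omega,2\sin^2\omega]$ for an \emph{arbitrary} triangle with no appeal to the porism: it combines the classical identities $R=s_1s_2s_3/(4\Delta)$ and $\sin\omega=2\Delta/\sqrt{\lambda}$, with $\lambda=(s_1s_2)^2+(s_2s_3)^2+(s_3s_1)^2$, with the known closed forms $a=s_1s_2s_3/(2\sqrt{\lambda})$ and $b=2s_1s_2s_3\Delta/\lambda$ for the Brocard inellipse, so the trigonometric expressions drop out in two lines. You instead reduce to one isosceles representative by poristic invariance and reconstruct the axis-aligned ellipse from its foci (the Brocard points) plus a single horizontal tangency; this works --- your $c^2=R^2\sin^2\omega(1-4\sin^2\omega)$ together with $b=2R\sin^2\omega$ does recombine to $a=R\sin\omega$ --- but it costs an explicit computation of $\Omega_{1,2}$ for the representative and an extra justification that every triangle sits in a porism containing an isosceles member (true by the axial symmetry of the circumcircle--inellipse pair about the Brocard axis, but it should be said), whereas the paper's computation for $[a,b]$ is both shorter and more general. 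For the center $X_{39}$ the two proofs essentially coincide: the paper also falls back on an explicit coordinate computation over the isosceles triangle of Appendix~\ref{app:isosceles} (carried out by CAS, see Appendix~\ref{app:app_further}), which is precisely your foci-midpoint calculation; your identity $\cot^2\omega-3=(1-4\sin^2\omega)/\sin^2\omega$ converting $-R\cos\omega\sqrt{1-4\sin^2\omega}$ into $-R\Sha\sqrt{\Sha^2-3}/(\Sha^2+1)$ checks out, as does your consistency check against the second Brocard circle radius $R\sqrt{1-4\sin^2\omega}$. One small caution on signs: with the vertices of Appendix~\ref{app:isosceles} the base lies at $y=R-h$, which by Lemma~\ref{lem:reciprocal} equals $y_0-b$, so your choice of the lower co-vertex as the tangency point is the right one, but it deserves a check rather than an assertion.
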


\begin{proof}

Consider a triangle $T$ with sidelengths $s_1,s_2,s_3$, area  $\Delta$, and circumradius  $R$. The following identities appear in \cite{bradley2007-brocard,shail1996-brocard}:

\[R=\frac{s_1 s_2 s_3}{4\Delta}, \;\;\; \sin\omega=\frac{2\Delta}{\sqrt{\lambda}},\;\;\; |\Omega_1-\Omega_2|^2=4c^2=4R^2\sin^2\omega (1-4\sin^2\omega)\]

\noindent where $\lambda=(s_1 s_2)^2+(s_2 s_3)^2+ (s_3 s_1)^2$ (named $\Gamma$ in \cite[Eqn. 2]{shail1996-brocard}), and $c^2=a^2-b^2$. The result follows from combining the above into the following expressions for the Brocard inellipse axes \cite[Brocard Inellipse]{mw}:

\[ a =\frac{s_1 s_2 s_3}{2\sqrt{\lambda}},\;\;\;\;\;\; b =\frac{2 s_1 s_2 s_3 \Delta}{\lambda}.\]

\noindent For how explicit expressions were obtained for $X_{39}$, see Appendix~\ref{app:app_further}.

\end{proof}

\begin{proposition}
The circumradius $R$ and $\Sha$ are given by:

 \[R=\frac{2 a^2}{b},\;\;\; \Sha=\frac{\sqrt{4 a^2-b^2}}{b}.\]
 \label{prop:wRab}
\end{proposition}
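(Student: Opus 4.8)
The plan is to simply invert the two relations of Lemma~\ref{lem:ab}, which express the semi-axes $(a,b)$ of $\E$ as functions of $R$ and $\omega$ (equivalently $\Sha=\cot\omega$). Since both semi-axes scale linearly with $R$, the cleanest route is to first form the dimensionless ratio $b/a$, which cancels $R$ and isolates a single $\omega$-dependent quantity; once that is pinned down, one back-substitution recovers $R$.

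Concretely, from $a=R\sin\omega$ and $b=2R\sin^2\omega$ I would compute $b/a = 2\sin\omega$, so that $\sin\omega = b/(2a)$. Substituting into $a = R\sin\omega$ gives $a = Rb/(2a)$, hence $R = 2a^2/b$, the first claimed identity. For the second, I would use $\cos\omega = \sqrt{1-\sin^2\omega} = \sqrt{4a^2-b^2}/(2a)$, taking the positive root, and then $\Sha = \cot\omega = \cos\omega/\sin\omega = \sqrt{4a^2-b^2}/b$.

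Alternatively, working directly with the $\Sha$-forms $a = R/\sqrt{1+\Sha^2}$ and $b = 2R/(1+\Sha^2)$ from Lemma~\ref{lem:ab}, the ratio gives $b/a = 2/\sqrt{1+\Sha^2}$, so $1+\Sha^2 = 4a^2/b^2$ and thus $\Sha = \sqrt{4a^2-b^2}/b$; plugging $\sqrt{1+\Sha^2}=2a/b$ into $a = R/\sqrt{1+\Sha^2}$ again yields $R = 2a^2/b$. There is no genuine obstacle here, as the proposition is exactly the inversion of Lemma~\ref{lem:ab}; the only point deserving a word is the sign of the square root, which is settled by Remark~\ref{rem:sha-min} (equivalently $\omega\le\pi/6$, forcing $\sin\omega=b/(2a)\le 1/2$ and hence $4a^2-b^2>0$ and $\Sha\ge\sqrt3>0$).
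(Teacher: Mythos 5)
Your proposal is correct and takes essentially the same approach as the paper, which simply states that the result ``follows directly from Lemma~\ref{lem:ab}''; you have merely spelled out the inversion algebra that the authors left implicit. The remark about the sign of the square root via Remark~\ref{rem:sha-min} is a sensible extra detail but not a departure from the paper's route.
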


\begin{proof}
Follows directly from Lemma \ref{lem:ab}.
\end{proof}

\begin{lemma}
The coordinates for the symmedian point $X_6$ are given by:

\[
X_6=\left[0,-\frac{R\sqrt{ \Sha^2 -3}}{ \Sha}\right] \]
\label{lem:x6}
\end{lemma}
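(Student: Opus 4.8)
The plan is to invert the isodynamic combo relations already recorded in Lemma~\ref{lem:x15x16}, since those express the two known, explicit isodynamic points as barycentric combos of the known point $X_3$ and the sought point $X_6$. Solving a single linear relation for $X_6$ then delivers the claim directly, with no recourse to the triangle's vertices.

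First I would pass from barycentric combos to Cartesian coordinates. If $P$ and $U$ carry normalized barycentrics and $f,g$ are homogeneous sidelength functions, then the combo $f*P+g*U$ has barycentric coordinate-sum $f+g$, so its Cartesian position is the affine combination $(fP+gU)/(f+g)$, where $P,U$ now denote Cartesian positions. Applying this to the first relation of Lemma~\ref{lem:x15x16} gives
\[
X_{15}=\frac{\sqrt{3}\,X_3+\Sha\,X_6}{\sqrt{3}+\Sha}.
\]

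Next I would use $X_3=(0,0)$, which collapses this to $X_{15}=\tfrac{\Sha}{\sqrt{3}+\Sha}X_6$, hence $X_6=\tfrac{\sqrt{3}+\Sha}{\Sha}\,X_{15}$. Substituting the explicit value $X_{15}=[0,\,R(\sqrt{3}-\Sha)/\sqrt{\Sha^2-3}]$ and simplifying through the difference of squares $(\sqrt{3}+\Sha)(\sqrt{3}-\Sha)=3-\Sha^2=-(\Sha^2-3)$ yields $X_6=[0,\,-R\sqrt{\Sha^2-3}/\Sha]$, as claimed; note $\Sha\ge\sqrt{3}$ by Remark~\ref{rem:sha-min}, so the radical is real and the sign is unambiguous. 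As an independent check one may instead eliminate $X_3$ between both combo relations, obtaining $X_6=[(\sqrt{3}+\Sha)X_{15}-(\sqrt{3}-\Sha)X_{16}]/(2\Sha)$, which reproduces the same expression without invoking $X_3=(0,0)$.

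The only genuinely delicate point is the first step: the combo coefficients $\sqrt{3}$ and $\Sha$ must be read as \emph{un-normalized} barycentric weights, so converting to Cartesian coordinates requires dividing by their sum $\sqrt{3}+\Sha$. Once that normalization is handled correctly, the remainder is a one-line algebraic simplification, and I expect no real obstacle beyond careful bookkeeping of the radical's sign.
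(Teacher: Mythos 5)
Your proof is correct, but it takes a different route from the paper's. The paper derives the expression for $X_6$ by brute force: it takes the trilinear representation of $X_6$, converts to Cartesian coordinates via the weighted-vertex formula of Appendix~\ref{app:app_further} applied to the explicit isosceles $3$-periodic of Proposition~\ref{prop:orbita}, and then rewrites the result in terms of $(R,\Sha)$ using Lemma~\ref{lem:reciprocal}. You instead invert the Moses combo $X_{15}=\sqrt{3}*X_3+\Sha*X_6$, and your handling of the normalization (dividing by the weight sum $\sqrt{3}+\Sha$ to pass from barycentric combos to the affine combination of Cartesian positions) is exactly the delicate point and is done correctly; the algebra $(\sqrt{3}+\Sha)(\sqrt{3}-\Sha)=-(\Sha^2-3)$ then gives the stated formula, and your cross-check eliminating $X_3$ is consistent. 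What your route buys is a two-line derivation that needs no trilinears or CAS work; what it costs is a dependence on the explicit coordinates of $X_{15}$ from Lemma~\ref{lem:x15x16}. That dependence is legitimate here only because the paper obtains the explicit $X_{15}$ independently (by the same Appendix~\ref{app:app_further} trilinear computation), not from the combo together with a known $X_6$ --- had it been the latter, your argument would be circular. So your proof is really a consistency relation that relocates, rather than eliminates, the underlying coordinate computation; it is worth stating that caveat explicitly if you present it as a standalone derivation.
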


\noindent A derivation is provided in Appendix~\ref{app:app_further}.

\begin{corollary} The distance between circumcenter and symmedian point is given by

\[ |X_3-X_6|=\frac{R\sqrt{ \Sha^2 -3}}{ \Sha}\]
\end{corollary}

\begin{proposition}
In terms of $R$ and $\Sha$, the Brocard points are given by:

 \[\Omega_{1,2}(R,\Sha)=\frac{R\sqrt{ \Sha^2-3}}{ \Sha^2+1} \left[\pm1, - \Sha  \right]\]
 \label{prop:brocs12}
\end{proposition}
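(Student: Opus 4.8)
The plan is to exploit the fact, recorded just after the Brocard-porism definition, that the Brocard points $\Omega_1,\Omega_2$ are stationary \emph{precisely} at the foci of the Brocard inellipse $\E$. Since Lemma~\ref{lem:ab} already supplies the center $X_{39}$ and the semi-axes $(a,b)$ of $\E$, locating the foci reduces to (i) pinning down the orientation of the major axis and (ii) computing the focal distance $c=\sqrt{a^2-b^2}$, after which the foci are simply $X_{39}\pm c\,\hat u$, with $\hat u$ the unit vector along the major axis.

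First I would settle the orientation. Consider the isosceles member of the porism (Appendix~\ref{app:isosceles}); it is symmetric about its Brocard axis, which with $X_3=(0,0)$ and $X_6,X_{39}$ lying on the $y$-axis is exactly the $y$-axis. Its Brocard inellipse---being stationary, the same $\E$ for every member---is therefore symmetric about the $y$-axis, so the principal axes of $\E$ are horizontal and vertical. Using the semi-axes of Lemma~\ref{lem:ab} I would compute
\[
a^2-b^2=\frac{R^2(\Sha^2-3)}{(\Sha^2+1)^2},
\]
which is nonnegative by Remark~\ref{rem:sha-min}, so $a$ is the semi-\emph{major} axis. To see that $a$ points horizontally, note that for the isosceles member the two Brocard points are mirror images of each other across the $y$-axis and hence lie on a common horizontal line; since that line must contain both foci, the major axis is horizontal, i.e. $\hat u=(1,0)$.

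With orientation fixed, the focal distance is
\[
c=\sqrt{a^2-b^2}=\frac{R\sqrt{\Sha^2-3}}{\Sha^2+1},
\]
and the foci are $X_{39}\pm(c,0)$. Substituting $X_{39}=\bigl[0,-R\Sha\sqrt{\Sha^2-3}/(\Sha^2+1)\bigr]$ from Lemma~\ref{lem:ab} and collecting the common factor $R\sqrt{\Sha^2-3}/(\Sha^2+1)$ over both coordinates yields $\Omega_{1,2}=\tfrac{R\sqrt{\Sha^2-3}}{\Sha^2+1}[\pm1,-\Sha]$, as claimed; the $\pm$ in the $x$-coordinate distinguishes $\Omega_1$ from $\Omega_2$.

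The main obstacle is step (i): the magnitudes $(a,b,c)$ alone do not reveal whether the foci are displaced horizontally or vertically, so one genuinely needs the reflective symmetry of the family together with the explicit isosceles configuration to force the foci onto a horizontal line. As a consistency check I would verify that the resulting $|\Omega_1-\Omega_2|^2=4c^2$ matches the identity $|\Omega_1-\Omega_2|^2=4R^2\sin^2\omega\,(1-4\sin^2\omega)$ quoted in the proof of Lemma~\ref{lem:ab}, reconfirming that the Brocard points sit at the foci. A purely computational alternative would bypass the geometry: evaluate the two Brocard points of the explicit isosceles $3$-periodic of Appendix~\ref{app:isosceles} directly from the rotation definition, and then invoke stationarity to conclude that the same expression holds for every member of the porism.
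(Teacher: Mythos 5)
Your argument is correct and arrives at the right expression, but it follows a different route from the paper. The paper's proof is a one-line reduction to Proposition~\ref{prop:pair_brocard} and Lemma~\ref{lem:reciprocal}: it takes the explicit implicit equation of the Brocard inellipse of the isosceles member $\T$ (in the half-base/height variables $d,h$), reads off its foci as in Proposition~\ref{prop:orbita}, and then substitutes the $(d,h)\leftrightarrow(R,\Sha)$ conversion of Lemma~\ref{lem:reciprocal} --- essentially the ``purely computational alternative'' you sketch at the end. You instead work from Lemma~\ref{lem:ab}: you take the semi-axes and the center $X_{39}$, compute $c=\sqrt{a^2-b^2}=R\sqrt{\Sha^2-3}/(\Sha^2+1)$, and place the foci at $X_{39}\pm(c,0)$, using the stationarity of $\E$ plus the mirror symmetry of the isosceles member (which swaps $\Omega_1\leftrightarrow\Omega_2$, hence puts the two foci on a horizontal line) to fix the orientation of the major axis. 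Your orientation step is sound and is in fact the same geometric fact the paper invokes later (in the proof of Theorem~\ref{thm:porism}) that the major axis of the Brocard inellipse lies along $\Omega_1\Omega_2$, perpendicular to the Brocard axis. What your route buys is independence from the explicit isosceles coordinates and a built-in consistency check against the identity $|\Omega_1-\Omega_2|^2=4R^2\sin^2\omega(1-4\sin^2\omega)$; what it does not settle is which of the two foci carries the label $\Omega_1$ versus $\Omega_2$ (the $\pm$ sign assignment), though the proposition as stated treats that as a convention, and the paper's own proof is no more explicit on this point.
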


\begin{proof} Follows from Proposition \ref{prop:pair_brocard} and Lemma \ref{lem:reciprocal}.
\end{proof}

\begin{corollary}
The center $X_{39}$ of the Brocard inellipse $\E$ is given by:
\[ X_{39}=\left[0,-R {\frac {\Sha\,
\sqrt {\Sha^2-3}}{\Sha^2+1}} \right]\]
\end{corollary}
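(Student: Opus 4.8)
The plan is to use the defining geometric fact, reiterated throughout the excerpt, that the Brocard points $\Omega_1$ and $\Omega_2$ are precisely the foci of the Brocard inellipse $\E$. Since the center of any ellipse is the midpoint of its two foci, the center $X_{39}$ is simply
\[
X_{39}=\tfrac{1}{2}\left(\Omega_1+\Omega_2\right),
\]
so the entire argument reduces to averaging the two coordinate vectors supplied by Proposition~\ref{prop:brocs12}.

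Concretely, I would substitute
\[
\Omega_{1,2}=\frac{R\sqrt{\Sha^2-3}}{\Sha^2+1}\left[\pm1,-\Sha\right]
\]
and add. The common scalar prefactor $R\sqrt{\Sha^2-3}/(\Sha^2+1)$ pulls out of both terms; the first coordinates $+1$ and $-1$ cancel, leaving a vanishing $x$-component, while the second coordinates are identical and equal to $-\Sha$. Halving the sum therefore yields
\[
X_{39}=\frac{R\sqrt{\Sha^2-3}}{\Sha^2+1}\left[0,-\Sha\right]=\left[0,-R\frac{\Sha\sqrt{\Sha^2-3}}{\Sha^2+1}\right],
\]
which is exactly the claimed expression.

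There is no genuine obstacle here: once the midpoint-of-foci characterization is invoked, the computation is a one-line average with no case analysis or algebraic simplification beyond the cancellation of the $x$-coordinates. The only point worth flagging is that this result is fully consistent with the expression for $X_{39}$ already recorded in Lemma~\ref{lem:ab}, which was obtained there by a more direct route from the inellipse axes; the present derivation serves as an independent cross-check, confirming that the center computed as the focal midpoint agrees with the center computed from the semi-axis data.
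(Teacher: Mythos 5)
Your proposal is correct and matches the paper's (implicit) intent: the corollary is placed immediately after Proposition~\ref{prop:brocs12} precisely because $X_{39}$, the Brocard midpoint, is the midpoint of the foci $\Omega_1,\Omega_2$ of $\E$, and averaging the two expressions there gives the stated coordinates. The cross-check against the expression already recorded in Lemma~\ref{lem:ab} is a nice touch but not a different method.
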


\section{Properties of the family of second Brocard triangles}
\label{sec:broc-second}
Upwards of seven Brocard triangles are defined in \cite{gibert2020-brocard}. The 1st, 2nd, 5th, and 7th Brocard triangles are inscribed in the Brocard circle, as shown on this \href{https://youtu.be/_bK-BCQv24A}{video}. Henceforth we shall focus on the second Brocard triangle, denoted $T'=A' B' C'$. All primed quantities ($\Omega_i'$, $\omega'$, etc.) below refer to those of $T'$. Specifically, $X_i'$ stands for triangle center $X_i$ of $T'$.\\

\noindent Referring to Figure~\ref{fig:broc-por-sec-tri}:

\begin{definition}[Second Brocard Triangle]
The vertices $A',B',C'$ of the second Brocard triangle $T'$ lie at the intersections of cevians through $X_6$ with the Brocard circle $\K$, i.e., $T'$ is inscribed in $\K$.
\end{definition}

\noindent Over the Brocard porism:

\begin{lemma}
$X_3'$ (equivalent to $X_{182}$) is stationary and given by:

\[
X_3' = X_{182}=\left[0,-\frac{R\sqrt{ \Sha^2 -3}}{2\Sha}\right] \]
\label{lem:x3}
\end{lemma}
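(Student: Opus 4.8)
The plan is to compute the circumcenter $X_3'$ of the second Brocard triangle $T'$ directly and show it coincides with the already-known point $X_{182}$, the midpoint of the Brocard axis $X_3X_6$. The key structural observation is that $T'$ is \emph{inscribed in the Brocard circle $\K$} by definition, so its circumcircle is exactly $\K$; therefore its circumcenter is simply the center of $\K$. Since $\K$ is centered at $X_{182}$, the midpoint of $X_3X_6$, and both $X_3=(0,0)$ and $X_6$ are stationary over the porism by the earlier Corollary, the point $X_3'=X_{182}$ is automatically stationary. This reduces the entire lemma to the single identity $X_{182}=\tfrac{1}{2}(X_3+X_6)$ together with the explicit coordinate computation.

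First I would record that, since $X_3=(0,0)$ and $X_6=\left[0,-R\sqrt{\Sha^2-3}/\Sha\right]$ from Lemma~\ref{lem:x6}, the midpoint is immediately
\[
X_{182}=\frac{X_3+X_6}{2}=\left[0,-\frac{R\sqrt{\Sha^2-3}}{2\Sha}\right],
\]
which is exactly the claimed expression. This handles the coordinate formula with no further work. The substantive point to justify is the \emph{identification} $X_3'=X_{182}$, i.e.\ that the center of $\K$ really is the circumcenter of $T'$ and not merely some other center of $T'$.

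The main obstacle is therefore conceptual rather than computational: I must verify that $\K$ is genuinely the \emph{circumcircle} of $T'$, meaning $A',B',C'$ are three distinct points on $\K$ so that $\K$ is their unique common circle. By definition the vertices $A',B',C'$ are the second intersections of the three symmedians (cevians through $X_6$) with $\K$; each symmedian passes through $X_6\in\K$ and meets $\K$ in one further point, giving $A',B',C'$ on $\K$. As long as these three points are distinct (which holds for a nondegenerate 3-periodic, guaranteed by $\Sha>\sqrt{3}$ from Remark~\ref{rem:sha-min}, so $\E$ is a genuine ellipse and the triangle is scalene away from the isosceles fibers), the circle through them is forced to be $\K$, and its circumcenter is its center $X_{182}$. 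Stationarity of $X_3'$ then follows from the stationarity of $\Omega_1,\Omega_2,X_3,X_6$ already established, since these determine $\K$ and hence its center.

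Alternatively, to make the argument self-contained and independent of the synthetic "inscribed in $\K$" claim, I would fall back on the explicit parametrization of the porism vertices from Appendix~\ref{app:app_vertices}: compute the symmedian point $X_6$, intersect each symmedian with $\K$ to obtain $A',B',C'$ as functions of the single porism parameter, and then compute their circumcenter by solving the two perpendicular-bisector equations. I expect the resulting coordinates to collapse to $\left[0,-R\sqrt{\Sha^2-3}/(2\Sha)\right]$ independently of the parameter, which simultaneously verifies the formula and re-proves stationarity. The algebra here is the routine part; the one place to be careful is confirming the parameter-independence survives the simplification, for which Proposition~\ref{prop:wRab} and the relations of Lemma~\ref{lem:ab} expressing everything in terms of $R$ and $\Sha$ will be the decisive tools.
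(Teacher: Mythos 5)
Your proposal is correct and follows essentially the same route as the paper: the paper likewise observes that $T'$ is by definition inscribed in the fixed Brocard circle $\K$, whose center $X_{182}$ is the midpoint of $X_3X_6$, and then reads off the coordinates from $X_3=[0,0]$ and Lemma~\ref{lem:x6}. Your extra check that $A',B',C'$ are distinct (so that $\K$ is genuinely the circumcircle) is a small precaution the paper leaves implicit.
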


This stems from the fact that $T'$ is inscribed in fixed $\K$ whose center is $X_{182}$. The explicit formula is obtained by noting that $X_{182}$ is the midpoint of $X_3 X_6$, with $X_3=[0,0]$ and $X_6$ as given in Lemma~\ref{lem:x6}. 

\begin{lemma}
$X_6'$ (equivalent to $X_{574}$) is stationary and given by:

\[ X_6' = X_{574}= \left[0, -\frac{R\Sha \sqrt{\Sha^2-3}}{\Sha^2+3} \right] \]

\label{lem:x574}
\end{lemma}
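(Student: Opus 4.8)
The plan is to compute the symmedian point $X_6'$ of the second Brocard triangle $T'$ directly in barycentric or Cartesian coordinates and then simplify to the claimed closed form. Since the statement asserts that $X_6'$ is stationary and equals $X_{574}$ of the reference triangle, the natural strategy is to parametrize a generic $3$-periodic in the porism, construct the vertices $A',B',C'$ of $T'$ explicitly, and evaluate $X_6'$ as a function of the parametrization; the punchline is that the parameter drops out.

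First I would use the $1$d parametrization of porism vertices promised in Appendix~\ref{app:app_vertices} (or, for a first pass, the isosceles configuration of Appendix~\ref{app:isosceles}) to write $A,B,C$ as explicit functions of $R$, $\Sha$, and the poristic parameter. From these I would obtain the sidelengths $s_1,s_2,s_3$ and the symmedian point $X_6$ of $T$, which by Lemma~\ref{lem:x6} sits at $[0,-R\sqrt{\Sha^2-3}/\Sha]$. Next I would locate the vertices of the second Brocard triangle: by definition $A',B',C'$ are the intersections of the cevians through $X_6$ (the symmedians) with the Brocard circle $\K$, whose center $X_{182}$ and radius are already pinned down by Lemma~\ref{lem:x3} and the preceding corollary on $|X_3-X_6|$. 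Intersecting three lines through the fixed point $X_6$ with the fixed circle $\K$ yields $A',B',C'$ as functions of the poristic parameter.

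With $A',B',C'$ in hand I would compute the sidelengths $s_1',s_2',s_3'$ of $T'$ and then form its symmedian point via the standard barycentric formula $X_6' = ((s_1')^2 : (s_2')^2 : (s_3')^2)$, converting back to Cartesian coordinates using $A',B',C'$. Finally I would simplify the resulting expression and verify that, after the poristic parameter cancels, it collapses to $[0,-R\Sha\sqrt{\Sha^2-3}/(\Sha^2+3)]$. As a sanity cross-check I would confirm this agrees with Moses' combo description of $X_{574}$ in \cite{etc}, exactly as Lemma~\ref{lem:x15x16} leveraged the combo for $X_{15},X_{16}$, which would also supply an independent route to the explicit formula and justify the identification $X_6' = X_{574}$.

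The main obstacle I anticipate is algebraic rather than conceptual: the symmedian point depends on the squared sidelengths of $T'$, and those sidelengths are themselves square roots of quadratics in the intersection coordinates, so the raw expression for $X_6'$ will be a large rational function of the poristic parameter whose parameter-independence is not visually obvious. The crucial simplification step is showing this dependence cancels, which is precisely where the poristic (Poncelet-closure) structure must be invoked; I expect this is cleanest if I exploit the stationarity already established for $X_3,X_6,\Omega_1,\Omega_2$ and the fixed circle $\K$, reducing the computation to one in a symmetric (e.g.\ isosceles) representative and then arguing invariance, rather than brute-forcing the general parametrization.
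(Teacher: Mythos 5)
Your route is genuinely different from the paper's. The paper does not compute $X_6'$ from the vertices of $T'$ at all: it invokes the ETC identification of $X_6'$ with the reference triangle's $X_{574}$, then writes $X_{574}$ as the $\K$-inverse of $X_{187}$, which in turn is the circumcircle-inverse of $X_6$. Since $X_6$, the circumcircle, and $\K$ are all already known to be stationary, stationarity of $X_6'$ follows with no parameter-chasing; the explicit coordinates are then obtained (Appendix~\ref{app:app_further}) by evaluating trilinears on the single isosceles representative of Proposition~\ref{prop:orbita}, which is legitimate only \emph{because} stationarity has already been secured. Your primary plan --- run the full 1d parametrization of Appendix~\ref{app:app_vertices}, build $A',B',C'$ as symmedian--$\K$ intersections, form $X_6'=((s_1')^2:(s_2')^2:(s_3')^2)$, and watch the parameter cancel --- is sound in principle and would prove both claims at once, at the cost of a substantially heavier CAS computation than the paper needs. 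What the paper's approach buys is exactly the avoidance of that cancellation; what yours buys is independence from the ETC facts ($X_6'=X_{574}$ and the two inversion identities), which the paper takes on citation.

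The one genuine gap is in your proposed shortcut at the end. Computing $X_6'$ only for the isosceles representative and then ``arguing invariance'' does not close the argument: the stationarity of $X_3$, $X_6$, $\Omega_1$, $\Omega_2$, and $\K$ does \emph{not} by itself imply that $X_6'$ is stationary, because the cevians through the fixed $X_6$ rotate as the 3-periodic turns, so $A',B',C'$ sweep around $\K$ and a priori their symmedian point could move. You must either carry out the full-parameter computation and exhibit the cancellation, or supply a structural reason for invariance --- and the double-inversion chain $X_6\mapsto X_{187}\mapsto X_{574}$ through two stationary circles is precisely the missing structural reason the paper uses. Without one of these, the isosceles evaluation only pins down the value at one member of the family.
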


\begin{proof}
$X_6'$ is $X_{574}$ of the reference triangle \cite[X(6)]{etc}. The latter is the inverse of $X_{187}$ with respect to the Brocard circle \cite[X(574)]{etc}. In turn, $X_{187}$ is the inverse of $X_6$ with respect to the circumcircle. $X_6$ is stationary in the porism \cite{bradley2007-brocard}. Carrying out the inversions in reverse order, and noting that both the circumcircle and Brocard circle are stationary, obtain the claim.  See Appendix~\ref{app:app_further} for a method to obtain the expression for $X_{574}$.
\end{proof}

\begin{corollary}
 The Brocard Circle $\K'$ of $T'$ is stationary.
\end{corollary}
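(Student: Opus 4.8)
The plan is to exhibit $\K'$ as a circle whose defining data are two triangle centers already shown to be stationary. For any triangle the circumcenter $X_3$ and the symmedian point $X_6$ are antipodal on the Brocard circle, so the Brocard circle is precisely the circle having the segment $X_3 X_6$ as a diameter; equivalently, it is centered at the midpoint $X_{182}$ with radius $|X_3-X_6|/2$. Applying this characterization to $T'$ rather than to the reference triangle, $\K'$ is the circle with diameter $X_3' X_6'$, centered at their midpoint and of radius $|X_3'-X_6'|/2$.

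First I would invoke Lemma~\ref{lem:x3}, which identifies $X_3'$ with $X_{182}$ of the reference triangle and gives its explicit $R$- and $\Sha$-dependent coordinates, thereby establishing that $X_3'$ is fixed over the porism (both $R$ and $\Sha$ being porism invariants). Next I would invoke Lemma~\ref{lem:x574}, which identifies $X_6'$ with $X_{574}$ and likewise exhibits it as stationary. With both endpoints of the diameter pinned down, the center $(X_3'+X_6')/2$ and the radius $|X_3'-X_6'|/2$ are determined by invariant quantities alone, and hence the circle $\K'$ they define does not move as the $3$-periodic traverses the Brocard porism.

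There is essentially no remaining obstacle: the substantive work, namely tracking the chain of inversions that locates $X_{574}$ and verifying the midpoint formula for $X_{182}$, has already been discharged inside Lemmas~\ref{lem:x3} and~\ref{lem:x574}. The only point meriting care is the appeal to the antipodality of $X_3$ and $X_6$ on the Brocard circle, i.e.\ that these two centers alone determine the circle uniquely; this is the standard fact recorded in the remarks following the Brocard Circle definition, and it applies verbatim to $T'$. Consequently the corollary follows immediately, with no further computation required beyond citing the two preceding lemmas.
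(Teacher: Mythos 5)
Your argument is correct and follows exactly the paper's own route: the paper likewise deduces stationarity of $\K'$ from the antipodality of $X_3'$ and $X_6'$ on $\K'$, with both centers shown stationary in Lemmas~\ref{lem:x3} and~\ref{lem:x574}. Your version merely makes explicit the center/radius bookkeeping that the paper leaves implicit.
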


This stems from the fact that stationary $X_3'$ and $X_6'$ are antipodes on $\K'$ \cite[Brocard Circle]{mw}.

\begin{corollary}
$X_{15}'$ and $X_{16}'$ are stationary.
\label{cor:x15x16p}
\end{corollary}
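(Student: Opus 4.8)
The plan is to realize $X_{15}'$ and $X_{16}'$ as objects rigidly attached to two \emph{stationary} circles, so that their invariance is inherited rather than recomputed. The structural fact I would build on is that, for \emph{any} triangle, the two isodynamic points form a pair that is simultaneously inverse with respect to the circumcircle and with respect to the Brocard circle; equivalently, $X_{15}$ and $X_{16}$ are the two limiting points of the coaxial pencil generated by these two circles, whose centers both lie on the Brocard axis. The inversive relation to the circumcircle is already recorded in the proof of Lemma~\ref{lem:x15x16}. The companion relation to the Brocard circle I would verify once from the explicit $R,\Sha$ expressions of Lemmas~\ref{lem:x15x16}, \ref{lem:x6} and \ref{lem:x3}, by checking that the signed product of the distances of $X_{15},X_{16}$ from the center $X_{182}$ of $\K$ equals its squared radius (with both points on the same side of the center). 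Since these formulas are written in the genuine invariants $R$ and $\Sha$, the identity holds for an arbitrary triangle and may therefore be transported verbatim to $T'$.

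Granting this, I would apply it to $T'$. By the very definition of the second Brocard triangle its vertices $A',B',C'$ lie on $\K$, so the circumcircle of $T'$ is $\K$, which is stationary; and by the corollary immediately preceding the statement the Brocard circle $\K'$ of $T'$ is stationary. Hence $X_{15}'$ and $X_{16}'$ are precisely the two points simultaneously inverse with respect to $\K$ and $\K'$, i.e.\ the limiting points of the coaxial pencil $\langle \K,\K'\rangle$. A coaxial pencil, together with its limiting points, is determined by any two of its members (concretely, if the two centers are at coordinates $c_1\neq c_2$ along the axis with radii $\rho_1,\rho_2$, the limiting pair $\{t,t'\}$ is the root-pair of a fixed quadratic obtained from $(t-c_i)(t'-c_i)=\rho_i^2$). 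Since $\K,\K'$ are both stationary with collinear, stationary centers on the stationary line $X_3'X_6'$ (Lemmas~\ref{lem:x3} and \ref{lem:x574}), this root-pair is stationary, so the unordered pair $\{X_{15}',X_{16}'\}$ is stationary.

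To upgrade stationarity of the pair to stationarity of each point, I would fix the labeling by a side convention along the Brocard axis (say, $X_{15}'$ is the limiting point on the prescribed side of the center of $\K'$, matching the sign of the $y$-coordinate in Lemma~\ref{lem:x15x16}); because the whole configuration is frozen, this assignment never switches and each of $X_{15}',X_{16}'$ is individually stationary. I expect the only real obstacle to be the general lemma that the isodynamic points are inverse in the Brocard circle — everything afterward is bookkeeping once $\K$ and $\K'$ are known stationary.

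A more computational alternative, closer in spirit to the proof of Lemma~\ref{lem:x15x16}, would instead write $X_{15}'=\sqrt{3}*X_3'+\Sha'*X_6'$ and $X_{16}'=\sqrt{3}*X_3'-\Sha'*X_6'$ and read each Cartesian combo as the affine average $(\sqrt{3}\,X_3'\pm\Sha'\,X_6')/(\sqrt{3}\pm\Sha')$. This collapses the claim to the single requirement that $\Sha'=\cot\omega'$ be invariant; but that requirement is essentially the second-porism statement and would have to be imported, so I would prefer the inversive argument above, which is self-contained and uses only the already-established stationarity of $\K$ and $\K'$.
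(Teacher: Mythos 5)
Your argument is correct, but it takes a genuinely different route from the paper. The paper's proof is a one-liner: it invokes the known fact (cited from ETC under X(15), X(16)) that the isodynamic points of the second Brocard triangle \emph{coincide} with those of the reference triangle, and then appeals to Lemma~\ref{lem:x15x16}. Your inversive argument replaces that catalogue lookup with the classical characterization of $X_{15},X_{16}$ as the limiting points of the coaxal pencil spanned by the circumcircle and the Brocard circle; the auxiliary fact you flag as the only real obstacle does check out from the paper's own formulas (with $s=\sqrt{\Sha^2-3}$ one finds $y_{15}-y_{182}=-R(\Sha-\sqrt3)^2/(2\Sha s)$ and $y_{16}-y_{182}=-R(\Sha+\sqrt3)^2/(2\Sha s)$, whose product is $\bigl(Rs/(2\Sha)\bigr)^2=\rho_{\K}^2$ with both points on the same side of $X_{182}$), and since the circumcircle of $T'$ is $\K$ and its Brocard circle is $\K'$, both stationary, the limiting pair is frozen. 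You were also right to reject the combo alternative: Corollary~\ref{cor:inv-w} (invariance of $\Sha'$) is itself \emph{deduced} from the present corollary, so that route would be circular. The trade-off is that your proof delivers strictly less than the paper's: it shows the pair $\{X_{15}',X_{16}'\}$ is stationary, but not that it equals $\{X_{15},X_{16}\}$ — the pencils $\langle\Gamma,\K\rangle$ and $\langle\K,\K'\rangle$ are a priori distinct, so coincidence of their limiting pairs does not follow without an extra step (e.g., showing $\Gamma,\K,\K'$ are coaxal). That stronger coincidence is exactly what Theorem~\ref{thm:nesting} later uses when it asserts the iterated isodynamic points sit at the \emph{original} $X_{15},X_{16}$, so if your proof were adopted, that step would need to be supplied separately.
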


\begin{proof}
 $X'_{15}$ (resp. $X'_{16}$) coincide with $X_{15}$ (resp. $X_{16}$) \cite[X(15) and X(16)]{etc}, shown in Lemma~\ref{lem:x15x16} to be stationary.
\end{proof}

\begin{corollary}
The $T'$ family is equibrocardal, i.e., $\omega'$ is invariant.
\label{cor:inv-w}
\end{corollary}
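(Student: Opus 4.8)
The plan is to pin down $\omega'$ through a single metric relation that \emph{any} triangle's Brocard angle must satisfy, namely the one recorded in the Corollary following Lemma~\ref{lem:x6}: for a triangle with circumradius $R$ and $\Sha=\cot\omega$, one has $|X_3-X_6| = R\sqrt{\Sha^2-3}/\Sha$. Since this formula involves only intrinsic, coordinate-free quantities, it holds verbatim for $T'$:
\[ |X_3' - X_6'| = \frac{R'\sqrt{(\Sha')^2-3}}{\Sha'}, \qquad \Sha'=\cot\omega'. \]
Thus it suffices to show that both $R'$ and $|X_3'-X_6'|$ are stationary over the porism, and then to invert this relation for $\Sha'$.

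For the two ingredients: by definition $T'$ is inscribed in the Brocard circle $\K$, so $\K$ is the circumcircle of $T'$ and $R'$ equals the radius of $\K$. Since $\K$ is stationary (its center $X_{182}$ together with the antipodal pair $X_3,X_6$ are all fixed), $R'$ is stationary; explicitly $R' = |X_3-X_6|/2 = R\sqrt{\Sha^2-3}/(2\Sha)$. Next, $X_3'=X_{182}$ and $X_6'=X_{574}$ are stationary by Lemmas~\ref{lem:x3} and \ref{lem:x574}, whence their distance $|X_3'-X_6'|$ is stationary as well.

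It then remains to conclude that $\Sha'$, and therefore $\omega'$, is determined by these stationary data. Setting $\rho := |X_3'-X_6'|/R'$, the relation reads $\rho = \sqrt{(\Sha')^2-3}/\Sha' = \sqrt{1-3/(\Sha')^2}$. The right-hand side is a strictly increasing function of $\Sha'$ on $[\sqrt 3,\infty)$ (the admissible range by Remark~\ref{rem:sha-min}), hence injective; since $\rho$ is stationary, $\Sha'$ is uniquely pinned down and therefore constant over the family. Consequently $\omega' = \operatorname{arccot}\Sha'$ is invariant.

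The main obstacle is not the algebra but the structural observation that $R'$ is stationary: everything hinges on identifying the circumcircle of $T'$ with the fixed Brocard circle $\K$. Once that identification is in hand, the argument reduces to substituting the two stationary lengths into an intrinsic Brocard-angle identity and invoking its monotonic invertibility.
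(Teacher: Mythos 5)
Your proof is correct, but it takes a genuinely different route from the paper's. The paper extracts $\Sha'$ from the barycentric combo $X_{15}' = \sqrt{3}*X_3' + \Sha'*X_6'$ of Lemma~\ref{lem:x15x16}, using the fact (Corollary~\ref{cor:x15x16p}) that $X_{15}'$ coincides with the original $X_{15}$ and is therefore stationary along with $X_3'$ and $X_6'$; the ratio in which $X_{15}'$ divides the Brocard axis of $T'$ then pins down $\Sha'$. You instead use the intrinsic identity $|X_3-X_6| = R\sqrt{\Sha^2-3}/\Sha$ applied to $T'$, together with the stationarity of $R'$ (radius of the fixed circumcircle $\K$ of $T'$) and of $X_3'=X_{182}$, $X_6'=X_{574}$ from Lemmas~\ref{lem:x3} and~\ref{lem:x574}, and then invert via monotonicity of $\Sha'\mapsto\sqrt{1-3/(\Sha')^2}$ on $[\sqrt{3},\infty)$. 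Your argument is arguably more self-contained: it does not invoke the nontrivial imported fact that the isodynamic points of the second Brocard triangle coincide with those of the reference triangle, only the stationarity of two triangle centers and a standard metric relation ($|X_3X_6|^2 = R^2(1-3\tan^2\omega)$). What the paper's route buys is economy within its own architecture --- Corollary~\ref{cor:x15x16p} is already in place and the stationary isodynamic points are the central objects of the later convergence results --- whereas your route requires explicitly justifying that the coordinate formula of Lemma~\ref{lem:x6} is a coordinate-free identity valid for an arbitrary triangle, which you do correctly. Both arguments hinge on the same final step: injectivity of a stationary quantity as a function of $\Sha'$ on the admissible range given by Remark~\ref{rem:sha-min}.
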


\begin{proof}
Plugging invariant $X_3',X_6',X_{15}'$ are stationary in the ``combo'' \eqref{eqn:combo-x15} of Lemma~\ref{cor:x15x16p} yields a unique $\Sha'$. 
\end{proof}

Let $R$ denote the circumradius of a triangle. After \cite[Second Brocard Circle]{mw}:

\begin{definition}[Second Brocard Circle] Let $\K_2$ denote the circle centered on $X_3$ through both Brocard points $\Omega_1,\Omega_2$ and with radius $R_2=R\sqrt{1-4\sin^2\omega}$. 
\end{definition}

\begin{lemma}
The second Brocard circle $\K'_2$ of the $T'$ family is stationary. 
\label{lem:broc2-circ}
\end{lemma}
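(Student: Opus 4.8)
The plan is to read off the defining data of the second Brocard circle and verify each piece is invariant. By the Definition just stated, $\K'_2$ is the circle centered at $X_3'$ with radius $R'_2 = R'\sqrt{1-4\sin^2\omega'}$, where $R'$ and $\omega'$ are the circumradius and Brocard angle of $T'$. A circle is stationary precisely when both its center and its radius are fixed, so it suffices to show that each of the three quantities $X_3'$, $\omega'$, and $R'$ is invariant over the porism.

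Two of the three are already available from earlier results. The center $X_3'$ is stationary by Lemma~\ref{lem:x3} (it coincides with the fixed point $X_{182}$), and the Brocard angle $\omega'$ — equivalently $\Sha'=\cot\omega'$ — is invariant by Corollary~\ref{cor:inv-w}. Thus the whole claim reduces to showing that the circumradius $R'$ of $T'$ does not vary.

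The key observation is that $T'$ is, by definition, inscribed in the Brocard circle $\K$ of $T$; since its three vertices are concyclic on $\K$ and (generically) non-degenerate, $\K$ is exactly the circumcircle of $T'$, so $R'$ equals the radius of $\K$. Because $X_3$ and $X_6$ are antipodal on $\K$, that radius is $|X_3-X_6|/2 = \frac{R\sqrt{\Sha^2-3}}{2\Sha}$ by the corollary to Lemma~\ref{lem:x6}. Both $R$ (the fixed porism circumradius) and $\Sha$ (fixed since $\omega$ is invariant) are constant, so $R'$ is constant. Substituting the invariant $R'$ and $\Sha'$ into the $\Sha$-form $R'_2 = R'\sqrt{(\Sha'^2-3)/(\Sha'^2+1)}$ of $R'\sqrt{1-4\sin^2\omega'}$ — using $\sin^2\omega'=1/(1+\Sha'^2)$ from Lemma~\ref{lem:ab} — shows $R'_2$ is invariant, which together with the fixed center completes the argument.

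I do not expect a serious obstacle, since the statement follows by assembling the preceding lemmas; the work is structural rather than computational. The only points warranting care are the identification of $\K$ as the circumcircle of $T'$ (one should confirm $T'$ is non-degenerate, so that its three vertices determine $\K$ uniquely) and the antipodality of $X_3,X_6$ on $\K$, which underlies the radius formula. Should either be in doubt, a direct coordinate computation of the vertices $A',B',C'$ as intersections of the symmedians with $\K$, followed by their circumradius, furnishes a fallback that bypasses these structural claims entirely.
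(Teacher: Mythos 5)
Your proposal is correct and follows essentially the same route as the paper's proof: both reduce the claim to the stationarity of the center $X_3'$, the invariance of the circumradius $R'$ (because $T'$ is inscribed in the fixed Brocard circle $\K$), and the invariance of $\omega'$ from Corollary~\ref{cor:inv-w}. You simply spell out the radius computation $R'=|X_3-X_6|/2$ in more detail than the paper does.
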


\begin{proof}
The $T'$ family is inscribed in a fixed circle (i.e., $X_3'$ is stationary and $R'$ is invariant). Since $\omega'$ is invariant (Corollary~\ref{cor:inv-w}), so is $R_2'$, and the result follows.
\end{proof}

\begin{corollary}
The Brocard midpoint $X_{39}'$ of the $T'$ is stationary.
\end{corollary}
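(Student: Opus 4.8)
The plan is to exhibit the Brocard midpoint $X_{39}'$ of $T'$ as a fixed affine combination of two points already proven stationary, weighted by a coefficient already proven invariant, so that its stationarity is immediate. I deliberately avoid characterizing $X_{39}'$ as the midpoint of the Brocard points $\Omega_1',\Omega_2'$ (or as the center of the inellipse $\E'$), since those descriptions would require the stationarity of $T'$'s Brocard points, which is exactly the deeper phenomenon not yet established at this point. Instead, the first step is to record the universal, shape-only relation tying $X_{39}$, $X_3$, and $X_6$ on the Brocard axis. Comparing the explicit coordinates of Lemma~\ref{lem:ab} and Lemma~\ref{lem:x6}, both written in the frame $X_3=[0,0]$, gives $X_{39}=\tfrac{\Sha^2}{\Sha^2+1}X_6$, that is
\[
\overrightarrow{X_3 X_{39}} \;=\; \frac{\Sha^2}{\Sha^2+1}\,\overrightarrow{X_3 X_6}.
\]
Because this is a dimensionless ratio measured along the Brocard axis, it is a similarity invariant and hence holds for \emph{any} triangle evaluated with its own Brocard cotangent; in particular it applies to $T'$ with $\Sha'$.

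Second, I would transport this relation to $T'$, whose circumcenter $X_3'$ is no longer at the origin, writing
\[
X_{39}' \;=\; X_3' + \frac{\Sha'^2}{\Sha'^2+1}\bigl(X_6'-X_3'\bigr).
\]
By Lemma~\ref{lem:x3} the circumcenter $X_3'=X_{182}$ is stationary over the porism, by Lemma~\ref{lem:x574} the symmedian point $X_6'=X_{574}$ is stationary, and by Corollary~\ref{cor:inv-w} the angle $\omega'$, hence $\Sha'$, is invariant. Every ingredient on the right-hand side is therefore fixed, and $X_{39}'$ is stationary, completing the argument. This mirrors exactly the "combo'' mechanism already used in Corollary~\ref{cor:inv-w} and in the stationarity arguments for $X_{15}',X_{16}'$.

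The main obstacle is the first step: justifying rigorously that the coefficient $\tfrac{\Sha^2}{\Sha^2+1}$ depends on the triangle's shape alone (equivalently, only on $\omega$), so that a relation read off from a triangle sitting in the original porism may legitimately be carried over to the arbitrary triangle $T'$. This rests on two facts that I would spell out: that $X_{39}$ lies on the Brocard axis $X_3X_6$, and that the mutual distances of the Brocard-axis centers $X_3,X_6,X_{15},X_{16},X_{39},\dots$ are all expressible through $R$ and $\omega$; dividing out the common scale $R$ leaves precisely the stated $\Sha$-only ratio. Once this shape-invariance is secured, the remainder is a direct substitution of the three previously established stationarity/invariance results.
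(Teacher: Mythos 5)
Your argument is correct, and it reaches the conclusion by a different (though closely related) route from the paper's. The paper quotes the ETC fact that $X_{39}$ is the inverse of $X_6$ with respect to the second Brocard circle $\K_2$, and then invokes Lemma~\ref{lem:broc2-circ} (stationarity of $\K_2'$) together with Lemma~\ref{lem:x574} (stationarity of $X_6'$) to conclude. You instead make the underlying ratio explicit: comparing Lemma~\ref{lem:ab} with Lemma~\ref{lem:x6} gives $X_{39}=\tfrac{\Sha^2}{\Sha^2+1}X_6$ in the frame $X_3=[0,0]$, i.e.\ an affine combination of $X_3$ and $X_6$ along the Brocard axis with a coefficient depending only on $\omega$; transporting this to $T'$ and substituting the stationarity of $X_3'$, $X_6'$ and the invariance of $\Sha'$ (Lemma~\ref{lem:x3}, Lemma~\ref{lem:x574}, Corollary~\ref{cor:inv-w}) finishes the proof. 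The two arguments encode the same geometric fact --- indeed the inversion in $\K_2$, whose radius squared is $R^2(\Sha^2-3)/(\Sha^2+1)$, scales $X_6$ by exactly $\Sha^2/(\Sha^2+1)$ --- but yours bypasses the second Brocard circle and Lemma~\ref{lem:broc2-circ} entirely, at the cost of having to justify that the coefficient is a similarity invariant (which you do correctly: it is a dimensionless ratio of collinear triangle centers determined by $\omega$ alone, exactly in the spirit of the combo \eqref{eqn:combo-x15} used for $X_{15}$). Your worry about not presupposing the stationarity of $\Omega_1',\Omega_2'$ is well placed and your argument indeed avoids it, consistent with the paper's ordering, where Corollary~\ref{cor:w1w2} is deduced \emph{after} this corollary.
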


\begin{proof}
$X_{39}$ is the inverse of $X_6$ with respect to $\K_2$ \cite[X(39)]{etc}. Since both $X_6'$ and $\K_2'$ are stationary (Lemmas~\ref{lem:x574} and \ref{lem:broc2-circ}), the result follows.
\end{proof}

\begin{corollary}
$\Omega_1'$, $\Omega_2'$ are stationary.
\label{cor:w1w2}
\end{corollary}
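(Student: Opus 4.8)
The plan is to realize $\Omega_1'$ and $\Omega_2'$ as the intersection of two circles already known to be stationary. By the defining property of the Brocard circle, the Brocard circle $\K'$ of $T'$ passes through both $\Omega_1'$ and $\Omega_2'$; by the defining property of the second Brocard circle, the second Brocard circle $\K_2'$ of $T'$ (centered at $X_3'$) also passes through both. Hence the unordered pair $\{\Omega_1',\Omega_2'\}$ lies in $\K'\cap\K_2'$.

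First I would invoke the two stationarity inputs already in hand: $\K'$ is stationary (the corollary following Lemma~\ref{lem:x574}) and $\K_2'$ is stationary (Lemma~\ref{lem:broc2-circ}). These two circles are genuinely distinct, since $\K'$ is centered at the midpoint of $X_3'X_6'$ whereas $\K_2'$ is centered at $X_3'$, and the two centers coincide only in the degenerate equilateral case $\Sha=\sqrt3$ (cf. Remark~\ref{rem:sha-min}). Two distinct fixed circles meet in a fixed set of at most two points, so $\{\Omega_1',\Omega_2'\}$ is constant over the porism. To upgrade this from a statement about the pair to a statement about each point, I would note that the porism is a connected one-parameter family and the Brocard points depend continuously on its parameter while staying inside the two-element set $\K'\cap\K_2'$; a continuous map from a connected domain into a discrete two-point set is constant, so $\Omega_1'$ and $\Omega_2'$ are each individually stationary.

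I do not expect a serious analytic obstacle here, as the heavy lifting was done in establishing the two circles stationary. The one point demanding care is confirming that $\K_2'$ really does pass through the Brocard points, i.e.\ $|X_3'-\Omega_{1,2}'|=R_2'$. This can be verified in the intrinsic frame of $T'$ via Proposition~\ref{prop:brocs12}: there $|X_3-\Omega_{1,2}|^2 = R^2(\Sha^2-3)/(\Sha^2+1)$, which equals $R_2^2=R^2(1-4\sin^2\omega)$ once $\sin^2\omega=1/(1+\Sha^2)$ is substituted. In fact the same proposition furnishes a self-contained alternative that sidesteps the labeling subtlety altogether: applied in the frame of $T'$, it expresses $\Omega_{1,2}'$ as explicit functions of the invariant quantities $R'$ and $\Sha'$, the stationary point $X_3'$, and the stationary Brocard axis $X_3'X_6'$ (Lemmas~\ref{lem:x3} and~\ref{lem:x574}), thereby locating each Brocard point individually and giving stationarity directly.
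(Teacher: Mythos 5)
Your proof is correct, and it takes a slightly different route than the paper. The paper pins down the pair $\{\Omega_1',\Omega_2'\}$ as the intersection of the stationary circle $\K'$ with the \emph{line} through $X_{39}'$ perpendicular to the stationary Brocard axis $X_3'X_6'$ (using the corollary that $X_{39}'$ is stationary); you instead intersect $\K'$ with the stationary \emph{circle} $\K_2'$ of Lemma~\ref{lem:broc2-circ}, which by definition passes through both Brocard points of $T'$. Your version is marginally more economical, since it uses $\K_2'$ directly rather than passing through the intermediate corollary on $X_{39}'$ (which itself rests on Lemma~\ref{lem:broc2-circ}), and your check that $|X_3-\Omega_{1,2}|^2=R^2(\Sha^2-3)/(\Sha^2+1)=R^2(1-4\sin^2\omega)$ via Proposition~\ref{prop:brocs12} correctly confirms the radius of $\K_2$. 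You also supply two things the paper elides: the observation that the two loci are genuinely distinct (their centers $X_{182}'$ and $X_3'$ coincide only in the degenerate equilateral case), and the connectedness argument upgrading stationarity of the unordered pair to stationarity of each point individually. The paper's line-based version has the minor advantage of simultaneously exhibiting $X_{39}'$ as the midpoint of $\Omega_1'\Omega_2'$, which is reused in the proof of Theorem~\ref{thm:porism} to locate the center of $\E'$; your circle-based version does not produce that byproduct but is otherwise a complete substitute.
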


\begin{proof}
These both lie on $\K'$ and their join is perpendicular to the Brocard axis $X_3'X_6'$, intersecting it at $X_{39}'$ \cite[Brocard Circle]{mw}. Since all stationary, the result follows.
\end{proof}

\noindent The following results are used to  entail Theorem~\ref{thm:nesting}.

\begin{lemma}
$X_6'$ (equivalent to $X_{574}$) is interior to the segment $X_3'X_6$ (equivalent to $X_{182}X_6$).
\end{lemma}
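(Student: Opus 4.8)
The plan is to read the three points' coordinates off the explicit formulas already established and reduce the geometric claim to a pair of scalar inequalities in $\Sha$. By Lemma~\ref{lem:x3} we have $X_3'=X_{182}=[0,-R\sqrt{\Sha^2-3}/(2\Sha)]$, by Lemma~\ref{lem:x6} we have $X_6=[0,-R\sqrt{\Sha^2-3}/\Sha]$, and by Lemma~\ref{lem:x574} we have $X_6'=X_{574}=[0,-R\Sha\sqrt{\Sha^2-3}/(\Sha^2+3)]$. All three have vanishing first coordinate, i.e.\ they lie on the Brocard axis $X_3X_6$ (the line $x=0$), so ``interior to the segment $X_{182}X_6$'' is simply the assertion that the ordinate of $X_{574}$ lies \emph{strictly} between those of $X_{182}$ and $X_6$.

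Next I would factor out the common scalar $-R\sqrt{\Sha^2-3}$, which is strictly negative for a nondegenerate porism (here $\Sha>\sqrt3$ sharpening Remark~\ref{rem:sha-min}, the equality $\Sha=\sqrt3$ being the degenerate equilateral limit in which all three points collapse onto $X_3$). Dividing the three ordinates by this negative factor reverses orderings and leaves the positive quantities $1/(2\Sha)$, $\Sha/(\Sha^2+3)$, and $1/\Sha$. Thus the claim is equivalent to the chain
\[
\frac{1}{2\Sha} < \frac{\Sha}{\Sha^2+3} < \frac{1}{\Sha}.
\]

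Finally I would clear the (positive) denominators in each inequality separately. The left inequality becomes $\Sha^2+3<2\Sha^2$, i.e.\ $\Sha^2>3$, which holds by Remark~\ref{rem:sha-min}; the right inequality becomes $\Sha^2<\Sha^2+3$, i.e.\ $0<3$, which is automatic. Hence the strict chain above holds, and restoring the negative common factor places the ordinate of $X_{574}$ strictly between those of $X_6$ and $X_{182}$, giving the claim. There is essentially no obstacle here beyond bookkeeping: the only points demanding care are the sign reversal induced by the negative common factor, and the observation that \emph{strict} interiority (rather than mere betweenness) requires the strict bound $\Sha>\sqrt3$, i.e.\ excluding the equilateral degeneracy.
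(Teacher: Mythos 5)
Your proposal is correct and follows essentially the same route as the paper's own proof: both read off the ordinates from Lemmas~\ref{lem:x6} and~\ref{lem:x574} together with $X_{182}=\tfrac12 X_6$, and reduce the betweenness claim to the inequality $\Sha^2>3$ from Remark~\ref{rem:sha-min}. You simply spell out the sign bookkeeping and the two scalar inequalities that the paper leaves implicit.
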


\begin{proof}
Assume $X_3$ is at the origin. An expression for $X_6$ was given in Lemma~\ref{lem:x6} and one for $X_6'$ in Lemma~\ref{lem:x574}. Noting that $X_{182}=\frac{1}{2}X_6$ and $\Sha^2\geq 3$ (Remark~\ref{rem:sha-min}) yields the result.
\end{proof}

\begin{proposition}
The Brocard circle $\K'$ of $T'$ is contained within its circumcircle, i.e., the Brocard circle $\K$ of its reference triangle.
\label{prop:containment}
\end{proposition}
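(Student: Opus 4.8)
The plan is to reduce this containment statement to an elementary comparison of two radii, exploiting the special position of the center of $\K$ relative to $\K'$. Recall that $T'$ is inscribed in $\K$, so $\K$ is the circumcircle of $T'$; its center is $X_3'=X_{182}$ (Lemma~\ref{lem:x3}) and its radius equals $|X_3-X_6|/2 = R\sqrt{\Sha^2-3}/(2\Sha)$, using the expression for $X_6$ in Lemma~\ref{lem:x6}. On the other hand, $\K'$ is the Brocard circle of $T'$, so $X_3'$ and $X_6'$ are antipodal on $\K'$; thus $\K'$ has diameter $X_3'X_6'$, center at the midpoint of $X_3'X_6'$, and radius $r'=|X_3'-X_6'|/2$. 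The key observation is that the single point $X_{182}$ plays two roles simultaneously: it is the center of $\K$ and, being an endpoint of the diameter $X_3'X_6'$, it lies on $\K'$.

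Because the center of $\K$ lies on $\K'$, the farthest point of $\K'$ from that center is the point antipodal to $X_3'$ through the center of $\K'$, at distance equal to the full diameter $2r'=|X_3'-X_6'|$. Hence $\K'$ is contained in the closed disk bounded by $\K$ if and only if $2r'$ does not exceed the radius of $\K$; no separate bound on the distance between the two centers is needed, since that distance is exactly $r'$. So the whole proposition collapses to verifying the single inequality
\[
|X_3'-X_6'|\ \le\ \frac{R\sqrt{\Sha^2-3}}{2\Sha}.
\]

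To finish, I would substitute the explicit, collinear (both on the $y$-axis) coordinates of $X_3'=X_{182}$ (Lemma~\ref{lem:x3}) and $X_6'=X_{574}$ (Lemma~\ref{lem:x574}) to obtain
\[
|X_3'-X_6'| = \frac{R\,(\Sha^2-3)^{3/2}}{2\Sha(\Sha^2+3)},
\]
after which the required inequality reduces to $(\Sha^2-3)/(\Sha^2+3)\le 1$. This holds for every admissible $\Sha$, in fact strictly, so $\K'$ is \emph{properly} contained in $\K$; the hypothesis $\Sha^2\ge 3$ from Remark~\ref{rem:sha-min} only guarantees that the radicands are nonnegative and the configuration nondegenerate. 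I anticipate no real obstacle: the only nonroutine idea is recognizing that $X_{182}$ lies on $\K'$, which removes the need to track the inter-center distance and turns the claim into the one-line radius comparison above; the remainder is a short simplification of rational expressions in $\Sha$.
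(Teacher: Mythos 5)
Your proof is correct, and it follows the same basic route as the paper --- comparing the diameter $X_3'X_6'$ of $\K'$ against $\K$ --- but it is actually more careful than the paper's own one-line argument. The paper simply asserts that since both endpoints of the diameter $X_3'X_6'$ lie inside $\K$, the circle $\K'$ is contained in $\K$; as a general implication this is false (a circle whose diameter endpoints lie in a disk need not lie in that disk). What rescues the argument is precisely the observation you single out: $X_3'=X_{182}$ is the \emph{center} of $\K$, so the farthest point of $\K'$ from that center is at distance exactly $2r'=|X_3'-X_6'|$, and containment reduces to the single inequality $|X_3'-X_6'|\le R\sqrt{\Sha^2-3}/(2\Sha)$. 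Your explicit verification via the coordinates of Lemmas~\ref{lem:x3} and~\ref{lem:x574} (equivalently, the distance formula of Lemma~\ref{lem:dX3X6}) reduces this to $(\Sha^2-3)/(\Sha^2+3)\le 1$, which is immediate; alternatively, the paper's preceding lemma ($X_6'$ interior to the segment $X_{182}X_6$, with $X_6\in\K$) gives the same bound without computation, since $X_{182}X_6$ is a radius of $\K$. Either way your write-up closes the gap the paper leaves open, and the strictness remark (proper containment for $\Sha>\sqrt{3}$, per Remark~\ref{rem:sha-min}) is also right.
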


\begin{proof}
Since $X_3'X_6'$ is a diameter of $\K'$, and both are contained within $\K$ (see Lemma~\ref{lem:x574}), the result follows. 
\end{proof}

\noindent Referring to Figure~\ref{fig:iteration}:

\begin{theorem}
The family of second Brocard triangles are 3-periodics in a new Brocard porism specified by:

\begin{align*}
R'=&   \frac{R\sqrt{\Sha^2-3}}{2\Sha} \\
X_3' =& \left[0,-R'\right] \\
\Sha' =& \frac{\Sha^2+3}{2\Sha}\\
(a',b')=& R'\left(\frac{1}{\sqrt{\Sha'^2+1}},\frac{2}{\Sha'^2+1}\right) \\
\Omega_1'=& \Omega_2\left(R',\Sha'\right) +  X_3' \\
\Omega_2'=&\Omega_1\left(R',\Sha'\right) +  X_3' \\
X_{182}'=&\left[ 0, - \,\frac { 3R'\left(   {\Sha}^{2}+1 \right) }{4\,\Sha'\,\Sha \, }\right]
\end{align*}

\noindent where  $\Sha'=\cot\omega'$ and $\Omega_i(R,\Sha)$,  $i=1,2$ are as in Proposition~\ref{prop:brocs12}.
\label{thm:porism}
\end{theorem}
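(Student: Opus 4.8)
The plan is to verify each listed quantity by substituting the explicit coordinates of the stationary centers of $T'$ established in the previous lemmas, the only genuinely new ingredient being the Brocard angle $\Sha'$. First I would settle the porism claim itself: $T'$ is inscribed in the fixed circle $\K$ by construction, its Brocard angle is invariant (Corollary~\ref{cor:inv-w}), and its Brocard points are stationary (Corollary~\ref{cor:w1w2}); hence its Brocard inellipse $\E'$ is a fixed conic tangent to the sides of every member of the family. A single $T'$ is then a triangle inscribed in $\K$ and circumscribing $\E'$, so Poncelet's closure theorem promotes this to a one-parameter porism of which the $T'$ are members.

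Next I would read off the circle data. The circumcircle of $T'$ is $\K$, whose diameter is $X_3 X_6$, so $R' = \tfrac12|X_3 - X_6|$ and $X_3' = X_{182}$ follow at once from Lemmas~\ref{lem:x6} and \ref{lem:x3}, yielding $R' = R\sqrt{\Sha^2-3}/(2\Sha)$ and $X_3' = [0,-R']$. Once $R'$ and $\Sha'$ are known, the axes $(a',b')$ are simply Lemma~\ref{lem:ab} applied to $T'$, while $X_{182}'$ is the midpoint of the diameter $X_3' X_6'$ of $\K'$, evaluated from Lemmas~\ref{lem:x3} and \ref{lem:x574}; clearing denominators exposes the common factor $\Sha^2+1$ and returns the stated expression.

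The crux is $\Sha'$. I would apply the isodynamic combo \eqref{eqn:combo-x15} to $T'$, which in Cartesian form (normalizing the barycentric weights, whose sum is $\sqrt3+\Sha'$) reads $X_{15}' = (\sqrt3\,X_3' + \Sha'\,X_6')/(\sqrt3+\Sha')$. Since $X_{15}' = X_{15}$ (Corollary~\ref{cor:x15x16p}) and all four points lie on the $y$-axis, this is a single linear equation in $\Sha'$; inserting the coordinates from Lemmas~\ref{lem:x15x16}, \ref{lem:x3} and \ref{lem:x574} and solving expresses $\Sha'$ as a ratio in which both numerator and denominator factor through $(\Sha-\sqrt3)^2$, and after cancellation one is left with $\Sha' = (\Sha^2+3)/(2\Sha)$. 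I expect this factorization to be the main obstacle, since it is precisely where the self-similar recurrence originates; a useful sanity check is that $\Sha\ge\sqrt3$ forces $\Sha'\ge\sqrt3$, so the new family is again a legitimate Brocard porism.

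Finally the Brocard points. As $\E'$ is the Brocard inellipse of $T'$ with stationary center $X_{39}'$, Proposition~\ref{prop:brocs12} applied to $(R',\Sha')$ and translated by the circumcenter $X_3'$ places the two foci at $\Omega_{1,2}(R',\Sha') + X_3'$. The remaining subtlety is the labeling: the cyclic orientation of the vertices of $T'$ is reversed relative to $T$, which interchanges the two Brocard points, so $\Omega_1' = \Omega_2(R',\Sha') + X_3'$ and $\Omega_2' = \Omega_1(R',\Sha') + X_3'$ rather than the index-preserving assignment. Pinning down this orientation reversal — for instance by comparing the sign of the $x$-coordinate of a focus against an explicitly computed vertex of $T'$ — is the one step that requires genuine geometry rather than bookkeeping.
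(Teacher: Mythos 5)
Your proposal is correct and follows essentially the same route as the paper: the porism structure comes from the stationary Brocard points and Brocard circle of $T'$, the axes $(a',b')$ from Lemma~\ref{lem:ab} applied with $(R',\omega')$, and $\Sha'$ from the isodynamic combo \eqref{eqn:combo-x15} exactly as in the paper's Corollary~\ref{cor:inv-w}. You are in fact more explicit than the paper on two points its terse proof leaves silent --- the $(\Sha-\sqrt{3})^2$ cancellation when solving the combo equation for $\Sha'$, and the orientation-induced swap $\Omega_1'\leftrightarrow\Omega_2'$ --- and both of these check out.
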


\begin{proof}
In a general triangle (see Figure~\ref{fig:broc-por-sec-tri}), the major (resp. minor) axes of the Brocard Inellipse are oriented along $\Omega_1\Omega_2$ (resp. the Brocard axis $X_3 X_6$) and its center is the Brocard midpoint $X_{39}$ \cite[Brocard Inellipse]{mw} . Since the Brocard points $\Omega_1'$, $\Omega_2'$ of $T'$ are stationary (Corollary~\ref{cor:w1w2}), the center of $\E'$ is stationary center. Since $X_3$ and $X_6$ are stationary antipodes of $\K$, $\E'$ is axis-aligned with $\E$. Plug invariant $R_2$ and $\omega'$ into the equations in Lemma~\ref{lem:ab} and obtain invariant $(a',b')$ as in the claim.
\end{proof}

Peter Moses let us know that $X_{182}'$ is none other than $X_{39498}$ \cite{moses2020-private-brocard}.

\begin{remark}
The upper vertex of the inellipse of Brocard $\E'$ is at the Brocard midpoint $X_{39}=X_{39}'+[0,b']$.
\end{remark}

\begin{corollary}
The semi-axes of $\E'$ can be expressed in terms of those of $\E$ as follows:
\[ [a',b']= \left[{\frac {a\sqrt {{a}^{2}-{b}^{2}}}{\sqrt {{a}^{2}+2\,{b}^{2}}}},{
\frac {b\sqrt {{a}^{2}-{b}^{2}}\sqrt {4\,{a}^{2}-{b}^{2}}}{{a}^{2}+2\,
{b}^{2}}}\right]\]
\end{corollary}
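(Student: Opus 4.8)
The plan is to treat this as a two-stage substitution. Theorem~\ref{thm:porism} already supplies $(a',b')$ in terms of the primed porism data $R'$ and $\Sha'$, and it expresses $R'$ and $\Sha'$ in terms of the unprimed $R$ and $\Sha$. Separately, Proposition~\ref{prop:wRab} inverts the relationship between $(R,\Sha)$ and $(a,b)$. So I would simply compose these maps: first eliminate $R'$ and $\Sha'$ to write $(a',b')$ purely in terms of $R$ and $\Sha$, then apply Proposition~\ref{prop:wRab} to trade $R$ and $\Sha$ for $a$ and $b$.

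For the first stage, the only genuine computation is the quantity $\Sha'^2+1$ sitting in the denominators of $a'$ and $b'$. Using $\Sha'=(\Sha^2+3)/(2\Sha)$ one finds
\[
\Sha'^2+1=\frac{(\Sha^2+3)^2+4\Sha^2}{4\Sha^2}=\frac{\Sha^4+10\Sha^2+9}{4\Sha^2}=\frac{(\Sha^2+1)(\Sha^2+9)}{4\Sha^2}.
\]
The clean factorisation $\Sha^4+10\Sha^2+9=(\Sha^2+1)(\Sha^2+9)$ is what makes the nested square roots collapse. Feeding this back into the formulas of Theorem~\ref{thm:porism}, together with $R'=R\sqrt{\Sha^2-3}/(2\Sha)$, the stray factors of $\Sha$ cancel and I expect the intermediate forms
\[
a'=\frac{R\sqrt{\Sha^2-3}}{\sqrt{(\Sha^2+1)(\Sha^2+9)}},\qquad b'=\frac{4R\Sha\sqrt{\Sha^2-3}}{(\Sha^2+1)(\Sha^2+9)}.
\]

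For the second stage I substitute $R=2a^2/b$ and $\Sha=\sqrt{4a^2-b^2}/b$ from Proposition~\ref{prop:wRab}. The individual blocks reduce tidily: $\Sha^2-3=4(a^2-b^2)/b^2$, $\Sha^2+1=4a^2/b^2$, and $\Sha^2+9=4(a^2+2b^2)/b^2$. One must track signs when extracting roots, but Remark~\ref{rem:sha-min} gives $\Sha\geq\sqrt3$, whence $\Sha^2-3\geq0$ and $a\geq b>0$ for the inellipse, so every radicand is nonnegative and each root is taken on its positive branch. Collecting the powers of $b$ in numerator and denominator then yields precisely the claimed $a'$ and $b'$.

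The main obstacle is purely bookkeeping: the expressions carry three nested radicals ($\sqrt{\Sha^2-3}$, and after substitution $\sqrt{a^2-b^2}$ and $\sqrt{4a^2-b^2}$) together with several competing powers of $b$, so the real risk is an arithmetic slip rather than any conceptual gap. The factorisation $(\Sha^2+1)(\Sha^2+9)$ is the single step that must be spotted; once it is in hand, both stages are mechanical.
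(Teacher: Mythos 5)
Your proposal is correct and is essentially the paper's own argument: the paper's proof is a one-line citation of Theorem~\ref{thm:porism} and Lemma~\ref{lem:ab} (from which Proposition~\ref{prop:wRab} follows directly), and you have simply carried out the intended substitution, with the factorisation $\Sha^4+10\Sha^2+9=(\Sha^2+1)(\Sha^2+9)$ and the reductions $\Sha^2-3=4(a^2-b^2)/b^2$, $\Sha^2+1=4a^2/b^2$, $\Sha^2+9=4(a^2+2b^2)/b^2$ all checking out. No gaps.
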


\begin{proof} This follows from  Theorem \ref{thm:porism} and Lemma~\ref{lem:ab}.
\end{proof}

\section{An infinite sequence of porisms}
\label{sec:porism-seq}

\noindent Referring to Figure~\ref{fig:russian-dolls}:

\begin{theorem}
Recursive calculation of the second Brocard triangle produces an infinite sequence of Brocard porisms $\B',\B'',\B''',...$ such that:
\begin{itemize}
    \item The isodynamic points are stationary at the original $X_{15}$ and $X_{16}$.
\item The Brocard circle of each new porism is contained within the Brocard circle of its parent, forming an infinite nesting.
\item Both the circumradius $R$ and the eccentricity $\varepsilon$ of the inellipse decrease monotonically.
\item The Brocard angle $\omega$ increases monotonically and converges to $\pi/6$ (i.e., triangles approach equilaterals).
\item The sequence of porisms converges to $X_{15}$.
\end{itemize}
\label{thm:nesting}
\end{theorem}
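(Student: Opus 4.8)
The plan is to reduce the entire theorem to the single scalar recurrence for $\Sha=\cot\omega$ supplied by Theorem~\ref{thm:porism}, namely
\[
\Sha' = \frac{\Sha^2 + 3}{2\Sha} = \frac12\left(\Sha + \frac{3}{\Sha}\right),
\]
which is precisely Newton's iteration for the root of $x^2-3=0$. First I would record two algebraic identities: $\Sha' - \sqrt3 = (\Sha - \sqrt3)^2/(2\Sha)$ and $\Sha' - \Sha = (3 - \Sha^2)/(2\Sha)$. By Remark~\ref{rem:sha-min} we start with $\Sha_0 \geq \sqrt3$; the first identity shows $\Sha_n > \sqrt3$ for all $n$ (so every iterate is a genuine, non-degenerate porism and $\sqrt{\Sha_n^2-3}$ stays real), while the second shows $\Sha_{n+1} < \Sha_n$. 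Hence $\{\Sha_n\}$ is decreasing and bounded below by $\sqrt3$, so it converges, and passing to the limit in the recurrence forces the limit to be the fixed point $\sqrt3$. Since $\omega=\operatorname{arccot}\Sha$ is a decreasing function of $\Sha$, this gives the Brocard-angle claim: $\omega_n$ increases monotonically to $\operatorname{arccot}\sqrt3 = \pi/6$.

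The monotone statements about $R$ and $\varepsilon$ fall out of the same recurrence. For the circumradius, $R' = R\sqrt{\Sha^2-3}/(2\Sha) < R/2$ because $\sqrt{\Sha^2-3} < \Sha$; thus $R_n$ decreases, and in fact $R_n\to 0$ faster than geometrically with ratio $\tfrac12$. For the eccentricity I would use $a=R\sin\omega$ and $b=2R\sin^2\omega$ from Lemma~\ref{lem:ab} to write $\varepsilon^2 = 1-(b/a)^2 = 1-4\sin^2\omega = (\Sha^2-3)/(\Sha^2+1)$, an increasing function of $\Sha$. Since $\Sha_n\downarrow\sqrt3$, the eccentricity $\varepsilon_n$ decreases monotonically to $0$, so the inellipses round out, consistent with the triangles approaching equilaterals.

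The stationarity of the isodynamic points and the infinite nesting require no new computation: they are two previously established per-step facts promoted to the whole sequence by induction. Corollary~\ref{cor:x15x16p} states that the second Brocard triangle shares the isodynamic points of its reference triangle, so $X_{15}$ and $X_{16}$ are literally unchanged at each step and hence common to the entire tower $\B',\B'',\dots$. Proposition~\ref{prop:containment} states that each new Brocard circle lies inside its parent's (which is the circumcircle of the next family), so by induction $\K \supset \K' \supset \K'' \supset \cdots$, giving the claimed infinite nesting.

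For the convergence claim I would combine the nesting with $R_n\to 0$. The circumcircles of the successive families are exactly the nested Brocard circles, a decreasing sequence of closed disks whose radii tend to $0$, so by the Cantor intersection property their intersection is a single point. It remains to identify that point as $X_{15}$. Since $X_{15}$ is the common first isodynamic point of every family, and the first isodynamic point lies strictly inside the circumcircle --- explicitly, from Lemma~\ref{lem:x15x16} its distance to $X_3$ equals $R\sqrt{(\Sha-\sqrt3)/(\Sha+\sqrt3)} < R$ --- the point $X_{15}$ belongs to every disk, hence to the intersection, which is therefore $\{X_{15}\}$. The main obstacle is this last step: everything preceding it is one-line monotonicity, but pinning the limit to the specific center $X_{15}$ rather than to merely ``some point'' requires both the shared-isodynamic-point bookkeeping and the verification that $X_{15}$ sits inside every circumcircle. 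A minor wrinkle to watch is that the formulas in Lemma~\ref{lem:x15x16} assume $X_3$ at the origin, whereas the recursed circumcenter is $X_3'=[0,-R']$, so the identification must be read off at the level of actual plane points as in Corollary~\ref{cor:x15x16p}.
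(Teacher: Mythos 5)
Your proposal is correct and follows the same decomposition as the paper: Corollary~\ref{cor:x15x16p} for the fixed isodynamic points, Proposition~\ref{prop:containment} for the nesting, and the recurrence $\Sha'=(\Sha^2+3)/(2\Sha)$ from Theorem~\ref{thm:porism} for the monotonicity and convergence claims (the latter analysis is essentially Proposition~\ref{prop:conjunto_limite} in the appendix, which you rederive via the Newton-iteration identities). Your explicit identification of the limit point as $X_{15}$ --- Cantor intersection of the nested disks plus the bound $|X_{15}-X_3|=R\sqrt{(\Sha-\sqrt3)/(\Sha+\sqrt3)}<R$ --- is a detail the paper leaves implicit, and is a worthwhile addition.
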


\begin{proof}
Corollary~\ref{cor:x15x16p} entails fixed isodynamic points. Proposition~\ref{prop:containment} entails Brocard circle infinite nesting. The other results stem from recursive application of relations in Theorem~\ref{thm:porism}.
\end{proof}

\noindent Defined in \cite[p. 78]{morley54}, and cited in  \cite[P(2)]{etc-bicentric}:

\begin{definition}[Beltrami Points]
Denoted $P_2$ and $U_2$, these are the inverses with respect to the circumcircle of the Brocard points. They lie on the Beltrami (or Lemoine) axis $L_{563}$, parallel to a line through the Brocard points \cite[Central Lines]{etc}.
\end{definition}

\noindent Introduced in \cite{gibert2020-anti-brocard}:

\begin{definition}(Anti second Brocard triangle)
Given a triangle $T$, its anti second Brocard triangle $T^*$ is a triangle whose second Brocard is $T$. 
\end{definition}

\begin{proposition}
Recursive calculation of the anti second Brocard triangle produces an infinite sequence of Brocard porisms $\B^*,\B^{**},\B^{***},...$ such that:
\begin{itemize}
    \item The isodynamic points remain stationary at the original $X_{15}$ and $X_{16}$.
\item The Brocard circle of each new porism is exterior to the previous one, forming a reverse infinite nesting.
\item The sequence of porisms converges to segment $P_2 U_2$, i.e:
\begin{itemize}
\item The inellipse major (resp. minor) semi-axis converges to $|P_2 U_2|$ (resp. 0).
\item The circumradius $R$ monotonically increases and converges to infinity.
\item The Brocard angle $\omega$ decreases monotonically to zero.
\end{itemize}
\end{itemize}
\label{prop:anti-nesting}
\end{proposition}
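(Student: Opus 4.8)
The plan is to treat $T^*$ as the formal inverse of the second-Brocard operator and to run every relation of Theorem~\ref{thm:porism} backwards. Writing starred quantities for $T^*$ and unstarred for $T$ (so that $T$ is the second Brocard triangle of $T^*$), the map $\Sha'=(\Sha^2+3)/(2\Sha)$ of Theorem~\ref{thm:porism} becomes the quadratic $(\Sha^*)^2-2\Sha\,\Sha^*+3=0$, whose two roots multiply to $3$. Remark~\ref{rem:sha-min} ($\Sha\geq\sqrt3$) forces the choice $\Sha^*=\Sha+\sqrt{\Sha^2-3}$, since the companion root $\Sha-\sqrt{\Sha^2-3}=3/\Sha^*<\sqrt3$ is inadmissible; thus the anti-second-Brocard operation is \emph{uniquely} determined on porism data. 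Inverting the radius relation $R'=R\sqrt{\Sha^2-3}/(2\Sha)$ the same way gives $R^*=2\Sha^*R/\sqrt{(\Sha^*)^2-3}$.

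First I would dispatch the structural bullets. The identity $X_{15}(T)=X_{15}(T')$, $X_{16}(T)=X_{16}(T')$ used in Corollary~\ref{cor:x15x16p} is symmetric in $T$ and its second Brocard triangle; applying it to the pair $(T^*,T)$ shows the isodynamic points of $T^*$ equal those of $T$, so they stay frozen at the original $X_{15},X_{16}$ along the whole reverse sequence. The exterior-nesting bullet is the contrapositive of Proposition~\ref{prop:containment}: since the Brocard circle of a triangle lies inside that of its parent, the circle $\K$ lies inside $\K^*$, and iterating produces an increasing Russian-doll tower. Monotonicity is immediate from the inverted recursions: $\Sha^*=\Sha+\sqrt{\Sha^2-3}>\Sha$, so $\Sha_n\uparrow\infty$ and $\omega_n=\operatorname{arccot}\Sha_n\downarrow 0$; and $R^*=2\Sha^*R/\sqrt{(\Sha^*)^2-3}>R$ (the prefactor $2\Sha^*/\sqrt{(\Sha^*)^2-3}$ exceeds $2$), so $R_n\uparrow\infty$.

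The substance is convergence to the Beltrami segment. By Lemma~\ref{lem:ab} the semi-axes are $a=R/\sqrt{1+\Sha^2}$ and $b=2R/(1+\Sha^2)$, so $b/a=2/\sqrt{1+\Sha^2}\to0$ and $\varepsilon^2=1-(b/a)^2\to1$: the inellipse degenerates to a segment. To identify that segment I would invert each Brocard point through its circumcircle to obtain the Beltrami points $P_2,U_2$; from Proposition~\ref{prop:brocs12} one gets $|P_2-U_2|=2R/\sqrt{\Sha^2-3}$, and substituting the one-line consequence $\Sha_{n+1}^2-3=2\Sha_{n+1}\sqrt{\Sha_n^2-3}$ of the recursion shows this length is \emph{invariant} along the sequence. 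Finally $|\Omega_i|/R=\sqrt{\Sha^2-3}/\sqrt{\Sha^2+1}\to1$, i.e.\ the Brocard points (the foci of $\E$) approach the circumcircle and hence coincide in the limit with their own circumcircle-inverses $P_2,U_2$; together with the fixed axis direction $\Omega_1\Omega_2$ and convergence of the inellipse center $X_{39}$, this pins the degenerate limit to the fixed chord $P_2U_2$, with the minor semi-axis $\to0$ and the major axis $\to|P_2U_2|$.

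The main obstacle is precisely this last identification: verifying that the shrinking-minor-axis ellipses converge \emph{to the specific fixed segment} $P_2U_2$, not merely to some segment of the correct length. This needs the Beltrami points themselves (not just their separation) to be stationary under the iteration, which I would secure either from the invariance of $|P_2U_2|$ combined with convergence of $X_{39}$ in the common frame of Theorem~\ref{thm:porism}, or by a direct bicentric-pair computation paralleling the isodynamic argument of Corollary~\ref{cor:x15x16p}. The remaining monotonicity, nesting, and degeneration claims then follow routinely from the inverted recursions.
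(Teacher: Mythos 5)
Your proposal is correct and follows essentially the same route as the paper: the published proof inverts the recursion of Theorem~\ref{thm:porism} and delegates the asymptotics to Proposition~\ref{prop:conjunto_limite}, whose inverse map is exactly your $\Sha^{*}=\Sha+\sqrt{\Sha^{2}-3}$, $R^{*}=2\Sha^{*}R/\sqrt{(\Sha^{*})^{2}-3}$ (the printed formula for $f^{-1}$ has its components permuted and a stray coefficient, but the intent agrees with yours). Where you go beyond the paper is in identifying the limit: the paper only gestures at Proposition~\ref{prop:orbita} and Figure~\ref{fig:russian-dolls-10}, whereas you reduce it to the invariance of $R/\sqrt{\Sha^{2}-3}$ along the sequence together with the fact that the foci approach the circumcircle and hence their own circumcircle-inverses. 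The gap you flag --- that $P_2,U_2$ themselves, and not merely their separation, are stationary --- is genuine (the paper never proves it either, though Theorem~\ref{thm:concyclic} presupposes it), but it closes in one line with the very data you already have: in the frame of $T$ one has $X_3^{*}=[0,R]$, the invariant $k=R/\sqrt{\Sha^{2}-3}=R^{*}/\sqrt{(\Sha^{*})^{2}-3}$ fixes the $x$-coordinates of $P_2,U_2$, and the $y$-coordinates match because $k(\Sha^{*}-\Sha)=k\sqrt{\Sha^{2}-3}=R$ exactly cancels the frame shift, so $[-k,-k\Sha^{*}]+[0,R]=[-k,-k\Sha]$. One small point in your favor: as your computation in effect shows, the semi-major axis converges to $|P_2U_2|/2$ (so the full major axis converges to $|P_2U_2|$); the statement's ``major semi-axis converges to $|P_2U_2|$'' is off by a factor of two, and your phrasing is the geometrically correct one.
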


\begin{proof}
The result follows applying Theorem \ref{thm:porism}  and Propositions \ref{prop:orbita} and \ref{prop:conjunto_limite}  observing that we can invert the process of recurrence taking the sequence of second anti Brocard triangles  $T^*$, $T^{**},\ldots,$ as   isosceles triangles tangent to the Brocard innelipse as shown in Fig. \ref{fig:russian-dolls-10}.
\end{proof}

\begin{definition}[Beltrami Circles]
Given a triangle, let $\Cm_1$ (resp. $\Cm_2$) denote the first (resp. second) Beltrami circle, passing through the first $P_2$ (resp. second $U_2$) Beltrami point, containing their circumcircle inverse $\Omega_1$ (resp. $\Omega_2$).
\end{definition}

\begin{lemma} The the Beltrami circles $\Cm_1$ and $\Cm_2$ are centered on
\[ P_2,U_2=\left[\mp\frac{R}{\sqrt{\Sha^2-3}}, -\frac{R\Sha}{\sqrt{\Sha^2-3}}\right]
\]
\label{lem:beltrami}
\end{lemma}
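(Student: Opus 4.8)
The plan is to reduce the statement to a single circle inversion. By the Beltrami-points definition, $P_2,U_2$ are the inverses of the Brocard points $\Omega_1,\Omega_2$ with respect to the circumcircle, and the lemma identifies the centers of the Beltrami circles $\Cm_1,\Cm_2$ with exactly these two points; so it suffices to compute these two inverses. First I would assemble the data already in hand: the circumcircle is centered at $X_3=(0,0)$ with radius $R$, and the Brocard points are given in Proposition~\ref{prop:brocs12} as $\Omega_{1,2}=\frac{R\sqrt{\Sha^2-3}}{\Sha^2+1}[\pm 1,-\Sha]$. Inversion in a circle of radius $R$ centered at the origin acts by $P\mapsto (R^2/|P|^2)\,P$, so the entire computation amounts to evaluating $|\Omega_i|^2$ and then multiplying by a scalar.

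The central step is the norm computation. Since $|[\pm1,-\Sha]|^2=1+\Sha^2$, I obtain $|\Omega_i|^2=\frac{R^2(\Sha^2-3)}{(\Sha^2+1)^2}(\Sha^2+1)=\frac{R^2(\Sha^2-3)}{\Sha^2+1}$, so the inversion scale factor is $R^2/|\Omega_i|^2=(\Sha^2+1)/(\Sha^2-3)$. Multiplying $\Omega_i$ by this factor cancels the $\Sha^2+1$ denominators and collapses $\sqrt{\Sha^2-3}/(\Sha^2-3)$ to $1/\sqrt{\Sha^2-3}$, producing the pair $\frac{R}{\sqrt{\Sha^2-3}}[\pm1,-\Sha]$, which is precisely the claimed expression $[\mp R/\sqrt{\Sha^2-3},\,-R\Sha/\sqrt{\Sha^2-3}]$ up to the naming of the two centers.

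I expect no serious obstacle here: the algebra is essentially a one-line simplification, and the difficulty is bookkeeping rather than substance. Two small points demand care. First, I must fix the labeling convention so that the $\mp$ pattern of the statement matches which Brocard point inverts to which Beltrami point; since the whole configuration is symmetric under $x\mapsto -x$, this choice only governs whether a given center is called $P_2$ or $U_2$. Second, I should record that $\Sha^2-3>0$ is needed for the radicals, hence for the inverses, to be finite and real; by Remark~\ref{rem:sha-min} we have $\Sha\ge\sqrt3$, with the degenerate value $\Sha=\sqrt3$ being the equilateral limit that sends the Brocard points to $X_3$ and their inverses to infinity. Finally, as a consistency check, both computed centers share the $y$-coordinate $-R\Sha/\sqrt{\Sha^2-3}$, so they lie on one horizontal line, in agreement with the stated fact that the Beltrami points sit on the Lemoine axis $L_{563}$ parallel to the line through the Brocard points.
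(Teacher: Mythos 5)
Your proposal is correct and follows essentially the same route as the paper: both identify $P_2,U_2$ as the circumcircle inverses $R^2\Omega_i/|\Omega_i|^2$ of the Brocard points from Proposition~\ref{prop:brocs12} and simplify. You merely carry out the norm computation explicitly (and flag the $\pm$ labeling and the $\Sha>\sqrt3$ condition), which the paper leaves to the reader.
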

\begin{proof} The circumcircle inverse of $\Omega_1$ is $O_1=P_2$ and that of $\Omega_2$ is $O_2=U_2$. 
Therefere, 
\[P_2=R^2\frac{\Omega_1}{|\Omega_1|^2},\;\;\; U_2=R^2\frac{\Omega_2}{|\Omega_2|^2}\]
Therefore the result follows from Proposition \ref{prop:brocs12}. 

\end{proof}

\begin{proposition}\label{prop:beltrami_circles}
The Beltrami circles intersect at $X_{15}$ and $X_{16}$ and their radii $\rho$ is equal and given by:
\[ \rho = \frac{2R}{\sqrt{\Sha^2-3}}\] 
Moreover, the triangles $X_{15} P_2 U_2$ and $X_{16} P_2 U_2$ are equilateral.
\end{proposition}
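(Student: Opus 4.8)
The plan is to reduce everything to explicit coordinate computations, since all four relevant points are already available in closed form: $P_2,U_2$ from Lemma~\ref{lem:beltrami}, $X_{15},X_{16}$ from Lemma~\ref{lem:x15x16}, and $\Omega_1,\Omega_2$ from Proposition~\ref{prop:brocs12}. The first thing I would exploit is the mirror symmetry across the $y$-axis (the Brocard axis $X_3X_6$): the centers $P_2$ and $U_2$ are reflections of one another, while $X_{15}$ and $X_{16}$ both lie on the axis itself. Consequently any distance from $P_2$ to an axis point equals the corresponding distance from $U_2$, so it suffices to carry out each calculation once, from $P_2$. To keep the algebra compact I would substitute $s=\sqrt{\Sha^2-3}$ throughout, so that $\Sha^2+1=s^2+4$ and $P_2=[-R/s,\,-R\Sha/s]$.

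To obtain the radius, recall that $\Cm_1$ is centered at $P_2$ and passes through $\Omega_1$, so $\rho=|P_2-\Omega_1|$. Writing $\Omega_1=\frac{Rs}{s^2+4}[1,-\Sha]$ and subtracting, the two coordinate differences share the denominator $s(s^2+4)$, and the squared length collapses to $\frac{4R^2}{s^2(s^2+4)^2}\,[(s^2+2)^2+4\Sha^2]$. The key simplification is the identity $(s^2+2)^2+4\Sha^2=(s^2+4)^2$, which holds because $\Sha^2=s^2+3$; this cancels the $(s^2+4)^2$ factor and leaves $\rho^2=4R^2/s^2$, i.e.\ $\rho=2R/\sqrt{\Sha^2-3}$. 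By the reflection symmetry the same value is obtained for $\Cm_2$ from $|U_2-\Omega_2|$, establishing equality of the two radii.

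For the intersection claim I would compute $|P_2-X_{15}|$ and $|P_2-X_{16}|$ directly. In both cases the $x$-difference is $-R/s$, while the $y$-differences telescope: the $\Sha$-terms cancel against the $\pm\Sha$ inside the isodynamic ordinates, leaving $\mp R\sqrt{3}/s$. Hence both squared distances equal $R^2/s^2+3R^2/s^2=4R^2/s^2=\rho^2$, so $X_{15}$ and $X_{16}$ lie on $\Cm_1$; by symmetry they also lie on $\Cm_2$, which proves the two circles meet exactly at $X_{15},X_{16}$. Finally, since $P_2$ and $U_2$ have equal ordinates and opposite abscissae $\mp R/s$, their separation is $|P_2U_2|=2R/s=\rho$. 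Thus each of the triangles $X_{15}P_2U_2$ and $X_{16}P_2U_2$ has all three sides equal to $\rho$ and is therefore equilateral. There is no genuine conceptual obstacle here, as the work is purely algebraic; the only points demanding care are bookkeeping ones: fixing the sign convention of the $\mp$ in Lemma~\ref{lem:beltrami} so that $P_2$ carries the $-R/s$ abscissa, and verifying the single cancellation identity that makes every squared distance collapse to $4R^2/s^2$.
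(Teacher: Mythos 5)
Your proposal is correct and takes essentially the same route as the paper, which simply invokes the explicit coordinates of $X_{15},X_{16}$ (Lemma~\ref{lem:x15x16}) and of $P_2,U_2$ (Lemma~\ref{lem:beltrami}) and leaves the verification implicit; your computation (the cancellation $(s^2+2)^2+4\Sha^2=(s^2+4)^2$ with $s=\sqrt{\Sha^2-3}$, and the four distances all collapsing to $2R/s$) is exactly the omitted algebra, carried out correctly.
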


\begin{proof} Follows from Lemmas \ref{lem:x15x16} and \ref{lem:beltrami}.
\end{proof}

\begin{theorem}
The sequence $\Omega_1$, $\Omega_2'$, ${\Omega_1}''$, ${\Omega_2}'''$, etc. (resp. $\Omega_2$, $\Omega_1'$, ${\Omega_2}'''$, ${\Omega_1}'''$) is concyclic on the first (resp. second) Beltrami circle.
\label{thm:concyclic}
\end{theorem}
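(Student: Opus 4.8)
The plan is to prove the stronger fact that the \emph{pair} of Beltrami circles is literally the same for every porism in the sequence; once this is in place the concyclicity is almost immediate. First I would show that the common Beltrami radius $\rho=2R/\sqrt{\Sha^2-3}$ of Proposition~\ref{prop:beltrami_circles} is invariant under the recursion of Theorem~\ref{thm:porism}. The only computation required is the identity $\sqrt{\Sha'^2-3}=(\Sha^2-3)/(2\Sha)$, obtained by squaring $\Sha'=(\Sha^2+3)/(2\Sha)$. Combined with $R'=R\sqrt{\Sha^2-3}/(2\Sha)$ this gives $\rho'=2R'/\sqrt{\Sha'^2-3}=\rho$, so by induction the Beltrami radius is the same constant $\rho$ for every porism $\B^{(k)}$ in the sequence.

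Next I would use the shared structure. Each porism is symmetric about the $y$-axis: its circumcenter lies on the $y$-axis and, by Theorem~\ref{thm:porism}, its inellipse is axis-aligned with $\E$. Hence its two Brocard points are mirror images across the $y$-axis, and its two Beltrami circles, centred at $P_2,U_2$, are likewise mirror images. By Corollary~\ref{cor:x15x16p} and Theorem~\ref{thm:nesting} the isodynamic points $X_{15},X_{16}$ are common to every porism, and by Proposition~\ref{prop:beltrami_circles} every Beltrami circle passes through both of them. A circle through the two fixed points $X_{15},X_{16}$ of fixed radius $\rho$ is one of exactly two circles, namely the $\Cm_1,\Cm_2$ of the original porism. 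Therefore for every $k$ the unordered pair of Beltrami circles of $\B^{(k)}$ equals $\{\Cm_1,\Cm_2\}$; in particular the centres $P_2,U_2$ are stationary throughout the whole sequence.

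It then remains to decide, for each $k$, which Brocard point of $\B^{(k)}$ lands on $\Cm_1$ and which on $\Cm_2$. Within a single porism this is the content of Proposition~\ref{prop:beltrami_circles} together with the Beltrami-circle definition: a one-line check that the relevant distance equals $\rho$ shows that the right-hand Brocard point lies on the circle centred at the left Beltrami point $P_2$ (that is, on $\Cm_1$), and symmetrically for the other. To propagate this through the sequence I would track the handedness of each porism. The swap $\Omega_1'=\Omega_2(R',\Sha')+X_3'$, $\Omega_2'=\Omega_1(R',\Sha')+X_3'$ of Theorem~\ref{thm:porism} moves the right-hand Brocard point to the left in one step; but applied to the already mirrored child porism it flips the side again, so the right-hand Brocard point of $\B^{(k)}$ is $\Omega_1^{(k)}$ for even $k$ and $\Omega_2^{(k)}$ for odd $k$. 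This is precisely the alternation $\Omega_1,\Omega_2',{\Omega_1}'',{\Omega_2}''',\dots$ of the statement; all these points lie on the single fixed circle $\Cm_1$, while the complementary alternation $\Omega_2,\Omega_1',{\Omega_2}'',{\Omega_1}''',\dots$ lies on $\Cm_2$.

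I expect the main obstacle to be the handedness bookkeeping of the last step: one must verify that the reflection built into the second-Brocard construction composes correctly along the sequence, since a naive reading of the swap formula would wrongly place ${\Omega_1}''$ on the opposite circle. By contrast the radius-invariance and the ``same two fixed circles'' argument are short, so the orientation tracking is where an error is most likely to creep in.
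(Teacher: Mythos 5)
Your proposal is correct in substance but follows a genuinely different route from the paper. The paper's proof is a direct computation: it uses the isosceles parametrization of Proposition~\ref{prop:orbita} and Lemma~\ref{lem:reciprocal} to write out the successive Brocard points explicitly in the variables $(d,h)$, then exhibits the equations of $\Cm_1,\Cm_2$ and checks membership by CAS. You instead prove the stronger structural fact that the \emph{pair} of Beltrami circles is invariant under the recursion --- via the identity $\sqrt{\Sha'^2-3}=(\Sha^2-3)/(2\Sha)$, which indeed gives $\rho'=2R'/\sqrt{\Sha'^2-3}=\rho$, combined with the fixed intersection points $X_{15},X_{16}$ and the fact that exactly two circles of radius $\rho>\tfrac12|X_{15}X_{16}|$ pass through two given points --- and then reduce the theorem to the single-porism statement that each generation's Brocard points lie one on each of its own Beltrami circles. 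This buys an explanation of \emph{why} the circles persist (and, as a by-product, that $P_2,U_2$ are stationary along the whole sequence), whereas the paper's computation only certifies the result. The one place your argument is under-justified is the step you yourself flag: the claim that the label-to-side assignment alternates, so that the right-hand Brocard point is $\Omega_1^{(k)}$ for even $k$ and $\Omega_2^{(k)}$ for odd $k$. As written, Theorem~\ref{thm:porism}'s swap $\Omega_1'=\Omega_2(R',\Sha')+X_3'$ applied verbatim at every generation would put $\Omega_1^{(k)}$ on the left for all $k\geq 1$ and destroy the alternation; to get it right you must argue that the theorem is stated for a parent in standard orientation, that the second-Brocard construction is similarity-equivariant and orientation-reversing, and that normalizing each mirrored child by the reflection across the Brocard axis swaps the Brocard labels, so the flips compose to give the stated alternation. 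That argument is correct but should be made explicit; the paper sidesteps it entirely by computing coordinates.
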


\begin{proof} Using Proposition  \ref{prop:orbita}, Lemma \ref{lem:reciprocal}  and Theorem \ref{thm:porism} we compute explicitly the sequence of   Brocrard points stated. It is straightforward to derive the equation of the circles. They are given by

\begin{align*}
  \Cm_1 &:  4 h^2 (3 d^2-h^2)  (x^2+y^2) -8 d h^2 \zeta  x  -4 h (3 d^2+h^2) \zeta y+(3 d^2-h^2) \zeta^2=0\\
    \Cm_2 &: 4 h^2 (3 d^2-h^2)  (x^2+y^2) + 8 d h^2 \zeta  x  -4 h (3 d^2+h^2) \zeta y+(3 d^2-h^2) \zeta^2=0\\
\end{align*}

\noindent where $\zeta=d^2+h^2$. The centers are
\[ O_{1,2}= \left[\pm\frac{d \zeta}{3d^2-h^2}, \frac{(3d^2+h^2)\zeta}{ 2h(3d^2-h^2)}\right]=\left[\pm\frac{R}{\sqrt{\Sha^2-3}} ,-\frac{R\Sha}{ \sqrt{\Sha^2-3}}\right]
\] 
The common radius $\rho$ is given by
\[\rho=\frac{2d \zeta}{3d^2-h^2}=
\frac{2R}{\sqrt{\Sha^2-3}}\]
The intersections of $\Cm_1$ and $\Cm_2$ are  triangle centers $X_{15}$, $X_{16}$ of Lemma \ref{lem:x15x16}.
\end{proof}

\noindent Using the definition in \cite[Isodynamic Points]{mw}:

\begin{definition}(Apollonius Circles)
Given a triangle, there are three circles passing through one vertex and both isodynamic points $X_{15}$ and $X_{16}$.
\end{definition}

Let $\T$ be the upright isosceles 3-periodic with half base $d$ and height $h$.

\begin{corollary}
The Beltrami circles are the first and second Apollonius circles of $\T$. The third Apollonius circle is degenerate and coincides with the Brocard axis.
\end{corollary}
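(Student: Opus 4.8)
The plan is to place $\T$ in the frame with $X_3=[0,0]$, read off its three vertices, and then invoke the paper's own characterization of an Apollonius circle as the unique circle through one vertex together with both isodynamic points $X_{15},X_{16}$; matching the two non-degenerate ones against the Beltrami circles $\Cm_1,\Cm_2$, whose Cartesian equations are already recorded in the proof of Theorem~\ref{thm:concyclic}, will give the first claim. Concretely, taking the apex on the axis of symmetry and the base symmetric about it, and imposing that all three vertices lie on the circumcircle centered at $X_3=[0,0]$, one obtains apex $V_0=\left[0,\zeta/(2h)\right]$ and base vertices $V_\pm=\left[\pm d,(d^2-h^2)/(2h)\right]$, where $\zeta=d^2+h^2$; a one-line check gives $|V_0|=|V_\pm|=\zeta/(2h)=R$, so these agree with the parametrization of Appendix~\ref{app:isosceles}.

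The degenerate circle is then immediate. By Lemma~\ref{lem:x15x16} both $X_{15}$ and $X_{16}$ lie on the $y$-axis, which (since $X_3=[0,0]$ and $X_6$ sits on the $y$-axis) is precisely the Brocard axis; the apex $V_0$ lies there too. Hence $V_0,X_{15},X_{16}$ are collinear, the ``circle'' through them degenerates to this line, and the third Apollonius circle coincides with the Brocard axis, proving the second assertion.

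For the remaining two I would substitute $V_+$ into the equation of $\Cm_1$ from Theorem~\ref{thm:concyclic}. Using $x^2+y^2=\zeta^2/(4h^2)$, the left-hand side factors as $2\zeta\bigl[(3d^2-h^2)\zeta-4d^2h^2-(3d^2+h^2)(d^2-h^2)\bigr]$, and expanding the bracket with $\zeta=d^2+h^2$ makes it vanish identically; the mirror symmetry $x\mapsto-x$ relating the equations of $\Cm_1$ and $\Cm_2$ then puts $V_-$ on $\Cm_2$. Since $\Cm_1$ (resp. $\Cm_2$) already passes through $X_{15}$ and $X_{16}$ by Proposition~\ref{prop:beltrami_circles}, and three non-collinear points determine a unique circle, the Apollonius circle through $V_+$ (resp. $V_-$) must equal $\Cm_1$ (resp. $\Cm_2$), which is the first assertion.

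The only genuine bookkeeping, and hence the main obstacle, is pinning the vertex coordinates to the same $(d,h)$-parametrization that underlies the Beltrami equations and fixing the orientation and sign conventions so that $V_+$ pairs with $\Cm_1$ rather than $\Cm_2$. Once the coordinates are fixed, the incidence $V_+\in\Cm_1$ is a single polynomial identity and the degeneracy is pure collinearity, so no real difficulty remains.
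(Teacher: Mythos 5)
Your proof is correct, and it reaches the conclusion by a genuinely different route for the central step. The paper argues conceptually: it introduces the anti-second Brocard triangle $\T^*$ of $\T$, observes that its Brocard points are the base vertices $A,B$ of $\T$, and then runs Theorem~\ref{thm:concyclic} \emph{backwards} to place those vertices on $\Cm_1$ and $\Cm_2$; the rest (a circle through one vertex and both isodynamic points is by definition an Apollonius circle; the third one degenerates because $C$, $X_{15}$, $X_{16}$ are collinear on the Brocard axis) is identical to yours. You instead establish the key incidence $V_\pm\in\Cm_{1,2}$ by direct substitution of the Appendix~\ref{app:isosceles} coordinates into the Cartesian equations of the Beltrami circles recorded in the proof of Theorem~\ref{thm:concyclic}; your factorization $2\zeta\bigl[(3d^2-h^2)\zeta-4d^2h^2-(3d^2+h^2)(d^2-h^2)\bigr]$ and its vanishing check out. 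What your computation buys is self-containment: the paper's step ``$\T^*$ has Brocard points at $A,B$'' is asserted rather than proved, whereas your polynomial identity needs nothing beyond Proposition~\ref{prop:beltrami_circles} and the three-points-determine-a-circle argument (and you correctly note the non-collinearity of $V_\pm$ with $X_{15},X_{16}$). What it loses is the structural explanation of \emph{why} the base vertices land on the Beltrami circles, namely that they are the Brocard points of the previous generation in the reverse iteration. One small discrepancy worth flagging: your computation gives $V_+=B\in\Cm_1$ and $V_-=A\in\Cm_2$, while the paper's proof states $B\in\Cm_2$ and $A\in\Cm_1$; with the explicit equations of $\Cm_{1,2}$ as printed, your pairing is the one that actually satisfies the identities (the paper has some sign/labelling wobble between $P_2,U_2$ and $O_1,O_2$ elsewhere), and in any case the set-level claim of the corollary is unaffected.
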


\begin{proof}
The anti-second Brocard $\T^*$ of $\T$ has Brocard points $\Omega_2^*,\Omega_1^*$ which coincide with vertices $B$ and $A$ of $\T$. Applying Theorem~\ref{thm:concyclic} in reverse direction, $\Omega_2^*,\Omega_1^*$ will lie each on $\Cm_2$ and $\Cm_1$, respectively. Therefore $\Cm_2$ (resp. $\Cm_1$) passes through vertex $B$ (resp. $A$) and the two stationary isodynamic points $X_{15}$, $X_{16}$ (anti-Brocards preserve these). By definition these are the first and second Apollonius circles \cite[Isodynamic Points]{mw}. The third Apollonius circle contains the two isodynamic points and vertex $C$ of $\T$. Since these are collinear on the Brocard axis, the circle is a line.
\end{proof}

\begin{proposition}
$\Cm_1$ and $\Cm_2$ are perpendicular to each Brocard circle in the sequence.
\end{proposition}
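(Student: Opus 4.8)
The plan is to sidestep coordinates as much as possible and lean on one classical fact of inversive geometry: two distinct points $P,Q$ are inverse with respect to a circle $\K$ if and only if every circle through $P$ and $Q$ is orthogonal to $\K$. The relevant direction is elementary: if $P,Q$ are inverse in $\K$, then its center $O$ is collinear with $P,Q$ and satisfies $\overline{OP}\cdot\overline{OQ}=r_\K^2$, so the power of $O$ with respect to any circle $\Cm$ through $P,Q$ equals $r_\K^2$; since that power also equals $|O-O_\Cm|^2-r_\Cm^2$, we recover exactly the orthogonality relation $|O-O_\Cm|^2=r_\K^2+r_\Cm^2$. By Theorem~\ref{thm:concyclic} (equivalently Proposition~\ref{prop:beltrami_circles}), both Beltrami circles $\Cm_1,\Cm_2$ pass through $X_{15}$ and $X_{16}$. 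Hence the whole proposition reduces to a single claim: the isodynamic points $X_{15},X_{16}$ form a mutually inverse pair with respect to every Brocard circle in the sequence.

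First I would establish that claim using the recursive structure already set up. The classical fact behind Lemma~\ref{lem:x15x16} is that, for an arbitrary triangle, $X_{15}$ and $X_{16}$ are inverse with respect to its circumcircle. Now by construction each Brocard circle $\K^{(n)}$ in the sequence is precisely the circumcircle of the next triangle family: $T'$ is inscribed in $\K$, $T''$ in $\K'$, and so on. By Corollary~\ref{cor:x15x16p} applied recursively, every triangle in the sequence has the same stationary isodynamic points $X_{15},X_{16}$. Therefore $X_{15},X_{16}$ are inverse with respect to $\K^{(n)}$ for every $n$, and the inversion lemma above immediately yields $\Cm_1\perp\K^{(n)}$ and $\Cm_2\perp\K^{(n)}$ for all $n$.

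As an independent check, I would verify the orthogonality relation algebraically for the first circle $\K=\K^{(0)}$. Writing $s=\sqrt{\Sha^2-3}$, the circle $\Cm_1$ is centered at $P_2=[\mp R/s,-R\Sha/s]$ with radius $\rho=2R/s$ (Lemma~\ref{lem:beltrami}, Proposition~\ref{prop:beltrami_circles}), and $\K$ is centered at $X_{182}=[0,-Rs/(2\Sha)]$ with radius $r_\K=Rs/(2\Sha)$ (Lemma~\ref{lem:x3}). Since the $x$-coordinate of the center enters only squared, a short computation gives
\[
|P_2-X_{182}|^2-\rho^2-r_\K^2
=\frac{R^2}{s^2}\!\left(\frac{(s^2+6)^2-s^4}{4\Sha^2}-3\right)
=\frac{R^2}{s^2}\!\left(\frac{12\Sha^2}{4\Sha^2}-3\right)=0,
\]
confirming $\Cm_1\perp\K$; the case of $\Cm_2$ follows by reflecting in the Brocard axis, which swaps $\Cm_1\leftrightarrow\Cm_2$ and fixes the $y$-axis-centered $\K$.

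The main obstacle is not analytic but organizational: one must state carefully that the Brocard circle of each porism serves as the circumcircle of the subsequent porism, so that the ``isodynamic points are inverse in the circumcircle'' property propagates uniformly along the entire infinite sequence. Once this identification is made explicit, a single invocation of the inversion lemma settles all levels at once, and the coordinate computation is needed only as corroboration.
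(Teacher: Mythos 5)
Your proof is correct, and it takes a genuinely different and more conceptual route than the paper. The paper's own proof is a bare coordinate verification: it checks the orthogonality relation for a single circle (the circumcircle $x^2+y^2-R^2=0$ of the isosceles 3-periodic $\T$) and leaves implicit how this extends to every Brocard circle in the infinite sequence. You instead reduce everything to the inversive characterization of orthogonality: since $\Cm_1$ and $\Cm_2$ both pass through $X_{15}$ and $X_{16}$ (Proposition~\ref{prop:beltrami_circles}), and since each Brocard circle $\K^{(n)}$ is by construction the circumcircle of the next family $T^{(n+1)}$, whose isodynamic points are still $X_{15},X_{16}$ (Corollary~\ref{cor:x15x16p} applied recursively), the classical fact that isodynamic points are mutually inverse in the circumcircle --- which the paper itself invokes in the proof of Lemma~\ref{lem:x15x16} --- forces every circle through $X_{15},X_{16}$ to be orthogonal to every $\K^{(n)}$. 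Your power-of-a-point derivation of the orthogonality criterion is sound, and I checked your algebraic corroboration for $\K$: with $s=\sqrt{\Sha^2-3}$ one indeed gets $(\Sha^2+3)^2-s^4=(s^2+6)^2-s^4=12\Sha^2$, so the discrepancy vanishes. What your approach buys is a uniform treatment of the entire sequence (including the original circumcircle $\Gamma$, and indeed any circle arising as a circumcircle at some level, forward or backward), together with an explanation of \emph{why} the orthogonality holds; what the paper's approach buys is brevity, at the cost of leaving the propagation along the sequence unargued. The only point worth stating explicitly in your write-up, as you yourself note, is the identification ``Brocard circle of $T^{(n)}$ $=$ circumcircle of $T^{(n+1)}$,'' which is immediate from the definition of the second Brocard triangle as inscribed in $\K$.
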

\begin{proof}
It is straightforward to verify the claim for the circumcircle of isosceles triangle $\T$, given implicitly by
\[ x^2+y^2-R^2=0, \;\; R=\frac{d^2+h^2}{2 h}.\]
\end{proof}
\begin{figure}
    \centering
    \includegraphics[width=\textwidth]{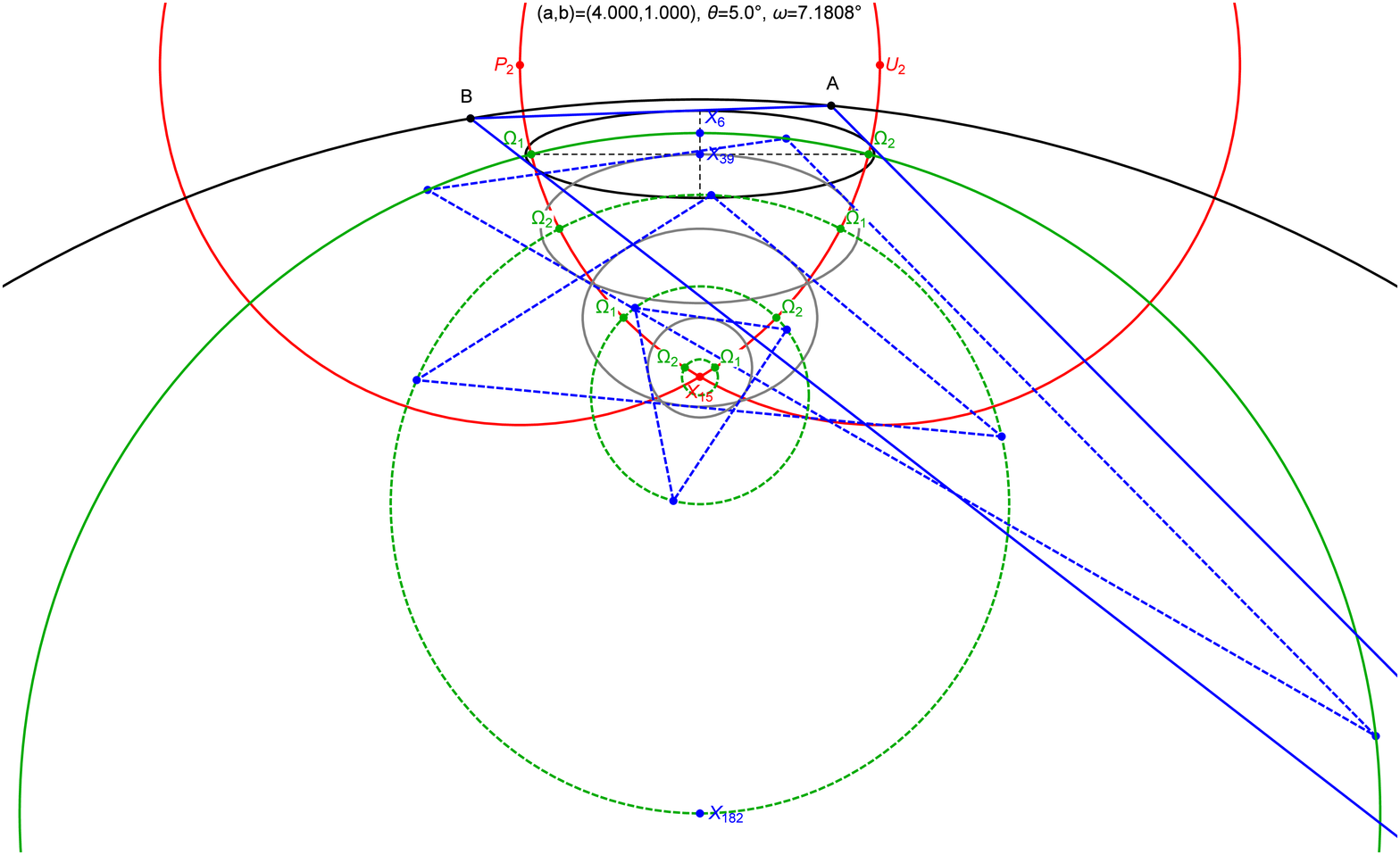}
    \caption{Three iterations of second Brocard triangles (dashed blue), each spawning its own Brocard porism. Successive Brocard points descend in alternate fashion along two circular arcs (red). These intersect at $X_{15}$ and  $X_{16}$ (above page, not shown), with centers on the Beltrami points $P_2,U_2$. The sequence of Brocard points and porisms converges to the first isodynamic point $X_{15}$, common to all porisms. At every generation the circumcircle-inellipse pair approaches a shrinking pair of concentric circles (the triangle family approaches equilaterals).  \href{https://youtu.be/Z3YlEbCFbnA}{Video}}
    \label{fig:russian-dolls}
\end{figure}

\begin{figure}[H]
    \centering
    \includegraphics[width=.9\textwidth]{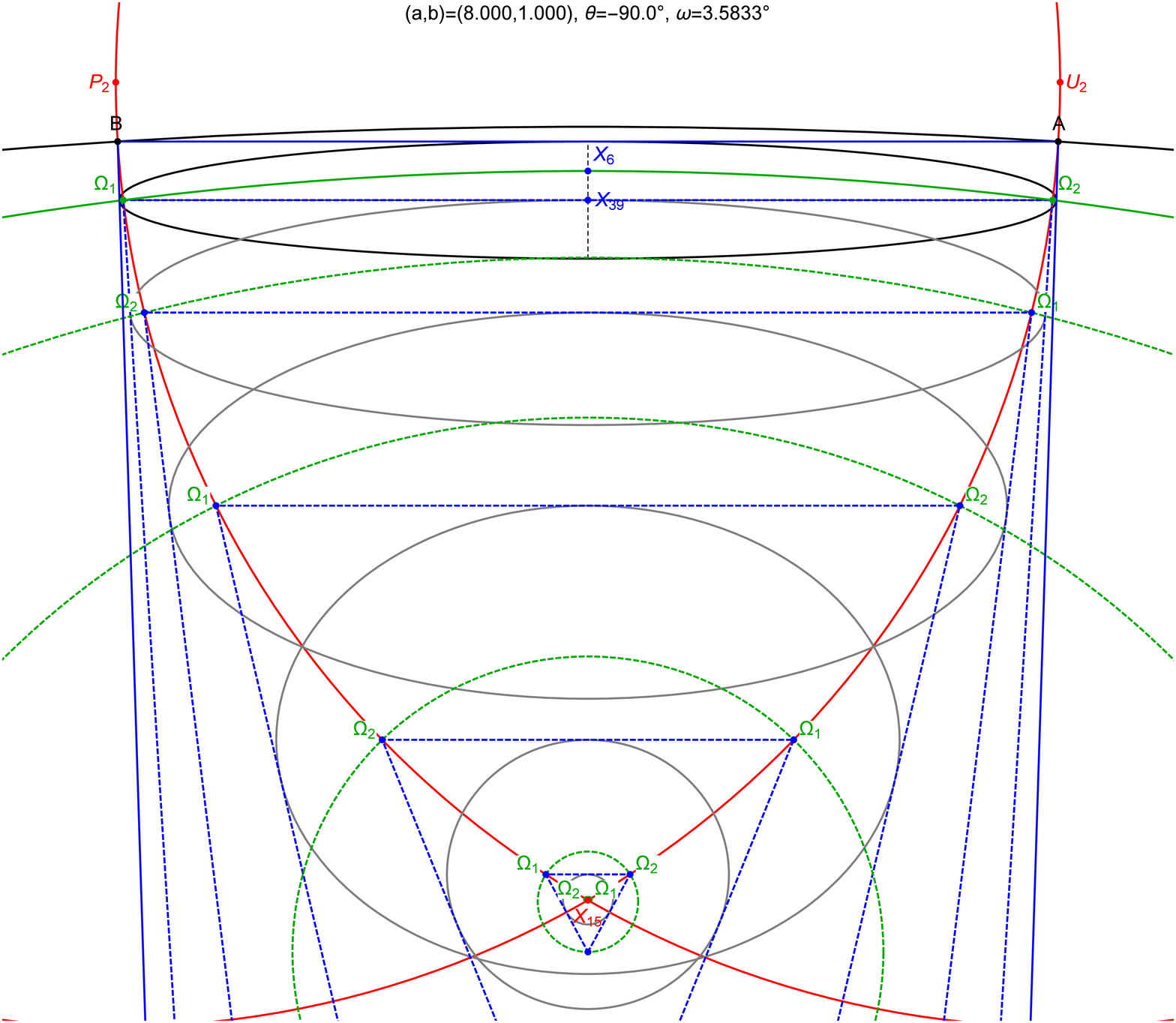}
    \caption{Sequence of porisms with successive Brocard points walking along two circular arcs (red) bounded by the Beltrami points $P_2,U_2$ and $X_{15}$. For each generation the isosceles $\T$ 3-periodic is shown (solid blue = first generation, dashed blue = subsequent ones). The bottom vertex is not shown (below the page). The base vertices $A,B$, or $A',B'$, etc., of a given $\T$ are the Brocard points of the previous generation. And that their midpoint coincides with the top vertex of the Brocard inellipse.}
    \label{fig:russian-dolls-10}
\end{figure}

\section{Embedding the discrete sequence of porisms in a continuous family}
\label{sec:continuous}
Consider a family of axis-aligned ellipses $\E_t$, $0{\leq}t{\leq}\pi/3$ centered at $O_t=[0,-\sin{t}]$ with semi-axes $a,b$ given by:

\[a=\frac{\sqrt{2\,\cos{t} -1}}{2},\;\;\;b=\frac{\sqrt{(2\cos t -1)(1-\cos{t}) }}{\sqrt{2}}\]

\noindent Referring to Figure~\ref{fig:cage}(left):

\begin{remark}
The eccentricity of $\E_t$ is given by $\varepsilon_t= \sqrt{2\cos{t}-1}$. The foci $f_{1,t}$ and $f_{2,t}$ lie each on distinct unit-radius circulars arc centered on $\Cm_1,\Cm_2=[0,\mp{1/2}]$, respectively. Namely:

\[ f_{1,t},f_{2,t}=\left[\cos{t}\pm\frac{1}{2},-\sin{t}\right] \]
\label{rem:foci}
\end{remark}

\begin{theorem}[Continuous]
There is a continuous family of Brocard porisms $\B_t=(\Gamma_t,\E_t)$ whose 1d family of triangles:

\begin{itemize}
\item is inscribed in circle $\Gamma_t$ centered on $X_{3,t}$ with radius $R_t$ given by:
     \[ X_{3,t}=\left[0,\frac{\sin t}{2(\cos t  -1)}\right],\;\;\;R_t=\sqrt{\frac { 2\,\cos t -1}{2(1-\cos t )}} \]
\item circumscribes $\E_t$, its Brocard inellipse (i.e., its Brocard points $\Omega_{1,t},\Omega_{2,t}$ are $f_{1,t}$, $f_{2,t})$ 
\item Has fixed isodynamic points $X_{15,16}$ at $[0,\mp\sqrt{3}/2]$
\item Has fixed Brocard angle $\omega_t = t/2$
\item Has Brocard circle $\K_t$ centered on $X_{182,t}=
[0, \frac{\cos t-2}{2\sin t}]$ with radius $\rho_t=
\frac{2\cos t-1}{ 2\sin t}$
\end{itemize}
\label{thm:continuous}
\end{theorem}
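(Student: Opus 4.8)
The plan is to realize the entire continuous family $\B_t$ as the isometric image of the \emph{canonical} Brocard porism of Section~\ref{sec:review} (the one with circumcenter at the origin) for a suitable choice of the intrinsic parameters $(R,\Sha)$. First I would recover those parameters from the prescribed inellipse. Applying Proposition~\ref{prop:wRab} to the given semi-axes $a=\tfrac12\sqrt{2\cos t-1}$ and $b=\sqrt{(2\cos t-1)(1-\cos t)/2}$ yields, after simplification,
\[ R=\frac{2a^2}{b}=\sqrt{\frac{2\cos t-1}{2(1-\cos t)}}=R_t,\qquad \Sha=\frac{\sqrt{4a^2-b^2}}{b}=\sqrt{\frac{1+\cos t}{1-\cos t}}=\cot\tfrac{t}{2}, \]
so the intrinsic circumradius already equals $R_t$ and the Brocard angle is $\omega_t=t/2$. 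Since $\Sha\ge\sqrt3$ holds exactly when $t\le\pi/3$ (Remark~\ref{rem:sha-min}), the stated range $0\le t\le\pi/3$ is precisely the interval on which $(R_t,\Sha_t)$ define a genuine Brocard porism, so the family is well defined throughout.

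The key to the normalization is the identity $4R_t^2=\Sha_t^2-3$, which one reads directly off the two expressions above. By Lemma~\ref{lem:x15x16} the canonical isodynamic points are then separated by $2\sqrt3\,R/\sqrt{\Sha^2-3}=\sqrt3$ and have midpoint $-\Sha/2$ on the $y$-axis; this is exactly the configuration that, after a single rigid motion, places them at $[0,\mp\sqrt3/2]$. I would therefore introduce the isometry $M$ given by reflection across the $x$-axis followed by translation by $-\Sha/2$ along $y$. The reflection is \emph{forced}: the canonical $X_{15}$ sits above $X_{16}$, whereas the theorem requires $X_{15}$ below, so an orientation reversal must precede the vertical shift.

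It then remains to check that this one $M$ carries every canonical object onto the claimed data. Using $\sqrt{\Sha^2-3}=2R$ together with $\Sha=(1+\cos t)/\sin t$, one verifies that $M$ sends the canonical circumcenter $X_3=[0,0]$ to $[0,-\Sha/2]=X_{3,t}$, the inellipse center $X_{39}$ (Lemma~\ref{lem:ab}) to $O_t=[0,-\sin t]$, the Brocard-circle center $X_{182}$ (Lemma~\ref{lem:x3}) to $X_{182,t}$, and the Brocard points of Proposition~\ref{prop:brocs12} to the foci $f_{1,t},f_{2,t}$ of $\E_t$ given in Remark~\ref{rem:foci}; the Brocard-circle radius $\tfrac12|X_3-X_6|=R^2/\Sha$ (via Lemma~\ref{lem:x6}), being invariant under $M$, equals $\rho_t$. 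Each of these is a short trigonometric identity, so no new idea is needed once $M$ is pinned down.

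Finally I would invoke the fact that the canonical configuration for $(R_t,\Sha_t)$ is a bona fide Brocard porism (Section~\ref{sec:review} and \cite{bradley2007-brocard}), and that an isometry preserves Poncelet closure together with all the metric attributes in question---circumcircle, inellipse, Brocard points as foci, Brocard circle, and isodynamic points. Hence $\B_t=M(\text{canonical})$ is a Brocard porism realizing all five bullets. The main obstacle is exactly the third step: confirming that one and the same rigid motion, carrying the correct orientation reversal, simultaneously reproduces $X_{3,t}$, $O_t$, $X_{182,t}$, $\rho_t$ and the focal pair. Getting the reflection sign right---so that the Brocard points land on the intended foci and $X_{15}$ below $X_{16}$---is the delicate bookkeeping, while everything else reduces to routine simplification.
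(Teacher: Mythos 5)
Your proposal is correct, but it takes a genuinely different route from the paper. The paper's own proof builds the porism ``from below'': it takes the isosceles triangle $\T$ with base vertices at the prescribed foci $f_{1,t},f_{2,t}$ and apex $C=[0,\tfrac{\sin t}{2(\cos t-1)}]$, reads off its porism data $(R',\Sha')$ via Lemma~\ref{lem:reciprocal}, and then \emph{inverts} the recurrence of Theorem~\ref{thm:porism} to recover the anti-Brocard family inscribed in $\Gamma_t$ --- exploiting the fact that the base vertices of each generation's isosceles triangle are the Brocard points of the previous generation. You instead work ``from above'': you recover $(R_t,\Sha_t)$ directly from the semi-axes via Proposition~\ref{prop:wRab}, isolate the normalization identity $4R_t^2=\Sha_t^2-3$ (equivalently $|X_{15}X_{16}|=\sqrt3$), and transport the canonical porism of Section~\ref{sec:review} by a single orientation-reversing rigid motion, reducing every bullet to a trigonometric identity. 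I checked the identities: the circumcenter lands at $[0,-\Sha/2]=[0,\tfrac{\sin t}{2(\cos t-1)}]$, the inellipse center at $[0,-\tfrac{2\Sha}{\Sha^2+1}]=[0,-\sin t]$, the Brocard circle center at $[0,-\tfrac{\Sha^2+3}{4\Sha}]=[0,\tfrac{\cos t-2}{2\sin t}]$ with radius $\tfrac{\Sha^2-3}{4\Sha}=\tfrac{2\cos t-1}{2\sin t}$, and the Brocard points at $[\pm(\cos t-\tfrac12),-\sin t]$, which are indeed the foci of $\E_t$ (note your computation actually corrects the sign pattern printed in Remark~\ref{rem:foci}, whose literal formula $[\cos t\pm\tfrac12,-\sin t]$ is not symmetric about the center $O_t$). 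Your approach buys self-containedness and makes the choice of the interval $[0,\pi/3]$ and the placement of the isodynamic points transparent; the paper's approach is terser and ties the continuous family directly to the discrete anti-Brocard operator, which is the theme of the section. One caveat on your ``forced reflection'': since $X_{15},X_{16}$ are genuine triangle centers they are equivariant under all isometries including reflections, so your bookkeeping is sound, but the labels $\Omega_1,\Omega_2$ form a bicentric pair and swap under the reflection --- harmless here because only the unordered pair (the foci) is asserted.
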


\begin{figure}
    \centering
    \includegraphics[width=\textwidth]{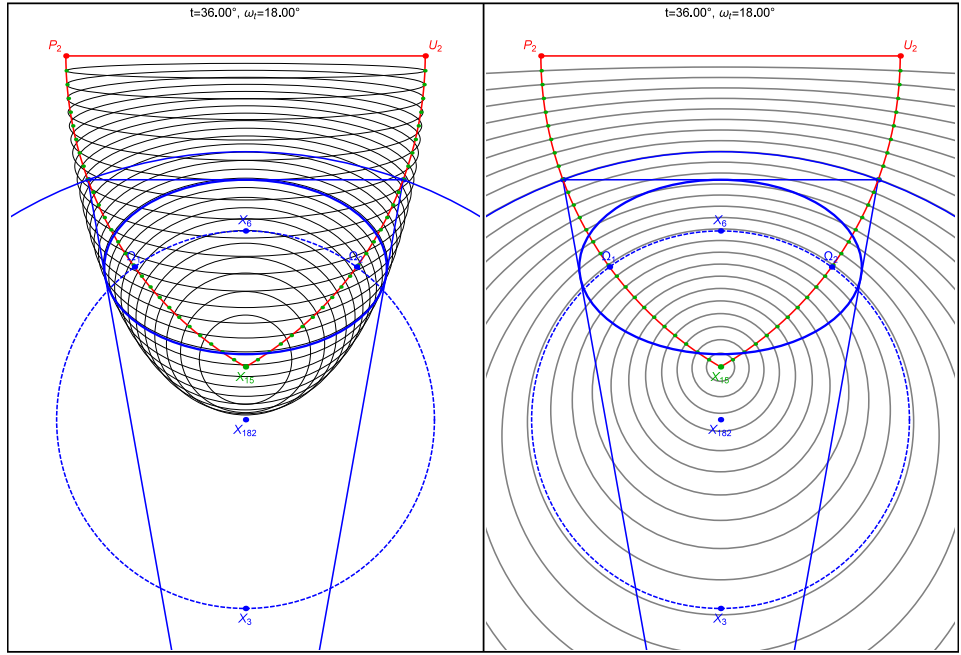}
    \caption{\textbf{Left:} A continuous family of inellipses (gray) is shown with foci (green dots) sliding along two $60^\circ$ circular arcs (red) delimited by $P_2,X_{15}$ and $U_2,X_{15}$. A particular one ($t=36^\circ$) is highlighted (thick blue), with foci $\Omega_1,\Omega_2$. A Brocard porism is formed when $\E_t$ is paired with circle $\Gamma_t$ (blue). An isosceles 3-periodic is shown (blue) interscribed in the pair. Also shown is the fixed Brocard circle (dashed blue). \textbf{Right:} The family of circles $\Gamma_t$ (gray) is nested and converges to $X_{15}$. \href{https://youtu.be/jY_8zxBljuk}{Video}}
    \label{fig:cage}
\end{figure}

\begin{proof}
Let $\T$ be the isosceles triangle defined by the vertices $A=f_1$, $B=f_2$ and $C=[0,\frac{\sin t}{2(\cos t  -1)}]$ ($t$ subscripts are omitted). Let vertex $C$ be at the intersection of the line $U_2 f_2$ with the $y$ axis. Applying Lemma \ref{lem:reciprocal} to this triangle it follows that:
\[R'=  \frac {13\,\cos t-3\,\cos 2t -8  }{4 \sin t },\Sha'=\frac {2-\cos t +2}{\sin t} \]

Invert the expressions for $R',\Sha'$ in Theorem \ref{thm:porism} to obtain the anti Brocard triangle $\T^*$ of $\T$, inscribed in $\Gamma_t$. By construction, $C=X_{3}$ of $\T^*$ and the pair $(\E_t,\Gamma_t)$ is a Brocard porism.

\end{proof}


\begin{remark}
Any Brocard porism is a similarity image of a porism $B_t$ defined in Theorem~\ref{thm:continuous}.
\end{remark}

\noindent Referring to Figure~\ref{fig:detach} (right):

\begin{corollary}[Brocard Circle Containment]
Let $s,u\in[0,\pi/3]$ and $s>u$. Then the corresponding Brocard circles $\K_s\subset\K_u$.
\end{corollary}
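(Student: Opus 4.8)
The plan is to reduce the two-dimensional containment of circles to a one-dimensional statement about vertical intervals, exploiting the fact that every $\K_t$ is centered on the $y$-axis. By Theorem~\ref{thm:continuous}, $\K_t$ has center $X_{182,t}=[0,(\cos t-2)/(2\sin t)]$ and radius $\rho_t=(2\cos t-1)/(2\sin t)$. Since the centers of $\K_s$ and $\K_u$ both lie on the $y$-axis, the distance between them is the difference of their $y$-coordinates, and the standard criterion (distance between centers plus smaller radius at most larger radius) shows that $\K_s\subset\K_u$ holds if and only if the vertical diameter of $\K_s$ is contained in that of $\K_u$. Writing $T(t)$ and $B(t)$ for the topmost and bottommost $y$-coordinates of $\K_t$, the claim is therefore equivalent to $B(u)\le B(s)$ and $T(s)\le T(u)$.

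First I would compute these two boundary functions in closed form. Adding and subtracting the radius from the center coordinate gives $T(t)=(3\cos t-3)/(2\sin t)$ and $B(t)=(-\cos t-1)/(2\sin t)$, which simplify, via $\cos t-1=-2\sin^2(t/2)$, $\cos t+1=2\cos^2(t/2)$, and $\sin t=2\sin(t/2)\cos(t/2)$, to $T(t)=-\tfrac32\tan(t/2)$ and $B(t)=-\tfrac12\cot(t/2)$. This half-angle simplification is the only computational step, and it is routine.

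With these expressions in hand, the result follows from elementary monotonicity. For $t\in(0,\pi/3]$ the argument $t/2$ lies in $(0,\pi/6]\subset(0,\pi/2)$, where $\tan$ is strictly increasing and $\cot$ is strictly decreasing; hence $T(t)=-\tfrac32\tan(t/2)$ is strictly decreasing and $B(t)=-\tfrac12\cot(t/2)$ is strictly increasing in $t$. Consequently, for $s>u$ we have $T(s)<T(u)$ and $B(s)>B(u)$, so $[B(s),T(s)]\subset[B(u),T(u)]$, giving $\K_s\subset\K_u$ as claimed.

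The argument presents no genuine obstacle; the only point requiring care is the degenerate behaviour at the endpoints of the parameter range, which I would flag rather than leave implicit. As $u\to0^+$ one has $\rho_u\to\infty$, so $\K_u$ ceases to be a bounded circle and the statement should be read on $(0,\pi/3]$ (or interpreted by continuity with the limiting unbounded circle); at the other end $t=\pi/3$ gives $\rho_t=0$, and $\K_{\pi/3}$ degenerates to the single point $X_{15}=[0,-\sqrt3/2]$. This is consistent with the convergence of the nested family to $X_{15}$ asserted in Theorem~\ref{thm:nesting} and recovers the discrete nesting of Proposition~\ref{prop:containment} as the special case obtained at the parameter values realized by iterated second Brocard triangles.
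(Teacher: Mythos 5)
Your proof is correct. The paper's own argument starts from the same data (the center $X_{182,t}$ and radius $\rho_t$ from Theorem~\ref{thm:continuous}) and the same containment criterion, but keeps both parameters in play: it reduces $\K_s\subset\K_u$ to the two-variable inequality $\sin(s-u)+\sin u\ge\sin s$ and verifies it by expanding $\sin s=\sin((s-u)+u)$ to get $\sin(s-u)(1-\cos u)+\sin u(1-\cos(s-u))\ge 0$. You instead decouple the problem: since all the circles are coaxial, containment is equivalent to nesting of the vertical diameters, and the half-angle forms $T(t)=-\tfrac32\tan(t/2)$, $B(t)=-\tfrac12\cot(t/2)$ turn the claim into monotonicity of two single-variable functions. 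The two routes are equivalent in substance (interval nesting for coaxial circles is exactly $|c_s-c_u|+\rho_s\le\rho_u$), but yours buys a little more: it gives strict containment for $s>u$ in $(0,\pi/3]$, makes the limit transparent (both $T$ and $B$ tend to $-\sqrt3/2$, i.e.\ to $X_{15}$, as $t\to\pi/3$), and you correctly flag the degeneracy at $u=0$ (unbounded ``circle''), which the paper's statement on the closed interval $[0,\pi/3]$ glosses over. The paper's version is a single symmetric inequality and slightly shorter. Both are complete proofs.
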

\begin{proof}
Combining the the expressions for center and radius of $\K_t$ and in Theorem~\ref{thm:continuous} the claim is true if and only if: 
\[\sin(s-u)+\sin u \geq\sin s.\]
\noindent where $s,u\in [0,\frac{\pi}{3}]$ with $s>u$. 
As the above inequation is equivalent to
\[\sin (s-u)(1-\cos u)+\sin u(1-\cos(s-u))\geq 0\] follows the result.
\end{proof}

 \begin{theorem}[Embedding]
 The family of second Brocard triangles of $\B_t$ are the 3-periodics of a distinct porism $\B_{t'}$, $t'>t$, such that:
 
 \[ \cot{t'}= \Sha' = \frac{4-4\cos{t}+\cos{2t} }{4\sin{t}-\sin{2t}} = \frac{\Sha^2+3}{2\Sha} \]

\label{prop:discrete}
\end{theorem}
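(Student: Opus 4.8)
The plan is to connect the continuous family $\B_t$ to the discrete second-Brocard operation via the angle correspondence $\omega_t=t/2$ established in Theorem~\ref{thm:continuous}. Since each $\B_t$ is a genuine Brocard porism with Brocard angle $\omega_t=t/2$, its $\Sha$-value is $\Sha=\cot(t/2)$. The recursion in Theorem~\ref{thm:porism} tells us that passing to the family of second Brocard triangles sends $\Sha\mapsto\Sha'=(\Sha^2+3)/(2\Sha)$, and because $\B_t$ is a porism, its second-Brocard family is again a Brocard porism, hence coincides with some $\B_{t'}$ in the continuous family. The identity $\Sha'=\cot(t'/2)$ then pins down $t'$, and the whole theorem reduces to verifying a single trigonometric identity.

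First I would record $\Sha=\cot\omega_t=\cot(t/2)$ from Theorem~\ref{thm:continuous}. Second, I would invoke Theorem~\ref{thm:porism} to assert that the second Brocard triangles form a new Brocard porism with Brocard-angle parameter $\Sha'=(\Sha^2+3)/(2\Sha)$; this is precisely the rightmost equality in the claim, so it requires no new work beyond substitution. Third, since the new family is itself a member of the continuous family (being a Brocard porism with fixed isodynamic points $X_{15},X_{16}$, by Corollary~\ref{cor:x15x16p} these are preserved, so the similarity normalization places it among the $\B_{t'}$), its parameter satisfies $\Sha'=\cot(t'/2)$. Fourth, I would establish the middle equality, rewriting $(\Sha^2+3)/(2\Sha)$ with $\Sha=\cot(t/2)$ purely in terms of $t$. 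Using the half-angle substitutions $\cos t = (\Sha^2-1)/(\Sha^2+1)$ and $\sin t = 2\Sha/(\Sha^2+1)$, a direct computation turns
\[
\frac{\Sha^2+3}{2\Sha}\;\longmapsto\;\frac{4-4\cos t+\cos 2t}{4\sin t-\sin 2t},
\]
which is the desired expression in $t$; I would confirm this by clearing denominators and applying $\cos 2t=2\cos^2 t-1$, $\sin 2t=2\sin t\cos t$.

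The main obstacle I anticipate is not any of these algebraic steps, which are routine, but the logical justification that the second-Brocard family of $\B_t$ sits inside the continuous family at a well-defined parameter $t'$ with $t'>t$. The inequality $t'>t$ is equivalent, after the half-angle identity, to $\Sha'<\Sha$, i.e.\ $(\Sha^2+3)/(2\Sha)<\Sha$, which rearranges to $3<\Sha^2$; this holds strictly whenever $\omega<\pi/6$ by Remark~\ref{rem:sha-min}, and degenerates only in the equilateral limit. So the careful part is confirming that $\Sha\mapsto\Sha'$ is a strict contraction toward $\sqrt3$ on the admissible range, which both guarantees $t'>t$ and is consistent with the monotone convergence $\omega\to\pi/6$ asserted in Theorem~\ref{thm:nesting}. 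Once that monotonicity and the trigonometric identity are in hand, the embedding statement follows immediately.
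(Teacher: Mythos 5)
Your overall strategy is the same as the paper's: use $\omega_t=t/2$ from Theorem~\ref{thm:continuous} to identify $\Sha=\cot(t/2)$, push it through the map $\Sha\mapsto\Sha'=(\Sha^2+3)/(2\Sha)$ of Theorem~\ref{thm:porism}, and transport back to the parameter $t'$; the paper phrases this as the conjugation $(h\circ g\circ h^{-1})(t)$ with $h(\Sha)=\tan^{-1}\bigl(2\Sha/(\Sha^2-1)\bigr)$. Your additional argument that $t'>t$ is equivalent to $\Sha'<\Sha$, i.e.\ to $\Sha^2>3$, is correct and is actually a point the paper's proof does not address, so that part is a genuine improvement.

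However, the step you dismiss as a routine trigonometric verification does not go through, and this is a real gap. Substituting $\Sha=\cot(t/2)$, one gets
\[
\frac{\Sha^2+3}{2\Sha}=\frac{2-\cos t}{\sin t},
\qquad\text{whereas}\qquad
\frac{4-4\cos t+\cos 2t}{4\sin t-\sin 2t}=\frac{\Sha'^2-1}{2\Sha'}=\cot t' ,
\]
and these are different quantities: at $t=\pi/3$ the first equals $\sqrt{3}$ while the second equals $1/\sqrt{3}$. In other words, the middle expression in the displayed chain is $\cot t'$ (consistent with the leftmost term as literally written), while the rightmost expression is $\Sha'=\cot\omega_{t'}=\cot(t'/2)$; the two are related by the double-angle formula for the cotangent, not equal. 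Had you actually cleared denominators as you propose, you would have found the mismatch rather than a confirmation. The statement itself conflates $\cot t'$ with $\cot(t'/2)$, and a correct proof must either verify $\cot t'=\frac{4-4\cos t+\cos 2t}{4\sin t-\sin 2t}$ and separately $\Sha'=\frac{\Sha^2+3}{2\Sha}=\frac{2-\cos t}{\sin t}$, or restate the chain with $\cot(t'/2)$ on the left and drop (or correct) the middle term. As written, your step four asserts an identity that is false, so the proof cannot be accepted without repairing this point.
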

 
\begin{proof}
  By Theorem~\ref{thm:continuous}
  the inellipse which yields $\Sha_t$ is given by
    $t=h(\Sha_t)=\tan^{-1}(\frac{2\Sha_t}{\Sha_t^2-1})$.
    From Theorem~\ref{thm:porism} obtain that $\Sha'=g(\Sha_t)=(\Sha_t^2+3)/(2\Sha_t)$. Taking the composition
   $(h\circ g\circ h^{-1})(t)$ leads to the result.
 \end{proof}

\begin{proposition}
 The circles $\K_t$ are perpendicular to $\Cm_1$ and $\Cm_2$
\end{proposition}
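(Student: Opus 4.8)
The plan is to use the power-of-a-point form of the orthogonality criterion: a circle with center $O$ and radius $r$ is orthogonal to a circle $\Cm$ exactly when the power of $O$ with respect to $\Cm$ equals $r^2$. Here $O=X_{182,t}$ and $r=\rho_t$ are the center and radius of $\K_t$ supplied by Theorem~\ref{thm:continuous}, and $\Cm$ will be each of $\Cm_1,\Cm_2$ in turn. Since $\mathrm{pow}(X_{182,t},\Cm_i)=|X_{182,t}-O_{\Cm_i}|^2-r_{\Cm_i}^2$, this criterion is equivalent to the usual condition that the squared distance between centers equals the sum of squared radii.

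The structural observation that does the real work is that, by Theorem~\ref{thm:continuous}, the center $X_{182,t}=[0,\tfrac{\cos t-2}{2\sin t}]$ of $\K_t$ lies on the $y$-axis, which is precisely the line $X_{15}X_{16}$ joining the fixed isodynamic points $[0,\mp\sqrt3/2]$. By Proposition~\ref{prop:beltrami_circles} both Beltrami circles $\Cm_1,\Cm_2$ pass through $X_{15}$ and $X_{16}$, so the $y$-axis is a secant meeting each of them in exactly that pair of points. Consequently the power of a point at height $y$ on the axis with respect to either $\Cm_i$ is the same signed product $(y+\tfrac{\sqrt3}{2})(y-\tfrac{\sqrt3}{2})=y^2-\tfrac34$, independent of $i$. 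This is what lets a single computation settle orthogonality to both circles at once, and it bypasses any need for the radii or centers of the Beltrami circles themselves.

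It then remains to verify $y_{182}^2-\tfrac34=\rho_t^2$, i.e.
\[ \left(\frac{\cos t-2}{2\sin t}\right)^2-\left(\frac{2\cos t-1}{2\sin t}\right)^2=\frac34, \]
which reduces to the one-line identity $(\cos t-2)^2-(2\cos t-1)^2=3(1-\cos^2 t)=3\sin^2 t$; dividing by $4\sin^2 t$ yields exactly $3/4$. Geometrically the equality $y_{182}^2-\tfrac34=\rho_t^2$ asserts that $X_{15}$ and $X_{16}$ are inverse points with respect to $\K_t$, so each $\K_t$ belongs to the Apollonian pencil with limit points $X_{15},X_{16}$ and is therefore orthogonal not merely to $\Cm_1,\Cm_2$ but to \emph{every} circle through those two points.

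I expect no genuine obstacle here. The only point requiring care is to fix the Beltrami circles consistently in the normalization of Theorem~\ref{thm:continuous} and to recognize the secant-power simplification; once the center of $\K_t$ is seen to sit on the line $X_{15}X_{16}$, the proposition collapses to the single trigonometric identity above rather than to two separate distance computations.
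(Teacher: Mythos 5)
Your proof is correct, but it takes a genuinely different route from the paper's. The paper argues by direct computation: it writes the implicit equations of $\Cm_{1,2}$ and $\K_t$, solves for the intersection points explicitly, and checks that the gradients are orthogonal there. You instead use the power-of-a-point form of the orthogonality criterion and exploit the fact that the center $X_{182,t}$ of $\K_t$ lies on the $y$-axis, which is the common chord $X_{15}X_{16}$ of the two Beltrami circles; hence its power with respect to either circle is the same quantity $y_{182}^2-\tfrac34$, and both orthogonality conditions collapse to the single identity $(\cos t-2)^2-(2\cos t-1)^2=3\sin^2 t$. This is shorter, avoids computing any intersection points, and proves strictly more: it exhibits $X_{15}$ and $X_{16}$ as inverse points with respect to every $\K_t$, so each $\K_t$ is orthogonal to the \emph{entire} pencil of circles through the isodynamic points --- precisely the fact the paper only records later, in the remark that the orthogonal family to $\K_t$ is the Apollonian pencil through the foci of the envelope. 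One small point you handled correctly but should make explicit: in the normalization of Theorem~\ref{thm:continuous} the Beltrami circles are the unit circles centered at $\left[\mp\tfrac12,0\right]$ (as in the paper's own proof of this proposition), not at $\left[0,\mp\tfrac12\right]$ as Remark~\ref{rem:foci} misprints; that they pass through $X_{15},X_{16}=\left[0,\mp\tfrac{\sqrt3}{2}\right]$ is then immediate from $\tfrac14+\tfrac34=1$.
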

\begin{proof} Let 
\begin{align*}
    \Cm_{1,2}:& \;\left(x\pm \frac{1}{2}\right)^2+y^2-1=0\\
    \K_t:&\;x^2+\left(y-\frac{\cos t-2}{2\sin t}\right)^2-\left(\frac{2\cos t-1}{ 2\sin t}\right)^2=0
\end{align*} 

 The intersection of the circles $\Cm_{1,2}$ and $\K_t$
 are the points
 \[
     p_{1,\mp}= \left[  \mp   {\frac {3(2\,\cos t -1)}{2( 5-4\,\cos
t )}},  - {\frac {3\sin t }{ 5-4\,\cos
 t }} \right],\;\;\;
p_{2,\mp}= \left[\mp (\frac{1}{2}-\cos t) , \sin t 
 \right]
\]
Direct calculations show that $\langle \nabla C (p_{i,\mp}),\nabla K(p_{i,\mp})\rangle=0$.
\end{proof}



\begin{remark}
For any $B_t$, the midpoint  $X_{187}$  of $P_2 U_2$ is the inverse with respect to $\Gamma_t$ of the symmedian point $X_{6,t}$.
\end{remark}

Let $Z_t$ denote the lower vertices of $\E_t$. Setting the derivative of $b$ in Theorem~\ref{thm:continuous} to zero obtain:

\begin{corollary}
 The minor semi-axis $b$ (resp. $Z_t$) of $\E_t$ reverses direction of motion at $t_b=\cos^{-1}(3/4){\simeq}41.41^\circ$ (resp. $t_0=\tan^{-1}(4/3){\simeq}53.13^\circ$). Furthermore $0{\leq}b{\leq}1/4$.
\end{corollary}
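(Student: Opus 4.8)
The plan is to treat both claims as the location of stationary points of two scalar, one-dimensional motions along the $y$-axis. From Theorem~\ref{thm:continuous} the minor semi-axis is $b=\tfrac{1}{\sqrt2}\sqrt{(2\cos t-1)(1-\cos t)}$ and the center is $O_t=[0,-\sin t]$, so the lower vertex is $Z_t=[0,-\sin t - b]$. Both $b$ and the $y$-coordinate $y_Z=-\sin t-b$ are smooth functions of $t$ on $[0,\pi/3]$, and ``reversing direction of motion'' means a sign change of the corresponding derivative, i.e. a zero of $b'$ and of $y_Z'$ respectively; the smoothness on the open interval lets me certify a genuine reversal by checking that the derivative actually changes sign there.

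For the $b$ part I would differentiate the rational quantity $b^2=\tfrac12(3\cos t-2\cos^2 t-1)$ rather than $b$ itself, to postpone the square root. This gives $2b\,b'=\tfrac12\sin t\,(4\cos t-3)$. On $(0,\pi/3)$ we have $\sin t\neq0$, so the only interior critical point is $\cos t=3/4$, i.e. $t_b=\cos^{-1}(3/4)$; since $4\cos t-3$ switches from positive to negative there, it is a maximum. Evaluating gives $b^2=1/16$, hence $b=1/4$, while at the endpoints $t=0$ and $t=\pi/3$ the factors $1-\cos t$ and $2\cos t-1$ vanish, so $b=0$. As $b\ge0$ throughout with a single interior maximum, this yields $0\le b\le1/4$.

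For $Z_t$ the condition $y_Z'=0$ reads $-\cos t-b'=0$, i.e. $b'=-\cos t$. Substituting $b'=\sin t(4\cos t-3)/(4b)$ and clearing the denominator gives $\sin t(4\cos t-3)=-4b\cos t$; squaring and writing $u=\cos t$ turns this into $(1-u^2)(4u-3)^2=8(2u-1)(1-u)u^2$. I would cancel the common factor $(1-u)$, which is nonzero for $t<\pi/3$, and expand: the cubic and quadratic terms cancel identically, collapsing the expression to the linear equation $-15u+9=0$, so $u=3/5$ and $t_0=\cos^{-1}(3/5)=\tan^{-1}(4/3)$.

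The main obstacle is the square root in $b$, which is why the $Z_t$ computation is the only genuinely nontrivial step: squaring can introduce an extraneous root, so I would confirm $t_0$ is a true reversal by a sign argument. On $(0,t_b)$ the left side $\sin t(4\cos t-3)$ is positive while $-4b\cos t$ is negative, so no solution can occur there; the genuine root lies in $(t_b,\pi/3)$, where both sides are negative, and $y_Z'$ changes sign across it. The feature that keeps the algebra short is precisely the cancellation of the two leading powers of $u$, which reduces what looks like a cubic to a single linear equation.
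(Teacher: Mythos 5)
Your computation is correct and takes essentially the same route as the paper, which simply sets the derivatives of $b(t)$ and of the lower-vertex ordinate $-\sin t - b(t)$ to zero; your values $t_b=\cos^{-1}(3/4)$, $b_{\max}=1/4$, and $t_0=\cos^{-1}(3/5)=\tan^{-1}(4/3)$ all check out, and your sign analysis properly excludes the extraneous root introduced by squaring. (One cosmetic slip: the factor $1-u=1-\cos t$ that you cancel vanishes only at $t=0$, not at $t=\pi/3$ — it is $2u-1$ that vanishes there — but the cancellation is still valid on the open interval where the reversals are sought.)
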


Note that the major semi-axis $a$ and the y-coordinate of the upper vertex of $\E_t$ are monotonically decreasing.

\begin{figure}
    \centering
    \includegraphics[width=.66\textwidth]{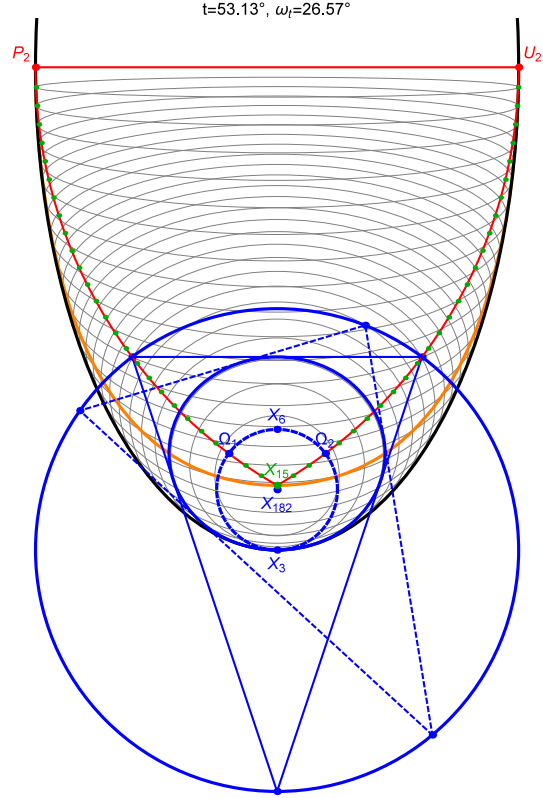}
    \caption{The envelope (thick black) of inellipses $\E_t$ (gray) is an ellipse (only bottom half shown) whose foci are the isodynamic points $X_{15},X_{16}$ common to all families. At $t_0=\cos^{-1}(3/5)$, the porism is such that the Brocard circle (dashed blue) is tangent to $\E_t$ (solid blue), $X_3$ is at the envelope's lower vertex, and the lower vertex of $\E_t$ reverses direction of motion. A non-isosceles 3-periodic (dashed blue) is also shown. The locus (orange) of points at which the 3-web formed by the family of inellipses $\E_t$ is perpendicular to the family of Brocard circles $\K_s$ (taken as independent families) is a quartic containing the isodynamic  and Beltrami points; see Remark~\ref{rem:perp-locus}.}
    \label{fig:detach}
\end{figure}

\begin{proposition}
 The envelope $\xi$ of the family $\E_t$ is given by:
 
 \[ \xi_{1,t},\xi_{2,t}= \left[\pm \frac{ \sqrt{5\cos t-3}}{2\sqrt{\cos t+1}},
 -\frac{2\ sin t}{ \cos t+1}
 \right]
 \]
 and this is contained in the ellipse given by
 \[4x^2+y^2-1=0\]
 Furthermore, the isodynamic points $X_{15},X_{16}$ are at its foci.
\end{proposition}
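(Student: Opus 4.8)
The plan is to obtain the envelope directly from the standard system $F=0$, $\partial F/\partial t=0$, where $F(x,y,t)=0$ is the implicit equation of $\E_t$, and then to eliminate $t$ from the resulting parametrization. The first step is to clear denominators efficiently. Writing $c=\cos t$ and $s=\sin t$, the stated formulas for $a,b$ give the convenient ratio $b^2/a^2=2(1-c)$, so that multiplying the equation of $\E_t$ through by $b^2$ produces the polynomial form
\[ F(x,y,t)=2(1-c)\,x^2+(y+s)^2-\tfrac12(2c-1)(1-c). \]
This is polynomial in $x,y,s,c$, which is what keeps the subsequent differentiation manageable.

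Next I would impose $\partial F/\partial t=0$, namely
\[ 2s\,x^2+2c\,(y+s)-\tfrac12 s\,(4c-3)=0, \]
and solve this linear relation for $x^2$, substituting the result back into $F=0$. With the abbreviation $w=y+s$ the two equations collapse to a single quadratic,
\[ w^2-\frac{2c(1-c)}{s}\,w-(1-c)^2=0. \]
The crucial simplification is that its discriminant is a perfect square: using $c^2+s^2=1$ the radical reduces to $2(1-c)/s$, so $w=(1-c)(c\pm 1)/s$. The $+$ branch gives $w=s$ and forces $x^2=-3/4<0$, hence is geometrically spurious and discarded; the $-$ branch gives $w=-(1-c)^2/s$. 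Back-substituting and using $s^2=(1-c)(1+c)$ yields $y=-2(1-c)/s$ and $x^2=(5c-3)/(4(1+c))$. Rewriting $y$ via the half-angle identity $(1-c)/s=s/(1+c)$ then reproduces precisely the stated $\xi_{1,t},\xi_{2,t}$.

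Containment in the ellipse $4x^2+y^2=1$ is then a one-line verification: from the expressions just found, $4x^2=(5c-3)/(1+c)$ and $y^2=4(1-c)/(1+c)$, whose sum is $(c+1)/(1+c)=1$. Finally, $4x^2+y^2=1$ is $x^2/(1/4)+y^2/1=1$, an ellipse whose major axis lies along $y$; its foci sit at $\bigl[0,\pm\sqrt{1-1/4}\,\bigr]=[0,\pm\sqrt3/2]$, which are exactly the stationary isodynamic points $X_{15},X_{16}$ recorded in Theorem~\ref{thm:continuous}. I expect the only genuine obstacle to be the third step: recognizing the perfect-square discriminant and, above all, correctly identifying and discarding the extraneous branch $w=s$ (the one with $x^2<0$), since the raw envelope system $F=F_t=0$ can return spurious components. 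Once that branch is eliminated, the remaining identities are mechanical.
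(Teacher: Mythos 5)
Your proof is correct and follows essentially the same route as the paper: the classical envelope condition, which the paper states parametrically via $\Gamma(u,t)$ and its $t$-derivative while you work with the implicit form $F=\partial F/\partial t=0$; all of your algebra (the quadratic in $w=y+\sin t$, the perfect-square discriminant, the half-angle simplification, and the verification that $4x^2+y^2=1$ with foci at $[0,\pm\sqrt{3}/2]=X_{16},X_{15}$) checks out. The one point where you go beyond the paper's one-line computation is in explicitly identifying and discarding the spurious branch $w=\sin t$, which forces $x^2=-3/4$; that is a genuine and worthwhile addition, since the raw system $F=F_t=0$ does return that extraneous component.
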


\begin{proof}
 The ellipse $\mathcal{E}_t$ is parametrized by
 \[ \Gamma(u,t)=[a(t)\cos u, b(t)\sin u]+[0,-sin(t)]\]

The envelope is the solution to $\Gamma(u,t)=\Gamma_t(u,t)=0$ which leads to the claim.
\end{proof}

\begin{remark} At $t>t_0=\cos^{-1}(\frac{3}{5})=\tan^{-1}(\frac{4}{3})$ the family $\E_t$ is nested and so the envelope is empty for $t>t_0.$  See Fig. \ref{fig:detach}.
\end{remark}
  
 \begin{proposition}
When $t<\tan^{-1}(\frac{4}{3})$  the Brocard Circle $\K_t$ intersects the Brocard inellipse $\E_t$ at its envelope $\xi_{1,t}$ and $\xi_{2,t}$.
 \end{proposition}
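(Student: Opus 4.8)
The plan is to reduce the statement to an explicit intersection computation in Cartesian coordinates, exploiting that both $\E_t$ and $\K_t$ are symmetric about the $y$-axis. Writing the inellipse as $x^2/a^2+(y+\sin t)^2/b^2=1$ with $(a,b)$ as specified just before Remark~\ref{rem:foci}, and the Brocard circle from Theorem~\ref{thm:continuous} as $x^2+(y-c_t)^2=\rho_t^2$ with $c_t=(\cos t-2)/(2\sin t)$ and $\rho_t=(2\cos t-1)/(2\sin t)$, every common point must occur in a pair $(\pm x_0,y_0)$. Hence it suffices to locate the admissible $y$-values and to check the sign of the corresponding $x^2$.

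First I would establish that the envelope points lie on both curves. Membership in $\E_t$ is immediate: $\xi_{1,t},\xi_{2,t}$ are by construction the points at which $\E_t$ touches the envelope $\xi$ of the preceding proposition, so they lie on $\E_t$. Membership in $\K_t$ I would verify by direct substitution. Writing $\xi_y=-2\sin t/(1+\cos t)=-2\tan(t/2)$, one computes $\xi_y-c_t=(3\cos t-2)/(2\sin t)$ after the factorization $3\cos^2 t+\cos t-2=(3\cos t-2)(\cos t+1)$; then $\xi_x^2+(\xi_y-c_t)^2$ has numerator $(5\cos t-3)(1-\cos t)+(3\cos t-2)^2=(2\cos t-1)^2$ over $4\sin^2 t$, which is exactly $\rho_t^2$. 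Thus $\xi_{1,t},\xi_{2,t}\in\K_t\cap\E_t$.

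It remains to see that these are the only real intersections, and that they are real precisely in the stated range. Eliminating $x^2$ via the circle equation yields a quadratic in $y$ whose leading coefficient is $a^2-b^2=(2\cos t-1)^2/4>0$. A short calculation (Vieta, using the half-angle identity $(1-\cos t)/\sin t=\tan(t/2)$) shows the two roots are $y=\xi_y$ and $y=0$. Substituting $y=0$ back into the circle gives $x^2=\rho_t^2-c_t^2=-3/4<0$ --- equivalently, the origin has constant power $3/4$ with respect to every $\K_t$ --- so this root produces no real point. Hence every real intersection has $y=\xi_y$, where $x^2=(5\cos t-3)/(4(\cos t+1))$; this is positive exactly when $5\cos t-3>0$, i.e.\ $t<\cos^{-1}(3/5)=\tan^{-1}(4/3)$, and the two resulting points are precisely $\xi_{1,t}$ and $\xi_{2,t}$.

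The routine parts are the trigonometric simplifications; the one genuinely delicate point is recognizing that the spurious root of the elimination quadratic is the fixed level $y=0$ (independent of $t$) carrying the negative value $x^2=-3/4$, which is what both forces uniqueness and pins the reality threshold to exactly $\tan^{-1}(4/3)$, matching the value at which the envelope itself becomes empty.
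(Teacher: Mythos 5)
Your proof is correct, and it is substantially more complete than the one in the paper. The paper's entire argument is the single sentence that $\K_t$ passes through $[0,-\tfrac{\sin t}{2(1-\cos t)}]$ and is tangent to $\E_t$ at $[0,-1]$ when $\tan t=4/3$; that only identifies the critical parameter value and does not actually show that for $t<\tan^{-1}(4/3)$ the intersection consists precisely of the two envelope points. Your elimination argument supplies exactly what is missing: I checked the key identities and they hold --- $\xi_y-c_t=(3\cos t-2)/(2\sin t)$, the numerator $(5\cos t-3)(1-\cos t)+(3\cos t-2)^2$ collapses to $(2\cos t-1)^2$, the leading coefficient of the elimination quadratic is $a^2-b^2=(2\cos t-1)^2/4$, the product of its roots vanishes (so one root is $y=0$), and the sum of roots equals $-2(1-\cos t)/\sin t=\xi_y$. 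The observation that $\rho_t^2-c_t^2=-3/4$ independently of $t$ (equivalently, the origin has constant power $3/4$ with respect to every $\K_t$) is a nice structural fact that both disposes of the spurious root and explains why the reality threshold is governed solely by the sign of $5\cos t-3$, recovering $t_0=\cos^{-1}(3/5)=\tan^{-1}(4/3)$ and, at that value, the tangency point $[0,-1]$ the paper mentions. In short: same computational spirit as the paper, but yours is the actual proof.
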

 \begin{proof}
  The Brocard circle passes through the point $[0,-\frac{\sin t}{2(1-\cos t)}  ]$ and it is tangent to the  Brocard inellipse at the point $[0,-1]$ when $\tan{t}=4/3$.
 \end{proof}

 \begin{proposition}
 The major semi-axis  $a$ is concave and monotonically-decreasing with $a(0)=1, a(\frac{\pi}{3})=0$. The minor semi-axis $b$ is concave with a global maximum $\frac{1}{4}$ attained at $t=\cos^{-1}(\frac{3}{4})$
 and $b(0)=b(\frac{\pi}{3})=0$.
 The eccentricity $\varepsilon(t)=\frac{c(t)}{a(t)} $ is a concave function with $\varepsilon(0)=1, \varepsilon(\frac{\pi}{3})=0$.
 \end{proposition}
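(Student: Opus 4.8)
The plan is to reduce every assertion to the closed forms already supplied by Theorem~\ref{thm:continuous} and Remark~\ref{rem:foci}, namely
\[
a(t)=\tfrac12\sqrt{2\cos t-1},\qquad b(t)=\tfrac{1}{\sqrt2}\sqrt{(2\cos t-1)(1-\cos t)},\qquad \varepsilon(t)=\sqrt{2\cos t-1},
\]
and then to study each as a real function of $t$ on $[0,\pi/3]$. The boundary values in each case are read off directly from these expressions at $t=0$ and at $t=\pi/3$ (where $\cos t=\tfrac12$), so the real content is the monotonicity and concavity claims. The observation that streamlines the work is that $\varepsilon=2a$; hence $a$ and $\varepsilon$ share the same monotonicity and concavity, and it suffices to analyze one of them.

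For $a$ (equivalently $\varepsilon$), monotone decrease is immediate since $\cos t$ decreases on $[0,\pi/3]$, hence so does the radicand $2\cos t-1$. For concavity I would differentiate twice; writing $a=\tfrac12(2\cos t-1)^{1/2}$ gives
\[
a''(t)=-\tfrac12(2\cos t-1)^{-3/2}\bigl[\cos t(2\cos t-1)+\sin^2 t\bigr].
\]
The bracket collapses to $\cos^2 t-\cos t+1=(\cos t-\tfrac12)^2+\tfrac34>0$, so $a''<0$ on $(0,\pi/3)$ and both $a$ and $\varepsilon$ are concave.

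For $b$, write $b=\tfrac{1}{\sqrt2}P^{1/2}$ with $P(t)=(2\cos t-1)(1-\cos t)$. The interior critical points are the zeros of $P'(t)=\sin t\,(4\cos t-3)$, i.e. $t=0$ or $\cos t=\tfrac34$; substituting $\cos t=\tfrac34$ gives $P=\tfrac18$ and hence the claimed maximum $b=\tfrac14$ at $t=\cos^{-1}(\tfrac34)$. For concavity, differentiating twice yields $b''=\tfrac{1}{2\sqrt2}P^{-3/2}\bigl[P''P-\tfrac12(P')^2\bigr]$, so on the interior (where $P>0$) it suffices to show $2P''P-(P')^2\le 0$. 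Setting $w=\cos t$ and using $P''=8w^2-3w-4$, this reduces to proving that the quartic $Q(w)=-16w^4+36w^3-25w^2+6w-1$ is nonpositive for $w\in[\tfrac12,1]$.

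The decisive and least routine step is establishing $Q\le 0$, and I expect the main obstacle to reside here; the clean route is to notice that $w=1$ is a double root, giving the factorization
\[
Q(w)=(w-1)^2\bigl(-16w^2+4w-1\bigr).
\]
The quadratic factor has discriminant $16-64=-48<0$ with negative leading coefficient, hence is negative for every real $w$; therefore $Q(w)\le 0$ everywhere, with equality only at $w=1$ (that is, $t=0$). This gives $b''\le 0$ on the interior and establishes the concavity of $b$, completing the argument.
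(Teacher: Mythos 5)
Your computation is correct and is exactly what the paper's one-line proof (``Direct analysis from Theorem~\ref{thm:continuous}'') leaves implicit: the reduction $\varepsilon=2a$, the sign of $a''$ via $\cos^2t-\cos t+1>0$, the critical point $\cos t=\tfrac34$ with $b=\tfrac14$, and the factorization $Q(w)=(w-1)^2(-16w^2+4w-1)\le 0$ all check out. One caveat: reading the boundary values ``directly off the formulas'' actually gives $a(0)=\tfrac12$, not the $a(0)=1$ asserted in the statement (only $\varepsilon=2a$ satisfies $\varepsilon(0)=1$), so the proposition's value for $a(0)$ appears to be a slip in the paper that your argument silently endorses rather than flags.
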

 
\begin{proof}
Direct analysis from Theorem \ref{thm:continuous}.
 \end{proof}

Let $V_l(t)\Gamma( -\frac{\pi}{2},t)=[0,-b(t)-\sin{t}]$ denote the lower vertex of $\E_t$.

\begin{corollary}
 $V_l(t)$ moves non-monotonically along the Brocard axis and converges to $X_{15}=[0,-\frac{\sqrt{3}}{2}]$. At $t_0=\tan^{-1}(\frac{4}{5})$ it attains a global minimum $[0,-1]$.
  \label{prop:vertices_ellipse_Et}
\end{corollary}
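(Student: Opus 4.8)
The plan is to reduce everything to the scalar height function along the $y$-axis. By Theorem~\ref{thm:continuous} together with the stated formula for $b$, the lower vertex is $V_l(t)=[0,-\sin t - b(t)]$ with $b(t)=\tfrac{1}{\sqrt 2}\sqrt{(2\cos t-1)(1-\cos t)}$. Since the $x$-coordinate vanishes identically and the Brocard axis is the $y$-axis (it carries $X_3$, $X_6$ and the isodynamic points $X_{15},X_{16}$), the assertion that $V_l$ moves along the Brocard axis is immediate. It is then natural to set $\phi(t)=\sin t + b(t)$, so that $V_l(t)=[0,-\phi(t)]$, and to study the single real function $\phi$ on $[0,\pi/3]$; a global minimum of the height corresponds to a global maximum of $\phi$.

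First I would record the boundary and limiting values. At $t=0$ one has $b=0$ and $\sin t=0$, so $\phi(0)=0$; as $t\to\pi/3^-$ the factor $2\cos t-1\to 0$ forces $b\to 0$ while $\sin t\to\sqrt 3/2$, hence $V_l(t)\to[0,-\sqrt 3/2]=X_{15}$. This settles the convergence claim and shows the extreme height is not attained at an endpoint.

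Next I would locate the interior critical point. Writing $b=\tfrac{1}{\sqrt2}\sqrt g$ with $g(t)=-2\cos^2 t+3\cos t-1$, differentiation gives $\phi'(t)=\cos t+\dfrac{\sin t\,(4\cos t-3)}{2\sqrt2\,\sqrt g}$. Setting $\phi'=0$, isolating the radical and squaring clears the square root and, after the leading terms cancel, collapses to the quadratic $(5\cos t-3)(\cos t-1)=0$ in $c=\cos t$. The root $c=1$ is the endpoint $t=0$, while $c=3/5$ gives the genuine interior critical point $t_0=\cos^{-1}(3/5)$. There $\sin t_0=4/5$ and $b(t_0)=1/5$, so $\phi(t_0)=1$ and $V_l(t_0)=[0,-1]$. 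Comparing $\phi(t_0)=1$ with $\phi(0)=0$ and $\phi(\pi/3)=\sqrt3/2$ shows $\phi$ rises from $0$ to an interior maximum of $1$ and then falls back to $\sqrt3/2$; equivalently $V_l$ descends to its lowest point $[0,-1]$ and then climbs back toward $X_{15}$, which is precisely the asserted non-monotonicity together with the global minimum.

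The main obstacle is bookkeeping around the squaring step rather than anything conceptual: squaring $\phi'=0$ can introduce a spurious root, so I would carry along the sign condition $4\cos t-3<0$ (i.e.\ $\cos t<3/4$) that the pre-squared balance requires, check that $c=3/5$ satisfies it, and confirm directly that $\phi'(t_0)=0$ (indeed $b'(t_0)=-3/5=-\cos t_0$). A second minor point is verifying that the interior critical point is a maximum of $\phi$ (minimum of height) rather than an inflection; since $t_0\simeq 53.13^\circ$ lies in $(0,\pi/3)$ and $\phi(t_0)$ strictly exceeds both endpoint values, the sign change of $\phi'$ across $t_0$ suffices. I note in passing that the computation pins the minimizing angle at $\cos^{-1}(3/5)=\tan^{-1}(4/3)$, which is consistent with the stated minimum location $V_l(t_0)=[0,-1]$.
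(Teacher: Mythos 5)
Your proof is correct and follows exactly the route the paper intends: the paper's own proof consists of the single phrase ``Direct analysis from Theorem~\ref{thm:continuous},'' and you have supplied that analysis in full, with the squaring step and its sign condition handled properly. One substantive point your computation surfaces and that you only hint at ``in passing'': the critical angle is $t_0=\cos^{-1}(3/5)=\tan^{-1}(4/3)\simeq 53.13^\circ$, so the value $\tan^{-1}(4/5)$ printed in the corollary is a typo (the earlier corollary on $Z_t$ and the caption of Figure~\ref{fig:detach} use the correct $\cos^{-1}(3/5)=\tan^{-1}(4/3)$, and at $\tan^{-1}(4/5)$ one gets $V_l\approx[0,-0.873]\neq[0,-1]$), while the claimed minimum location $[0,-1]$ and the convergence to $X_{15}$ are exactly as you verify.
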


\begin{proof}  Direct analysis from Theorem~\ref{thm:continuous}.
 \end{proof}

\begin{remark}
 At $t=\tan^{-1}(\frac{4}{5})$,  $\Sha=\frac{5+\sqrt{41}}{4}\approx 2.851$. At $t=\cos^{-1}(\frac{3}{5}) $, $\Sha=2$,
\end{remark}

\begin{remark}
 The family of ellipses $\E_t$ is defined by the implicit differential equation
 \[ 16\,{x}^{2}{y}^{2}\; dx^{2}-8\,xy \left( 4x^2-1 \right){ dx}\,{  dy}+ \left( 16\,{x}^{4}+8\,{x}^{2}+4\,{y}
^{2}-3 \right) dy^{2}=0
 \]
 The family of circles $\K_t$ is given by the differential equation
 \[ 8 x y\; dx-(4x^2-4y^2+3)dy=0\]
 
Consequently, in the interior of the envelope,
the family of circles $\K_t$ and ellipses $\E_t$ define a 3-web \cite{akopyan_2018} which is singular at the coordinate axes and the boundary of the envelope. The orthogonal family to $\K_t$ is the Apollonian
pencil of circles passing through the foci of the envelope. See \cite{akopyan_2018} for the classification of other types of 3-webs defined by circles and ellipses.
\end{remark}
 
\noindent Referring to Figure~\ref{fig:detach}:

\begin{remark}
The families of ellipses $\E_t$ and circles  $\K_s$ (where $t,s$ are independent parameters)
 are orthogonal,
 if and only if,
 \[   16x^4+8x^2+4y^2-3=0.\]
 They are tangent,  if and only if, $x y=0$.
 \label{rem:perp-locus}
 \end{remark}
 


\section{Conclusion}
\label{sec:conclusion}
Above we describe various properties of a discrete sequence of Brocard porisms induced by the second Brocard triangle, and how said sequence is embedded in a continuous one. Videos mentioned in the text appear on Table~\ref{tab:videos}.

\begin{table}[H]
\begin{tabular}{|c|c|c|l|}
\hline
Exp & Title & Link (youtu.be/...)  \\
\hline
01 & \makecell[lt]{1st, 2nd, 5th, and 7th Brocard triangles\\over the Brocard porism} & \href{https://youtu.be/_bK-BCQv24A}{\texttt{\_bK-BCQv24A}} \\
02 & \makecell[lt]{Brocard porism and 2nd Brocard triangle} &  \href{https://youtu.be/Wgwh4-neJp4}{\texttt{Wgwh4-neJp4}} \\
03 & \makecell[lt]{Porism induced by family of 2nd Brocard triangles} &  \href{https://youtu.be/MprJtB4UW9s}{\texttt{MprJtB4UW9s}}  \\
04  & \makecell[lt]{Infinite sequence of Brocard porisms\\induced by the second Brocard triangle} & \href{https://youtu.be/Z3YlEbCFbnA}{\texttt{Z3YlEbCFbnA}} \\
05 & \makecell[lt]{Continuous family of Brocard porisms} &  \href{https://youtu.be/jY_8zxBljuk}{\texttt{jY\_8zxBljuk}} \\
\hline
\end{tabular}
\caption{Illustrative animations, click on the link to view it on {YouTube} and/or enter \texttt{youtu.be/<code>} as a URL in your browser, where \texttt{<code>} is the provided string.}
\label{tab:videos}
\end{table}

\noindent We would like to thank Peter Moses for his prompt and invaluable help with dozens of questions related to Brocard geometry, and Daniel Jaud for his generous and precise editorial help. The second author is fellow of CNPq and coordinator of Project PRONEX/ CNPq/ FAPEG 2017 10 26 7000 508.
\appendix

\section{Isosceles 3-Periodic}
\label{app:isosceles}
Consider an isosceles 3-periodic $\T=ABC$ in the Brocard porism, where $AB$ is tangent to $\E$ at one of its minor vertices. Let $|AB|=2d$ and the height be $h$. Let $\zeta=d^2+h^2$. Let the origin $(0,0)$ be at its circumcenter $X_3$. Its vertices will be given by:

\[A=\left[-d ,\frac{d^2-h^2}{2h}\right], \;\;\; B= \left[d,\frac{d^2-h^2}{2h}\right], \;\;\; \left[0 ,\frac{\zeta}{2h}\right] \]

\begin{proposition}\label{prop:pair_brocard}
The Brocard porism containing $\T$ as a 3-periodic is defined by the following circumcircle $\K_0$ and Brocard inellipse $\E$:

\begin{align*}
\K_0:& x^2+y^2-R^2=0, \;\;\; R=\frac{\zeta}{2h}\\
\E:& -64d^2  h^4  x^2-4  h^2  (9  d^2+h^2)  \zeta  y^2 +4  h  (3  d^2+h^2)  (3  d^2-h^2)  \zeta  y\\&-(d^2-h^2) (9d^2 -h^2) \zeta^2=0
\end{align*}
\end{proposition}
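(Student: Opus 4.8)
The plan is to identify the two conics one at a time and match coefficients. The circumcircle is immediate: a direct computation gives $|A|^2=|B|^2=d^2+(d^2-h^2)^2/(4h^2)=(d^2+h^2)^2/(4h^2)=|C|^2$, so the three vertices lie on the circle of radius $R=\zeta/(2h)$ about the origin, which is exactly $\K_0$ (and confirms $X_3=(0,0)$ is the circumcenter, as the preamble asserts).

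For the inellipse I would first reduce to the data feeding Lemma~\ref{lem:ab}. From the vertices one reads off $|AB|=2d$, $|BC|=|CA|=\sqrt{\zeta}$, and $\Delta=dh$, whence $\lambda=(s_1s_2)^2+(s_2s_3)^2+(s_3s_1)^2=\zeta(9d^2+h^2)$. Feeding these into the Brocard-inellipse axis formulas used in the proof of Lemma~\ref{lem:ab} (equivalently, computing $\sin^2\omega=4d^2h^2/(\zeta(9d^2+h^2))$ and $\Sha=(3d^2+h^2)/(2dh)$) yields
\[
a^2=\frac{d^2\zeta}{9d^2+h^2},\qquad b^2=\frac{16d^4h^2}{(9d^2+h^2)^2},
\]
and the identity $a^2-b^2=d^2(3d^2-h^2)^2/(9d^2+h^2)^2\ge 0$ shows $a\ge b$, so the major axis is horizontal, consistent with $\Omega_1\Omega_2$ being horizontal in this symmetric configuration.

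It remains to place the center. By the symmetry of $\T$ about the $y$-axis, $\E$ is axis-aligned with center $(0,y_0)$, so only $y_0$ is unknown. I would fix it by tangency to the base: since $\E$ is inscribed in $\T$ and $AB$ is the horizontal line $y=(d^2-h^2)/(2h)$, its lowest point $(0,y_0-b)$ must lie on $AB$, giving $y_0=(9d^4-h^4)/(2h(9d^2+h^2))$. Writing $\E$ as $b^2x^2+a^2(y-y_0)^2-a^2b^2=0$, scaling by $-4h^2(9d^2+h^2)^2/d^2$, and expanding then matches the claimed equation term by term: the $x^2$ and $y^2$ coefficients are immediate from $a^2,b^2$, the linear term reduces to $4h(3d^2-h^2)(3d^2+h^2)\zeta$, and the constant collapses to $-(d^2-h^2)(9d^2-h^2)\zeta^2$ via $y_0^2-b^2=(y_0-b)(y_0+b)$ with $y_0+b=(9d^2-h^2)\zeta/(2h(9d^2+h^2))$.

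The arithmetic is essentially mechanical, resting on the factorizations $9d^4+10d^2h^2+h^4=(9d^2+h^2)\zeta$ and $9d^4+8d^2h^2-h^4=(9d^2-h^2)\zeta$. The one point I expect to be the real obstacle is logical rather than computational: Lemma~\ref{lem:ab} supplies the semi-axes of the Brocard inellipse, and base-tangency plus symmetry then single out a unique axis-aligned ellipse, but to be sure this ellipse is the Brocard inellipse I must know it is tangent to the two legs as well. Since the Brocard inellipse is by definition inscribed in $\T$, its tangency to $AB$ together with the correct semi-axes forces it to coincide with the ellipse I constructed; as an independent check one can verify tangency of the final conic to line $CA$ directly, which incidentally re-derives the two factorizations above.
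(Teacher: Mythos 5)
Your proof is correct and the arithmetic checks out: $|AB|=2d$, $|BC|=|CA|=\sqrt{\zeta}$, $\Delta=dh$, $\lambda=\zeta(9d^2+h^2)$, the stated $a^2,b^2$, the factorizations $9d^4-8d^2h^2-h^4=(9d^2+h^2)(d^2-h^2)$ and $9d^4+8d^2h^2-h^4=(9d^2-h^2)\zeta$, and the term-by-term match after rescaling by $-4h^2(9d^2+h^2)^2/d^2$ all verify. Your route is, however, different from the paper's. The paper pins down $\E$ via its definition as the inconic with perspector $X_6$ and center $X_{39}$: it is tangent to the sides at the feet of the symmedians, and the equation is extracted from those data. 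You instead import the semi-axis formulas underlying Lemma~\ref{lem:ab}, use the mirror symmetry of $\T$ to force an axis-aligned ellipse centered on the $y$-axis with horizontal major axis (consistent with the foci $\Omega_1,\Omega_2$ being swapped by the reflection), and fix the center by tangency to the base. This buys a shorter computation (no symmedian feet, no barycentrics for $X_{39}$) at the cost of leaning on the MathWorld axis formulas rather than the inconic definition. Your closing worry about tangency to the legs is unnecessary: a tangent line meets an ellipse in exactly one point, so by symmetry the contact point of the Brocard inellipse with $AB$ lies on the $y$-axis and is the lowest vertex; hence symmetry, the semi-axes, and base tangency already determine the Brocard inellipse uniquely, and the ellipse you construct \emph{is} it rather than a candidate needing verification. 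As a bonus, your values $a=d\sqrt{\zeta}/\sqrt{9d^2+h^2}$ and $b=4d^2h/(9d^2+h^2)$ are the dimensionally consistent ones, exposing typos in the $(a,b)$ displayed in Proposition~\ref{prop:orbita} while agreeing with its stated foci and with the conic of the present proposition.
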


\begin{proof}
The proof follows from $\T$, and isosceles 3-periodic. Recall that the Brocard inellipse is centered at $X_{39}$. Its perspector is $X_6$, i.e., it will be tangent to $\T$ where cevians through $X_6$ intersect it; see Figure~\ref{fig:broc-por-sec-tri}.
\end{proof}

\begin{proposition}
The semi-axes $(a,b)$ of $\E$ are given by:
 
 \[  (a,b)=\left(\frac{d\sqrt{\zeta}}{9d^2+h^2},\frac{4d^2}{9d^2+h^2}\right)
    \]

Furthermore, the Brocard points, located at the foci of $\E$ are given by:

\[ \Omega_1,\Omega_2=\left[ \pm\frac{d(3d^2- h^2)}{9d^2+h^2}, \frac{9d^4-h^4}{2h(9d^2+h^2)}\right] \]
 \label{prop:orbita}
\end{proposition}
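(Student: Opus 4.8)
The plan is to start from the explicit vertices of the isosceles $3$-periodic $\T$ given just before Proposition~\ref{prop:pair_brocard}, namely $A=[-d,(d^2-h^2)/(2h)]$, $B=[d,(d^2-h^2)/(2h)]$, $C=[0,\zeta/(2h)]$ with $\zeta=d^2+h^2$, and to read off $(a,b)$ and $\Omega_{1,2}$ directly from the conic $\E$ already produced in Proposition~\ref{prop:pair_brocard}. Since Proposition~\ref{prop:pair_brocard} hands us the full implicit equation of $\E$ in the form $-64d^2h^4x^2-4h^2(9d^2+h^2)\zeta y^2+4h(3d^2+h^2)(3d^2-h^2)\zeta y-(d^2-h^2)(9d^2-h^2)\zeta^2=0$, everything here is extraction of standard data from a given central conic, so no new geometric input is needed.

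First I would locate the center of $\E$ by completing the square in $y$ (there is no $xy$ term and no linear $x$ term, so the center sits on the axis $x=0$). The coefficient pattern shows the center is at $y_0=\tfrac{(3d^2+h^2)(3d^2-h^2)}{2h(9d^2+h^2)}=\tfrac{9d^4-h^4}{2h(9d^2+h^2)}$, which I expect to recognize as $X_{39}$, the Brocard midpoint; this is a useful sanity check against Lemma~\ref{lem:ab}. Next I would rewrite the conic in the translated coordinate $Y=y-y_0$, bringing it to the form $\alpha x^2+\beta Y^2=\gamma$, and then identify $a^2=\gamma/\alpha$ and $b^2=\gamma/\beta$. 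Carrying out this division should collapse the somewhat intimidating coefficients into the clean claimed values $a=\tfrac{d\sqrt{\zeta}}{9d^2+h^2}$ and $b=\tfrac{4d^2}{9d^2+h^2}$; I would verify that $a>b$ (equivalently $\zeta>16d^2$, i.e. $h^2>15d^2$) holds on the relevant parameter range so that $\E$ is genuinely a horizontally-oriented ellipse with foci on the $x$-axis direction of the translated frame.

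Having the semi-axes, the foci follow from $c^2=a^2-b^2$. I would compute $c^2=\tfrac{d^2\zeta-16d^4}{(9d^2+h^2)^2}=\tfrac{d^2(d^2+h^2-16d^2)}{(9d^2+h^2)^2}=\tfrac{d^2(h^2-15d^2)}{(9d^2+h^2)^2}$ and simplify to check that $c=\tfrac{d(3d^2-h^2)}{9d^2+h^2}$ up to sign, matching the $x$-offset in the claimed $\Omega_{1,2}=[\pm\tfrac{d(3d^2-h^2)}{9d^2+h^2},\tfrac{9d^4-h^4}{2h(9d^2+h^2)}]$. The foci then sit at $(\pm c,\,y_0)$, and since the $y$-coordinate is exactly the center ordinate $y_0$ already computed, the second coordinate of $\Omega_{1,2}$ is immediate. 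The identification of these foci with the Brocard points $\Omega_1,\Omega_2$ is not a fresh claim but a restatement of the standing fact (from the Brocard porism definition) that the Brocard points are the foci of the Brocard inellipse.

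The main obstacle is purely algebraic: confirming that $a^2-b^2$ and the factor $(3d^2-h^2)$ behave consistently in sign, because $3d^2-h^2$ can be negative on the range where $\E$ is a proper ellipse (that range forces $h^2>15d^2$, hence $3d^2-h^2<0$). I would therefore be careful that the squared quantities $c^2$ and the reported signed $x$-coordinate of the foci remain coherent, treating the sign ambiguity as absorbed by the $\pm$ in $\Omega_{1,2}$; this is bookkeeping rather than a genuine difficulty. Everything else reduces to completing the square and simplifying rational expressions, so I expect the proof to be a short verification once the center and the $a^2-b^2$ computation are in hand.
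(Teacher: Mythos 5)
Your overall route --- extract center, semi-axes, and foci from the implicit conic handed to you by Proposition~\ref{prop:pair_brocard} by completing the square --- is exactly the paper's (its proof is literally ``follows directly from Proposition~\ref{prop:pair_brocard} computing the semi-axes and foci''), and your center $y_0=\tfrac{9d^4-h^4}{2h(9d^2+h^2)}$ is right. But the execution has a genuine problem at the step you dismiss as ``bookkeeping.'' Actually carrying out the division does \emph{not} collapse onto the displayed $(a,b)$: writing the conic as $64d^2h^4x^2+4h^2(9d^2+h^2)\zeta\,Y^2=\gamma$ one finds $\gamma=\tfrac{64d^4h^4\zeta}{9d^2+h^2}$, hence
\[
a^2=\frac{d^2\zeta}{9d^2+h^2},\qquad b=\frac{4d^2h}{9d^2+h^2},
\]
i.e.\ the statement's $a$ is missing a square root in the denominator and its $b$ is missing a factor of $h$ (as printed, both are scale-invariant in $(d,h)$ and so cannot be lengths; the corrected values agree with Lemma~\ref{lem:ab} via Lemma~\ref{lem:reciprocal}).

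The downstream consequence is that your focal computation is not a sign ambiguity but an outright failure: from the printed $(a,b)$ you get $c^2=\tfrac{d^2(h^2-15d^2)}{(9d^2+h^2)^2}$, and $h^2-15d^2$ is a quadratic in $(d,h)$ that cannot equal the quartic $(3d^2-h^2)^2$ required to match the stated foci; no choice of sign rescues this. (Your auxiliary claim that the relevant range forces $h^2>15d^2$ is also false --- every $d,h>0$ gives a valid porism, e.g.\ near-equilateral has $h^2\approx 3d^2$.) With the corrected semi-axes the identity does hold cleanly: $a^2-b^2=\tfrac{d^2\left[(d^2+h^2)(9d^2+h^2)-16d^2h^2\right]}{(9d^2+h^2)^2}=\tfrac{d^2(3d^2-h^2)^2}{(9d^2+h^2)^2}$, which is exactly the stated focal offset. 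So your method is sound and is the intended one, but a faithful run of it should have flagged the misprint in $(a,b)$ rather than asserting the check goes through; as written, the proof of the foci from the semi-axes does not close.
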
   
    
    \begin{proof} Follows directly from Proposition \ref{prop:pair_brocard} computing the semi-axes and foci of the ellipse defined by $\E$.
    \end{proof}

\begin{lemma}

\[R= \frac{\zeta}{2h}, \;\;\; \Sha=  \frac{3d^2+h^2}{2dh} ,\;\;\; \sin\omega=\frac{2dh}{\sqrt{(9d^2+h^2)\zeta}}
\]

Or inversely:

\[
d=-{\frac { \left( -2\, \Sha+\sqrt {{ \Sha^2}-3} \right) R}{ \Sha^2
+1}},\;\; h=  \frac{( \Sha^2+\Sha\sqrt{ \Sha^2-3}+3) R}{   \Sha^2+1}
\]
\label{lem:reciprocal}
\end{lemma}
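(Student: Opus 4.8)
The plan is to establish the three forward identities by direct coordinate computation together with the cited Brocard relations, and then to invert the resulting $2\times2$ system for $d,h$. First I would read the side lengths and area straight off the vertices. Since $A$ and $B$ share the ordinate $(d^2-h^2)/(2h)$, we have $|AB|=2d$; the vertical gap from this level up to $C$ is $(\zeta-(d^2-h^2))/(2h)=h$, so $|AC|^2=|BC|^2=d^2+h^2=\zeta$ and the area is $\Delta=\tfrac12(2d)h=dh$. The value $R=\zeta/(2h)$ is already recorded in Proposition~\ref{prop:pair_brocard} (equivalently $R=|C|$, with $|A|=|B|=|C|$ checked directly), so I would simply cite it.

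Next I would feed these quantities into the Brocard relations used in the proof of Lemma~\ref{lem:ab}. With $\lambda=\sum_{i<j}(s_is_j)^2 = 4d^2\zeta+\zeta^2+4d^2\zeta=\zeta(9d^2+h^2)$, the identity $\sin\omega=2\Delta/\sqrt{\lambda}$ gives $\sin\omega=2dh/\sqrt{(9d^2+h^2)\zeta}$. For $\Sha=\cot\omega$ I would compute $\cos\omega=\sqrt{1-\sin^2\omega}$; the key simplification is
\[ (9d^2+h^2)\zeta-4d^2h^2 = 9d^4+6d^2h^2+h^4=(3d^2+h^2)^2, \]
so $\cos\omega=(3d^2+h^2)/\sqrt{(9d^2+h^2)\zeta}$ and hence $\Sha=(3d^2+h^2)/(2dh)$. (Alternatively one may invoke $\cot\omega=(s_1^2+s_2^2+s_3^2)/(4\Delta)$ directly, with $s_1^2+s_2^2+s_3^2=4d^2+2\zeta=6d^2+2h^2$.)

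For the inverse direction I would treat $2Rh=d^2+h^2$ and $2\Sha dh=3d^2+h^2$ as a system. Subtracting $3$ times the first from the second eliminates the quadratic part and yields the linear relation $\Sha d=3R-h$, i.e.\ $d=(3R-h)/\Sha$. Substituting into $d^2=2Rh-h^2$ gives
\[ (\Sha^2+1)\,h^2-2R(\Sha^2+3)\,h+9R^2=0, \]
whose discriminant is $4R^2\big[(\Sha^2+3)^2-9(\Sha^2+1)\big]=4R^2\,\Sha^2(\Sha^2-3)$. By Remark~\ref{rem:sha-min} this is nonnegative, so $h=R(\Sha^2+3\pm\Sha\sqrt{\Sha^2-3})/(\Sha^2+1)$, and back-substitution gives $d=R(2\Sha\mp\sqrt{\Sha^2-3})/(\Sha^2+1)$.

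The one genuine subtlety is root selection. The quadratic has two admissible positive solutions, and they correspond to two distinct isosceles $3$-periodics living in the same porism; the claimed formulas are the $+$ branch for $h$ (equivalently the stated signed expression for $d$). I would pin this down by noting that the forward map is an identity, so substituting the claimed closed forms back into $R=\zeta/(2h)$ and $\Sha=(3d^2+h^2)/(2dh)$ must reproduce $(R,\Sha)$; this singles out $h=R(\Sha^2+3+\Sha\sqrt{\Sha^2-3})/(\Sha^2+1)$ and $d=R(2\Sha-\sqrt{\Sha^2-3})/(\Sha^2+1)$, matching the statement (and $2\Sha>\sqrt{\Sha^2-3}$ guarantees $d>0$). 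The main obstacle is therefore not the algebra but justifying the branch choice; the cleanest justification is consistency with the orientation fixed by the explicit vertex labels of $\T$ in this appendix.
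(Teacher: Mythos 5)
Your computation follows the same route the paper intends --- the paper's own proof is just the one-line citation ``Follows from Propositions~\ref{prop:pair_brocard} and \ref{prop:orbita}'' --- and your algebra is correct throughout: the side lengths $s_1=s_2=\sqrt{\zeta}$, $s_3=2d$, the area $\Delta=dh$, the value $\lambda=\zeta(9d^2+h^2)$, the factorization $(9d^2+h^2)\zeta-4d^2h^2=(3d^2+h^2)^2$, and the quadratic $(\Sha^2+1)h^2-2R(\Sha^2+3)h+9R^2=0$ with discriminant $4R^2\Sha^2(\Sha^2-3)$ all check out and reproduce the stated formulas.

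The one flaw is in your handling of the branch choice. You claim that substituting the closed forms back into $R=\zeta/(2h)$ and $\Sha=(3d^2+h^2)/(2dh)$ ``singles out'' the $+$ branch; it does not, since \emph{both} roots of the quadratic are genuine solutions of that system and both back-substitute correctly. The two branches correspond to two distinct isosceles triangles sharing the apex $[0,R]$, the circumradius and the Brocard angle, but whose Brocard inellipses are reflections of one another across the horizontal axis (equivalently, $3d^2-h^2$ changes sign between them, flipping the Brocard points of Proposition~\ref{prop:orbita} above or below $X_3$). So the branch is fixed not by the forward identities but by the paper's orientation convention --- e.g.\ that $X_6$ and the Brocard points have negative ordinate, as in Lemma~\ref{lem:x6} and Proposition~\ref{prop:brocs12}. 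Your closing sentence gestures at the right resolution, but you should replace the back-substitution argument with an explicit check that the chosen $(d,h)$ places $X_{39}$ (or $\Omega_{1,2}$) below the origin, consistent with the rest of the paper.
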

\begin{proof}
Follows from Propositions \ref{prop:pair_brocard} and \ref{prop:orbita}.
\end{proof}

\section{Brocard Porism Vertices}
\label{app:app_vertices}
Let $d,h$ be the half base and height of $\T$, respectively. Let $\zeta=d^2+h^2$.

\begin{proposition}
Parametrics for the 3-periodic vertices in terms of $t$ are given by:

 {\small
 \begin{equation}
     \aligned
     A=& \left[ \frac{\zeta}{2h}\cos t, \frac{\zeta}{2h}\sin{t}\right] \\
     B=&[b_x,b_y]\\
     b_x=&
       -\frac{\zeta  d  \left(2 dh \cos{t}   +(3  d^2+h^2)\sin{t}-3  d^2+h^2\right)}{2  d  h  (3  d^2-h^2) \cos{t}-(9d^4-h^4)\sin{t}+ 9d^4+2d^2h^2+h^4}\\
     b_y=& \frac{\zeta    \left(  2 dh(3  d^2+h^2) \cos{t}      - (9  d^4-2  d^2  h^2+h^4)\sin{t} +9  d^4-h^4 \right)}{2  d  h  (3  d^2-h^2)  \cos{t}-(9d^4-h^4) \sin{t}+ 9d^4+2d^2h^2+h^4}\\
     C=&[c_x,c_y]\\
     c_x=&  -\frac{\zeta d\left(-2dh\cos{t} +(3 d^2+h^2)\sin{t}-3d^2+h^2\right)}{(9d^4-h^4)\sin{t}+2dh(3d^2- h^2)\cos{t}-9 d^4-2 d^2 h^2-h^4}\\
     c_y=&\frac{\zeta   (2    d   h   (3   d^2+h^2) \cos{t} +  (9   d^4-2   d^2   h^2+h^4)\sin{t} -9   d^4+h^4)}{
      2   h   (2   d   h   (3   d^2-h^2)   \cos t+ (9   d^4-h^4) \sin {t}-9   d^4-2   d^2   h^2-h^4) } 
     \endaligned
 \end{equation}
 }
 \end{proposition}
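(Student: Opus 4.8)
The plan is to treat $t$ as the angular coordinate of the first vertex on the circumcircle and to recover the remaining two vertices from the Poncelet tangency condition. First I would observe that $A=\frac{\zeta}{2h}[\cos t,\sin t]$ lies on $\K_0:\,x^2+y^2-R^2=0$ with $R=\zeta/(2h)$ by Proposition~\ref{prop:pair_brocard}; thus the stated $A$ is simply the generic point of the circumcircle, and $t$ is a legitimate parameter sweeping the porism.

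Since every $3$-periodic of the porism is circumscribed about $\E$, the two sides issuing from $A$ are exactly the two tangents drawn from the external point $A$ to the Brocard inellipse. Writing $S(x,y)=0$ for the explicit quadratic giving $\E$ in Proposition~\ref{prop:pair_brocard}, I would form the pair of tangents through $A$ via the polar identity $S\,S(A)=T(\cdot;A)^2$, where $T(\cdot;A)$ is the polar line of $A$ with respect to $\E$. This degenerate conic factors into two linear forms, the two tangent lines, whose coefficients are polynomials in $d,h,\cos t,\sin t$. Intersecting each tangent line with $\K_0$ and discarding the root $A$ itself yields the second intersection; these are $B=[b_x,b_y]$ and $C=[c_x,c_y]$, and the line--circle resultants are precisely the denominators displayed in the statement (for instance $2dh(3d^2-h^2)\cos t-(9d^4-h^4)\sin t+9d^4+2d^2h^2+h^4$).

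It remains to confirm that the triangle closes, i.e.\ that the third side $BC$ is also tangent to $\E$. This is immediate from Poncelet's closure theorem once a single closed orbit of the interscribed pair $(\K_0,\E)$ is exhibited: the isosceles $3$-periodic $\T$ of Proposition~\ref{prop:pair_brocard} is such an orbit, and indeed specializing the formulas to $t=\pi/2$ returns the apex $A=[0,\zeta/(2h)]$ of $\T$ with $B,C$ landing on its base vertices. Hence the parametrized $ABC$ is genuinely interscribed for all $t$. Alternatively, one may verify the claim directly by substituting $B,C$ into $x^2+y^2-R^2$ and checking that line $BC$ meets $\E$ with vanishing discriminant.

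The main obstacle is entirely computational: the pair-of-tangents factorization followed by two line--circle eliminations is heavy in the four quantities $d,h,\cos t,\sin t$, and matching the output to the stated normal form requires care in selecting the correct (second) intersection on each tangent, i.e.\ the right branch. Two devices keep this under control. First, homogeneity: each coordinate must be degree one in $(d,h)$, which pins down admissible numerator/denominator pairings and catches bookkeeping slips. Second, the reflection symmetry of the isosceles base configuration across the $y$-axis maps the porism to itself and exchanges $B$ and $C$ under the reparametrization $t\mapsto\pi-t$, the relation $c_x(t)=-b_x(\pi-t)$, $c_y(t)=b_y(\pi-t)$ visible in the statement; consequently only one of the two vertices need be computed from scratch.
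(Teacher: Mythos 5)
Your proposal matches the paper's own proof: the authors likewise take $A=R(\cos t,\sin t)$ on $\K_0$, draw the two tangent lines from $A$ to the inellipse $\E$ of Proposition~\ref{prop:pair_brocard}, and obtain $B$ and $C$ as the second intersections of those tangents with $\K_0$, relegating the algebra to a CAS. Your added remarks on the pair-of-tangents factorization, the Poncelet closure check, and the $t\mapsto\pi-t$ symmetry are sound elaborations of the same route rather than a different argument.
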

 
 \begin{proof} Given a point $A=R(\cos t,\sin t)\in\K_0$ consider the tangent lines to the Brocard inellipse $\E$ obtained in Proposition \ref{prop:pair_brocard}. The two other intersection points of these lines with $\K_0$ define the vertices $B$ and $C$.
 A long and straightforward calculation  with   help of a CAS leads to the result stated.
 \end{proof}

 

\section{Further Relations}
\label{app:app_further}
Expressions for $X_k$, $k=6,15,16,39,574$ were given above. They were obtained by starting with the explicit calculation of the triangular center $X_i(t)$ over the family of 3-periodic orbits in the Brocard porism given in Proposition \ref{prop:orbita}.

To this end we make use of the trilinear coordinates $f(a,b,c)::$ of  the triangular center $X_i$ \cite{etc} and its conversion to Euclidean coordinates expressed by
  \[X_i = \frac{ s_1 f(s_1,s_2,s_3) A +s_2f(s_2,s_3,s_1)B+ s_3 f(s_3,s_1,s_2) C}
  {s_1 f(s_1,s_2,s_3)+s_2f(s_2,s_3,s_1)+s_3 f(s_3,s_1,s_2)}. \]
  Here $s_1=|B-C|,\; s_2=|C-A|$ and $  s_3=|A-B|$. Obtain the result by simplification via a CA. It is worth mentioning that all triangular centers considered  are rational functions of $(d,h)$. Therefore, by Lemma \ref{lem:reciprocal} they will be rational functions of $(\Sha, \sqrt{\Sha^2-3}).$
 
 \begin{lemma}
 The distance between circumcenter $X_3$ (respec. $X'_3$) and symmedian $X_6$ (respec. $X'_6$) of $T$ (respec. $T'$) is given by

\[ |X_3-X_6|=\frac{R\sqrt{ \Sha^2 -3}}{ \Sha},\;\;\;\;\;
|X_3'-X_6'|=\frac{R(\Sha^2-3)^{\frac{3}{2}}}{2\Sha(\Sha^2+3)}
\]
 \label{lem:dX3X6}
\end{lemma}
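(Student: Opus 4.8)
The plan is to exploit the fact that, with $X_3$ placed at the origin, all four centers entering the two distances lie on the $y$-axis (the Brocard axis), so that each distance collapses to the absolute value of a difference of $y$-coordinates and no genuine two-dimensional computation is needed. The only inputs required are coordinate expressions already established: Lemma~\ref{lem:x6} for $X_6$, Lemma~\ref{lem:x3} for $X_3'=X_{182}$, and Lemma~\ref{lem:x574} for $X_6'=X_{574}$, together with the sign information $\Sha\geq\sqrt{3}$ of Remark~\ref{rem:sha-min}.

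For the first identity I would simply recall that $X_3=(0,0)$ and that Lemma~\ref{lem:x6} gives $X_6=[0,-R\sqrt{\Sha^2-3}/\Sha]$; the magnitude of this vector is $R\sqrt{\Sha^2-3}/\Sha$, which is the claimed value (this merely reproduces the corollary stated just after Lemma~\ref{lem:x6}).

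For the second identity I would subtract the $y$-coordinate of $X_6'$ (Lemma~\ref{lem:x574}) from that of $X_3'$ (Lemma~\ref{lem:x3}), factoring out the common scalar $R\sqrt{\Sha^2-3}$:
\[
|X_3'-X_6'|=R\sqrt{\Sha^2-3}\,\left|\frac{\Sha}{\Sha^2+3}-\frac{1}{2\Sha}\right|.
\]
Placing the inner difference over the common denominator $2\Sha(\Sha^2+3)$ produces the numerator $2\Sha^2-(\Sha^2+3)=\Sha^2-3$, so the expression becomes $R\sqrt{\Sha^2-3}\cdot(\Sha^2-3)/[2\Sha(\Sha^2+3)]$. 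Since $\Sha\geq\sqrt{3}$ by Remark~\ref{rem:sha-min}, the bracketed quantity is nonnegative, the absolute value can be dropped, and combining $\sqrt{\Sha^2-3}\cdot(\Sha^2-3)=(\Sha^2-3)^{3/2}$ yields exactly $R(\Sha^2-3)^{3/2}/[2\Sha(\Sha^2+3)]$, as claimed.

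There is essentially no obstacle in this lemma itself: the result is a one-line algebraic consequence of the coordinate lemmas already proved, and the only point needing care is confirming the sign of the difference of $y$-coordinates so that the absolute value is harmless, which $\Sha\geq\sqrt{3}$ settles. The substantive content lives upstream, in the derivations of the $X_6$, $X_{182}$, and $X_{574}$ coordinates obtained via the trilinear-to-Cartesian conversion sketched in this appendix, on which the present lemma merely draws.
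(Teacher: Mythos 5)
Your proposal is correct and follows essentially the same route as the paper, which likewise reduces the claim to the coordinate expressions of Lemmas~\ref{lem:x3} and \ref{lem:x574} (and Lemma~\ref{lem:x6} for the first distance); you merely spell out the one-line algebra that the paper leaves implicit. The sign check via Remark~\ref{rem:sha-min} is a sensible addition but not a substantive departure.
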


\begin{proof} Consider any triangle with circumradius $R$ and Brocard angle $\omega$. By the construction   of the Brocard porism the family   has fixed triangular centers $X_3$,   $X_6$, $X_{182}=\frac{1}{2}(X_3+X_6)$ and $X_{574}$. As $X_3'=X_{182}$ and $X_6'=X_{574}$ the result follows from Lemmas \ref{lem:x3} and \ref{lem:x574}.
\end{proof}

\subsection*{Convergence}

 Recall that given a map $f:U\to U$, $ U\ne \emptyset$, the positive orbit of point $p_0$
 is the set $O_{+}(p_0)=\{p_0,f(p_0), f(f(p_0)),\ldots, f^n(p_0),\ldots\}.$
 Analogously, when $f$ is invertible, the negative orbit
 is defined by
 
 \[ O_{-}(p_0)=\{p_0,f^{-1}(p_0), f^{-1}(f^{-1}(p_0)),\ldots, f^{-n}(p_0),\ldots\}.\]
 
 The future (resp. past) of an orbit is the closure of 
 the positive (resp. negative) orbit and is  denoted by
 the $\omega$-limit set $\omega(p_0)$ (resp. $\alpha$-limit set $\alpha(p_0)$). For an introduction to more properties of these concepts see \cite[Chapter 1]{devaney}.

\begin{proposition}[Convergence]
Let \[ f(R,\Sha)=  (p(R,\Sha),q(\Sha))=\left(  \frac{R\sqrt{\Sha^2-3}}{2\Sha},\;\;\;\;
  \frac{\Sha^2+3}{2\Sha}\right) \]
  
  For any $p_0=[R_0,\Sha_0]$ with $R_0>0$ and $\Sha_0>\sqrt{3} $,  $\omega(p_0)$ is equal to
  $[0,\sqrt{3}]$ and $\alpha(p_0)=[\infty,\infty].$
 \label{prop:conjunto_limite}
\end{proposition}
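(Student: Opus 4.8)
The plan is to exploit the fact that the second coordinate of $f$ evolves autonomously, so that the dynamics of $\Sha$ can be analyzed first and the behaviour of $R$ read off afterwards. Writing $\Sha_{n+1}=q(\Sha_n)=\frac{1}{2}\left(\Sha_n+\frac{3}{\Sha_n}\right)$, one recognizes the Babylonian (Newton) iteration for $\sqrt{3}$. First I would locate the fixed points: solving $q(\Sha)=\Sha$ forces $\Sha^2=3$, so $\sqrt3$ is the only fixed point and it sits on the boundary of the admissible set $\Sha>\sqrt3$. Two elementary inequalities then control the forward orbit: by AM--GM, $q(\Sha)\geq\sqrt3$ with equality only at $\Sha=\sqrt3$, and $q(\Sha)-\Sha=\frac{3-\Sha^2}{2\Sha}<0$ for $\Sha>\sqrt3$. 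Hence $(\Sha_n)$ is strictly decreasing and bounded below by $\sqrt3$, so it converges, and the limit must be the fixed point $\sqrt3$.

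For the first coordinate I would write $R_{n+1}=\mu_n R_n$ with multiplier $\mu_n=\frac{\sqrt{\Sha_n^2-3}}{2\Sha_n}$. The key observation is that this multiplier is bounded away from $1$ uniformly: $\sqrt{\Sha^2-3}<\Sha$ gives $\mu_n<\tfrac12$ for every $n$. Consequently $0<R_n\leq 2^{-n}R_0\to 0$. Combining the two coordinates, $f^n(p_0)\to[0,\sqrt3]$, which is exactly the claim $\omega(p_0)=[0,\sqrt3]$.

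For the $\alpha$-limit I would invert the map on the domain $\{R>0,\ \Sha>\sqrt3\}$. Since $q'(\Sha)=\frac12(1-3/\Sha^2)>0$ on $(\sqrt3,\infty)$, $q$ is strictly increasing there with $q(\Sha)\to\sqrt3$ as $\Sha\to\sqrt3^+$ and $q(\Sha)\to\infty$ as $\Sha\to\infty$, hence a bijection of $(\sqrt3,\infty)$; solving $\Sha^2-2\Sha'\Sha+3=0$ and keeping the root exceeding $\sqrt3$ yields $q^{-1}(\Sha')=\Sha'+\sqrt{\Sha'^2-3}$. Thus the backward sequence satisfies $\Sha_{-n-1}=\Sha_{-n}+\sqrt{\Sha_{-n}^2-3}$, which is strictly increasing; were it bounded, its limit $L$ would satisfy $\sqrt{L^2-3}=0$, i.e.\ $L=\sqrt3$, contradicting monotone growth from above, so $\Sha_{-n}\to\infty$. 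The corresponding $R$-recurrence reads $R_{-n-1}=R_{-n}/\mu_{-n-1}$ with $1/\mu>2$, forcing $R_{-n}\geq 2^n R_0\to\infty$. Hence $\alpha(p_0)=[\infty,\infty]$.

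The computations are short; the only point requiring care is to confirm that the forward orbit never leaves $(\sqrt3,\infty)$ (so that the radicals and the positivity of $R_n$ are preserved) and that the uniform bounds $\mu<\tfrac12$ and $1/\mu>2$ hold throughout. These are precisely what upgrade the weaker statement ``the multiplier tends to a limit'' into genuine geometric convergence (resp.\ divergence), rather than a merely asymptotic claim. The remaining monotonicity and fixed-point facts are standard for the Newton square-root map.
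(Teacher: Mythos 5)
Your proof is correct and follows essentially the same route as the paper: analyze the autonomous second coordinate $q$ first, bound the multiplier $\sqrt{\Sha^2-3}/(2\Sha)<1/2$ to force $R_n\to 0$, and invert the map for the $\alpha$-limit. Where the paper invokes $q'(\sqrt{3})=0$ plus a ``graphic analysis'' to assert global attraction, you substitute the AM--GM/monotone-convergence argument for the Newton iteration of $\sqrt{3}$ together with the explicit geometric bounds $R_n\le 2^{-n}R_0$ and $R_{-n}\ge 2^{n}R_0$, which makes the argument self-contained (and your formula $q^{-1}(\Sha')=\Sha'+\sqrt{\Sha'^2-3}$ corrects an apparent typo in the paper's printed expression for $f^{-1}$).
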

\begin{proof}
Define $U=\{(R,\Sha) : \Sha\geq \sqrt{3}\}$. We have that $U$ is invariant by $f$, i.e., $f(U)\subset U.$
We observe that for $\Sha>0$,    $q$ has a unique fixed point $(0,\sqrt{3})$.  
 
For any $\Sha_0\geq \sqrt{3}$, $\omega(\Sha_0)=\sqrt{3}$ for $q$ since $q^\prime(\sqrt{3})=0<1$ (attractor). In fact a global attractor. As $\frac{\sqrt{\Sha^2-3}}{2\Sha}<\frac{1}{2}$    for $\Sha>\sqrt{3}$ and $p(x)< \frac{Rx}{2}$ it follows by a graphic analysis that  $\omega( p_0)=[0,\sqrt{3}]$.
 The inverse of $f$ is given by
 \[ f^{-1}(R,\Sha)=\left(2\Sha+\sqrt{\Sha^2-3}, {\frac {\sqrt {2}R\,\sqrt {y+\sqrt {\Sha^{2}-3}}}{\sqrt [4]{\Sha^{2}-3}}}\right)\]
A similar analysis shows that $[0,\sqrt{3}] $ is a repeller of $f^{-1}$ and that the positive orbit of $p_0$ goes to infinity.
 \end{proof}

\section{Table of Symbols}
\label{app:app_symbols}
\begin{table}[H]
\small
\begin{tabular}{|c|l|l|}
\hline
symbol & meaning & note \\
\hline
$T,T'$ & 3-periodic and its 2nd Brocard triangle & \\
$s_1,s_2,s_3$ & sidelengths of $T$ & \\
$\Gamma,R$ & circumcircle and circumradius of $T$ & \\
$\E,a,b$ & Brocard inellipse and semi-axes & centered on $X_{39}$\\
$\B$ & Brocard porism with $\E$ and $\Gamma$ & \\
$\Omega_1,\Omega_2$ & 1st and 2nd Brocard points & on foci of $\E$ \\
$\omega,\Sha$ & Brocard angle and its cotangent & $\Sha=\cot\omega$\\
$\K$ & Brocard circle & centered on $X_{182}$ \\
$\K_2$ & second Brocard circle  & centered on $X_3$ \\
$\T$ & \makecell[lt]{isosceles triangle inscribed in $\Gamma$ \\ whose base is tangent to $\E$ at $(0,b)$} & \\ 
$d,h$ & half-base and height of $\T$ & \\
$\zeta$ & constant $d^2+h^2$ & \\
$P_2,U_2$ & 1st and 2nd Beltrami points & $\Gamma$-inverses of $\Omega_1,\Omega_2$ \\
$\Cm_1,\Cm_2$ & Beltrami Circles &\makecell[lt]{centered on $P_2,U_2$,\\ contain $X_{15}$ and $X_{16}$} \\
\hline
$X_{3}$ & circumcenter (at origin) & $X_3=[0,0]$\\
$X_{6}$ & symmedian point & \\
$X_{13},X_{14}$ & isogonic points & \\
$X_{15},X_{16}$ & isodynamic points & \\
$X_{39}$ & Brocard midpoint & \\
$X_{182}$ & center of $\K$ & midpoint of $X_3 X_6$ \\
$X_{187}$ & Midpoint of Beltrami points & \\
$X_{574}$ & $X_6$ of $T'$ & $\K$-inverse of $X_{187}$\\
$X_{39498}$ & $X_{182}$ of $T'$ & \\
\hline
\end{tabular}
\caption{Symbols used. Primed symbols in the text refer to the second Brocard triangle.}
\label{tab:symbols}
\end{table}

\bibliographystyle{maa}
\bibliography{references,authors_rgk}

\end{document}